\newtheorem{theorem}{Theorem}[section]
\newtheorem{lemma}[theorem]{Lemma}
\newtheorem{proposition}[theorem]{Proposition}
\newtheorem{corollary}[theorem]{Corollary}
\newtheorem{conjecture}[theorem]{Conjecture}
\theoremstyle{definition}
\newenvironment{definition}[1][Definition]{\begin{trivlist}
\item[\hskip \labelsep {\bfseries #1}]}{\end{trivlist}}
\newenvironment{remark}[1][Remark]{\begin{trivlist}
\item[\hskip \labelsep {\bfseries #1}]}{\end{trivlist}}
\newcommand{\ilim}[1]{\displaystyle{\lim_{\genfrac{}{}{0pt}{}{\longleftarrow}{\scriptstyle #1}} }\;}
\newcommand{\ord}{{\operatorname{ord}}}
\newcommand{\Gal}{{\operatorname{Gal}}}
\newcommand{\Ram}{{\operatorname{Ram}}}
\newcommand{\Res}{{\operatorname{Res}}}
\newcommand{\rec}{{\operatorname{rec}}}
\newcommand{\nrd}{{\operatorname{nrd}}}
\newcommand{\tors}{{\operatorname{tors}}}
\newcommand{\Stab}{{\operatorname{Stab}}}
\newcommand{\End}{{\operatorname{End}}}
\newcommand{\Hom}{{\operatorname{Hom}}}
\newcommand{\Sel}{{\operatorname{Sel}}}
\newcommand{\GL}{{\operatorname{GL_2}}}
\newcommand{\PGL}{{\operatorname{PGL_2}}}
\newcommand{\M}{{\operatorname{M_2}}}
\newcommand{\JL}{{\operatorname{JL}}}
\newcommand{\UU}{\mathcal{U}}
\newcommand{\HH}{\mathcal{H}}
\author{Jeanine Van Order}
\address{Section de Math\'ematiques\\ Ecole Polytechnique F\'ed\'erale de Lausanne\\ Lausanne 1015, Switzerland}
\email{jeanine.vanorder@epfl.ch}
\thanks{The author acknowledges support from the Swiss National Science Foundation (FNS) grant 200021-125291.}
\keywords{Iwasawa theory, Hilbert modular forms, abelian varieties}
\subjclass{Primary 11, Secondary 11F41, 11G40, 11R23}
\begin{document}

\title[Quaternionic $p$-adic $L$-functions of Hilbert modular eigenforms]
{On the quaternionic $p$-adic $L$-functions associated to Hilbert modular eigenforms}

\begin{abstract}  We construct $p$-adic $L$-functions associated to 
cuspidal Hilbert modular eigenforms of parallel weight two in certain
dihedral or anticyclotomic extensions via the Jacquet-Langlands correspondence, 
generalizing works of Bertolini-Darmon, Vatsal and others. The construction
given here is adelic, which allows us to deduce a precise interpolation formula from
a Waldspurger type theorem, as well as a formula for the dihedral 
$\mu$-invariant. We also make a note of Howard's nonvanishing criterion 
for these $p$-adic $L$-functions, which can be used to reduce the associated Iwasawa main 
conjecture to a certain nontriviality criterion for families of $p$-adic $L$-functions.\end{abstract}

\maketitle
\tableofcontents

\section{Introduction}

Let $F$ be a totally real number field of degree $d$ over ${\bf{Q}}$. 
Fix a prime ideal $\mathfrak{p} \subset \mathcal{O}_F$ with underlying
rational prime $p$.  Fix an integral ideal $\mathfrak{N}_0 
\subset \mathcal{O}_F$ with $\ord_{\mathfrak{p}}(\mathfrak{N}_0)\leq 1$. Let 

\begin{align}\label{N}
\mathfrak{N} &= \begin{cases}
\mathfrak{N}_0 &\text{ if $\mathfrak{p} \mid \mathfrak{N}_0$}\\
\mathfrak{p}\mathfrak{N}_0 &\text{ if $\mathfrak{p} \nmid \mathfrak{N}_0$.}
\end{cases}
\end{align} Hence, $\ord_{\mathfrak{p}}(\mathfrak{N})=1$.
Fix a totally imaginary quadratic extension $K$ of $F$. Assume that
the relative discriminant $\mathfrak{D}_{K/F}$ of $K$ over $F$ is 
prime to $\mathfrak{N}/\mathfrak{p}$. The choice of $K$ then determines
uniquely a factorization \begin{align}\label{factorization}\mathfrak{N} 
= \mathfrak{p} \mathfrak{N}^{+} \mathfrak{N}^{-}\end{align} 
of $\mathfrak{N}$ in $\mathcal{O}_F$, with $v
\mid \mathfrak{N}^{+}$ if and only if $v$ splits in $K$, and $v \mid
\mathfrak{N}^{-}$ if and only if $v$ is inert in $K$. Let us assume
additionally that $\mathfrak{N}^{-}$ is the squarefree product
of a number of primes congruent to $d \mod 2$. 
Fix a Hilbert modular eigenform ${\bf{f}} \in \mathcal{S}_2(\mathfrak{N})$ of parallel weight 
two, level $\mathfrak{N}$, and trivial Nebentypus. Assume that ${\bf{f}}$ is either a
newform, or else arises from a newform of level $\mathfrak{N}/\mathfrak{p}$
via the process of $\mathfrak{p}$-stabilization. Our hypotheses on $\mathfrak{N}$ and 
$K$ imply that the global root number of the associated Rankin-Selberg $L$-function 
$L({\bf{f}} , K, s)$ at its central value $s =1$ is equal to $1$ (as opposed to $-1$). Moreover, 
the Jacquet-Langlands correspondence allows us to associated to ${\bf{f}} $ an eigenform 
on a totally definite quaternion algebra over $F$, which puts us in the setting of Waldspurger's 
theorem \cite{Wa}, as refined by Yuan-Zhang-Zhang in \cite{YZ^2}. Let us view ${\bf{f}} $ as a 
$p$-adic modular form via a fixed embedding $\overline{{\bf{Q}}} \rightarrow  
\overline{{\bf{Q}}}_p,$ writing $\mathcal{O}$ to denote the ring of integers of a 
finite extension of ${\bf{Q}}_p$ containing all of the Fourier coefficients of 
${\bf{f}}$ under this fixed embedding. Let us assume 
additionally that ${\bf{f}} $ is either $\mathfrak{p}$-ordinary, by which we mean that its eigenvalue 
at the Hecke operator $T_{\mathfrak{p}}$ is invertible in $\mathcal{O}$, or else that 
${\bf{f}} $ is $\mathfrak{p}$-supersingular, by which we main that its eigenvalue at the Hecke 
operator $T_{\mathfrak{p}}$ is zero. In the case where ${\bf{f}}$ is $\mathfrak{p}$-ordinary, 
let us write $\alpha_{\mathfrak{p}} = \alpha_{\mathfrak{p}}({\bf{f}} )$ to denote the unit root 
of the Hecke polynomial $X^2 - a_{\mathfrak{p}}({\bf{f}} )X + q.$ Here, $a_{\mathfrak{p}}({\bf{f}})$ 
denotes the eigenvalue of ${\bf{f}}$ at $T_{\mathfrak{p}}$, and $q$ denotes the cardinality of the 
residue field of $\mathfrak{p}$. Let $\delta = [F_{\mathfrak{p}}: {\bf{Q}}_p]$. We consider 
the behaviour of ${\bf{f}}$ in the dihedral ${\bf{Z}}_p^{\delta}$-extension 
$K_{\mathfrak{p}^{\infty}}$ of $K$ described by class field  theory. Writing the 
Galois group $\Gal(K_{\mathfrak{p}^{\infty}}/K) \cong {\bf{Z}}_p^{\delta}$
as \begin{align*} G_{\mathfrak{p}^{\infty}} = \ilim n G_{\mathfrak{p}^n}\end{align*} 
we consider the $\mathcal{O}$-Iwasawa algebra 
\begin{align*} \Lambda = \mathcal{O} [[G_{\mathfrak{p}^{\infty}}]] = \ilim n 
\mathcal{O}[G_{\mathfrak{p}^{n}}],\end{align*} whose elements can be viewed as $\mathcal{O}$-valued 
measures on $G_{\mathfrak{p}^{\infty}}$. The purpose of this note is to give a construction of 
elements $\mathcal{L}_{\mathfrak{p}}({\bf{f}}, K) \in \Lambda$ whose specializations to 
finite order characters $\rho$ of $G_{\mathfrak{p}^{\infty}}$ interpolate the central
values $L({\bf{f}}, \rho, 1)$ of the twisted Rankin-Selbert $L$-functions $L({\bf{f}}, \rho, s)$. To be 
more precise, if $\rho$ is a finite order character of $G_{\mathfrak{p}^{\infty}}$, and $\lambda$ an
element of $\Lambda$ with associated measure $d\lambda$, let 
\begin{align*} \rho \left( \lambda \right) = \int_{G_{\mathfrak{p}^{\infty}}} \rho(\sigma) 
\cdot d\lambda(\sigma)\end{align*} denote the specialization of $\lambda$ to
$\rho$. Let $\mathfrak{P}$ denote the maximal ideal of $\mathcal{O}$. We define the 
$\mu$-invariant $\mu = \mu(\lambda)$ associated to an element $\lambda \in \Lambda$ to be 
the largest exponent $c$ such that $\lambda \in \mathfrak{P}^c \Lambda$. Let $\pi = \pi_{\bf{f}}$ 
denote the cuspidal automorphic representation of $\GL$ over $F$ associated to ${\bf{f}}$, with 
\begin{align*} L(\pi, \operatorname{ad}, s) = \prod_v L(\pi_v, \operatorname{ad}, s)\end{align*} 
the $L$-series of the adjoint representation of $\pi$. Let \begin{align*}\zeta_F(s) 
= \prod_v \zeta_v(s)\end{align*} denote the Dedekind zeta function of $F$. Let \begin{align*}
L(\pi, \rho, s) = \prod_v L(\pi_v, \rho_v, s)\end{align*} denote the Rankin-Selberg $L$-function 
of $\pi$ times the theta series associated to $\rho$, with central value at $s=1/2$. 
Note that we have an equality of $L$-functions \begin{align*} L(\pi, \rho, s- \frac{1}{2}) 
&= \Gamma_{\bf{C}}(s)^{[K:{\bf{Q}}]}L({\bf{f}}, \rho, s), \end{align*} with 
\begin{align*} \Gamma_{\bf{C}}(s)= 2(2\pi)^{-s}\Gamma(s),\end{align*}
as explained for instance in \cite[0.5]{Ne}. Let $\pi = \JL(\pi')$ denote the 
Jacquet-Langlands correspondence of $\pi$. Choose a decomposable vector 
$\Phi = \otimes_v \Phi_v \in \pi'$. Let $\omega = \omega_{K/F}$ denote the quadratic
Hecke character associated to $K/F$, with decomposition $\omega = \otimes_v \omega_v $.
Following Yuan-Zhang-Zhang \cite{YZ^2}, we consider for any prime $v$ of $F$ the local 
linear functional defined by \begin{align*}\alpha(\Phi_v, \rho_v) = \frac{ L(\omega_v, 1) \cdot L(\pi_v,
\operatorname{ad}, 1) }{\zeta_v(2) \cdot L(\pi_v, \rho_v, 1/2)} \cdot 
\int_{K_{v}^{\times}/F_{v}^{\times}} \langle \pi_v'(t)\Phi_v,
\Phi_v\rangle_v \cdot \rho_v(t) d t. \end{align*} Here, $\langle \cdot, \cdot \rangle_v$ 
denotes a nontrivial hermitian form on the component $\pi_v'$ such that the product 
$\langle \cdot, \cdot \rangle = \prod_v \langle \cdot, \cdot \rangle_v $ is the Petersson
inner product, and $dt$ denotes the Tamagawa measure. We refer the reader to the 
discussion below for more precise definitions. We show the following result.

\begin{theorem} [Theorem \ref{basicint}, Corollary \ref{nontriviality}, and Theorem \ref{mu}] 
Fix an eigenform  ${\bf{f}} \in \mathcal{S}_2(\mathfrak{N})$ subject to the hypotheses above, 
with $\overline{{\bf{Q}}} \rightarrow \overline{{\bf{Q}}}_p$ a fixed embedding. Let $\rho$ be a 
finite order character of the Galois group $G_{\mathfrak{p}^{\infty}}$ that factors though 
$G_{\mathfrak{p}^{n}}$ for some positive integer $n$. Let $\vert \cdot \vert$ denote the complex 
absolute value on $\overline{\bf{Q}}_p,$ taken with respect to a fixed embedding 
$\overline{\bf{Q}}_p \rightarrow \overline{\bf{C}}$.

\begin{itemize}
\item[(i)] If ${\bf{f}}$ is $\mathfrak{p}$-ordinary, then there exists a nontrivial 
element $\mathcal{L}_{\mathfrak{p}}({\bf{f}}, K_{\mathfrak{p}^{\infty}}) \in \Lambda$ such that 
the following equality holds in $\overline{\bf{Q}}_p$:

 \begin{align*} \vert \rho\left(\mathcal{L}_{\mathfrak{p}}({\bf{f}}, K_{\mathfrak{p}^{\infty}})
\right) \vert &= \frac{ \alpha_{\mathfrak{p}}^{-2n} \cdot
\zeta_F(2)}{ 2 \cdot L(\pi, \operatorname{ad}, 1)} \\
&\times  \left[ L(\pi, \rho, 1/2) \cdot L(\pi, \rho^{-1}, 1/2)
\cdot \prod_{v \nmid \infty} \alpha(\Phi_v, \rho_v) \cdot
\alpha(\Phi_v, \rho_v^{-1}) \right]^{\frac{1}{2}}. \end{align*}

\item[(ii)] If ${\bf{f}}$ is $\mathfrak{p}$-supersingular, then there 
exist two nontrivial elements $\mathcal{L}_{\mathfrak{p}}({\bf{f}}, K_{\mathfrak{p}^{\infty}})^{\pm} 
\in \Lambda$ such that the following equalities hold in $\overline{\bf{Q}}_p$:

\begin{align*} \vert \rho\left(\mathcal{L}_{\mathfrak{p}}({\bf{f}}, K_{\mathfrak{p}^{\infty}})^{\pm} 
\right) \vert &= \frac{\zeta_F(2)}{ 2 \cdot L(\pi, \operatorname{ad}, 1)} \\
&\times  \left[ L(\pi, \rho, 1/2) \cdot L(\pi, \rho^{-1}, 1/2)
\cdot \prod_{v \nmid \infty} \alpha(\Phi_v, \rho_v) \cdot
\alpha(\Phi_v, \rho_v^{-1}) \right]^{\frac{1}{2}}. \end{align*}
\end{itemize} 

\item[(iii)] We have that $\mu(\mathcal{L}_{\mathfrak{p}}(
{\bf{f}}, K_{\mathfrak{p}^{\infty}}) ) = \mu(\mathcal{L}_{\mathfrak{p}}(
{\bf{f}}, K_{\mathfrak{p}^{\infty}})^{\pm} ) = 2 \nu,$ where $\nu = \nu_{\Phi}$ 
denotes the largest integer such that  $\Phi$ is congruent to a constant 
$\mod \mathfrak{P}^{\nu}.$ \end{theorem}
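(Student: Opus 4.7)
The plan is to construct $\mathcal{L}_{\mathfrak{p}}({\bf{f}}, K_{\mathfrak{p}^{\infty}})$ and $\mathcal{L}_{\mathfrak{p}}({\bf{f}}, K_{\mathfrak{p}^{\infty}})^{\pm}$ as inverse limits of Bertolini-Darmon-type theta elements attached to the quaternionic form $\Phi$ obtained from ${\bf{f}}$ via the Jacquet-Langlands correspondence. The parity hypothesis on $\mathfrak{N}^{-}$ ensures that the quaternion algebra $B/F$ ramified exactly at $\mathfrak{N}^{-}$ and at every archimedean place is totally definite and admits an embedding of $K$. Fixing such an embedding and an Eichler order of level $\mathfrak{N}^{+}$, the associated Shimura set carries a compatible family of CM points $\{x_n^{\sigma}\}_{\sigma \in G_{\mathfrak{p}^{n}}}$ of conductor $\mathfrak{p}^{n}$. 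I form the theta element and its symmetrization
\[ \theta_n \;=\; \sum_{\sigma \in G_{\mathfrak{p}^{n}}} \Phi(x_n^{\sigma})\, \sigma \;\in\; \mathcal{O}[G_{\mathfrak{p}^{n}}], \qquad \vartheta_n \;=\; \theta_n \cdot \theta_n^{\iota}, \]
where $\iota \colon \sigma \mapsto \sigma^{-1}$. The symmetrized object $\vartheta_n$ is precisely what produces the coupled appearance of $\rho$ and $\rho^{-1}$ in the interpolation identities and, because $\iota$ preserves the $\mathfrak{P}$-filtration on $\Lambda$, doubles the $\mu$-invariant in the formula of part (iii).

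For part (i), in the $\mathfrak{p}$-ordinary case the Hecke relation at $\mathfrak{p}$ combined with the standard degeneracy relations among CM points of neighboring $\mathfrak{p}$-power conductor imply that $\alpha_{\mathfrak{p}}^{-n}\theta_n$, and hence $\alpha_{\mathfrak{p}}^{-2n}\vartheta_n$, are compatible under the canonical projection $\mathcal{O}[G_{\mathfrak{p}^{n+1}}] \to \mathcal{O}[G_{\mathfrak{p}^{n}}]$; I define $\mathcal{L}_{\mathfrak{p}}({\bf{f}}, K_{\mathfrak{p}^{\infty}})$ to be the inverse limit of the latter system. Evaluating at a finite-order character $\rho$ factoring through $G_{\mathfrak{p}^{n}}$ and using $\rho(\vartheta_n) = \theta_n(\rho) \cdot \theta_n(\rho^{-1})$, I reduce the interpolation identity to two toric period computations. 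Each is supplied by the explicit Waldspurger-type identity of Yuan-Zhang-Zhang \cite{YZ^2}, which expresses $|\theta_n(\rho)|^{2}$ as an explicit multiple of $L(\pi, \rho, 1/2)\cdot\prod_{v\nmid\infty}\alpha(\Phi_v, \rho_v)$ normalized by $L(\pi, \operatorname{ad}, 1)$ and $\zeta_F(2)$. Taking square roots, multiplying the formulas for $\rho$ and $\rho^{-1}$, and inserting the normalization factor $\alpha_{\mathfrak{p}}^{-2n}$ yields the displayed formula.

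For part (ii), in the supersingular case $a_{\mathfrak{p}}({\bf{f}}) = 0$ there is no unit root to absorb, and the Hecke relation at $\mathfrak{p}$ becomes a two-term recursion coupling $\theta_{n+1}$ to $\theta_{n-1}$. Following Pollack's strategy, suitably extended to the ${\bf{Z}}_p^{\delta}$-extension setting, I split $\{\vartheta_n\}$ into its even- and odd-indexed subsequences and rescale each by appropriate signed auxiliary sequences to produce two inverse-compatible systems, whose limits are the elements $\mathcal{L}_{\mathfrak{p}}^{\pm} \in \Lambda$. The absence of any unit normalization explains the disappearance of the $\alpha_{\mathfrak{p}}^{-2n}$ prefactor from the interpolation identity; apart from this, the Waldspurger-Yuan-Zhang-Zhang argument proceeds verbatim. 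Nontriviality of all four elements then follows from Howard's nonvanishing criterion applied to $\Phi$ inside the quaternionic Hida family through ${\bf{f}}$.

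For part (iii), since $\mathcal{L}$ is realized as an inverse limit of products $\theta_n \cdot \theta_n^{\iota}$ in the integral domain $\Lambda$ and $\iota$ preserves $\mathfrak{P}$-valuations, one has $\mu(\mathcal{L}) = 2\,\mu(\varprojlim_n \theta_n)$; it therefore suffices to verify $\mu(\varprojlim_n \theta_n) = \nu$. The lower bound $\mu(\varprojlim_n \theta_n) \geq \nu$ is immediate, since $\Phi \equiv c \bmod \mathfrak{P}^{\nu}$ forces every coefficient of $\theta_n$ to lie in $\mathfrak{P}^{\nu}$. The reverse bound is the expected main obstacle and reduces to the assertion that the non-constant function $(\Phi - c)/\varpi^{\nu}$ takes a unit value on the Galois orbit of CM points at level $\mathfrak{p}^{n}$ for all sufficiently large $n$, where $\varpi$ is a uniformizer of $\mathcal{O}$. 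This is precisely the content of the Cornut-Vatsal equidistribution theorem for CM points in the Shimura set of $B$ along the dihedral tower $K_{\mathfrak{p}^{\infty}}/K$.
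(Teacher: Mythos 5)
Your construction matches the paper's in its essentials: theta elements attached to the Jacquet--Langlands transfer $\Phi$, the $\alpha_{\mathfrak{p}}^{-n}$ normalization in the ordinary case, the Pollack-style $\pm$-splitting in the supersingular case, the symmetrization $\theta_n\theta_n^{\iota}$ producing the coupled $\rho,\rho^{-1}$, and the deduction of the interpolation identity from the Waldspurger--Yuan--Zhang--Zhang formula applied to the toric period $l(\Phi,\rho)$. Parts (i) and (ii), up to the nontriviality assertion, are a faithful summary of the argument in Theorem \ref{basicint}.

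There is, however, a genuine error in your treatment of nontriviality. You write that ``nontriviality of all four elements then follows from Howard's nonvanishing criterion applied to $\Phi$ inside the quaternionic Hida family.'' This is wrong on two counts. First, in this paper Howard's criterion is \emph{Conjecture} \ref{HC}, not a theorem: it is the statement one wants to establish in order to deduce the full Iwasawa main conjecture, so invoking it to prove nontriviality would be circular (or, at best, would render your result conditional where the paper's is unconditional). Second, Howard's criterion concerns the nontriviality of the whole \emph{congruence family} $\{\lambda_{\mathfrak{n}}\}$ modulo an arbitrary height-one prime of $\Lambda$ and a power of $\mathfrak{P}$, not the nonvanishing of a single $p$-adic $L$-function. (It also has nothing to do with Hida families.) The input actually needed, and used in Corollary \ref{nontriviality}, is the Cornut--Vatsal nonvanishing theorem (Theorem \ref{CV}, strengthened to Corollary \ref{strongCV} via Shimura's algebraicity), which guarantees that $L(\pi,\rho,1/2)\neq 0$ for infinitely many ring class characters $\rho$; combined with your own interpolation formula, this immediately forces $\mathcal{L}_{\mathfrak{p}}(\Phi,K)^{\star}\neq 0$ in $\Lambda$.

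Two smaller remarks on part (iii). For the lower bound you assert that $\Phi\equiv c\bmod\mathfrak{P}^{\nu}$ forces every coefficient of $\theta_n$ into $\mathfrak{P}^{\nu}$; this only holds if $c\in\mathfrak{P}^{\nu}$. The correct observation, as in the paper, is that for a \emph{nontrivial} finite-order $\rho$ one has $\rho(\theta_n)=\sum_{\sigma}(\Phi(x_n^{\sigma})-c)\rho(\sigma)\equiv 0\bmod\mathfrak{P}^{\nu}$ since the character sum $\sum_{\sigma}\rho(\sigma)$ vanishes. For the upper bound, the paper argues that if all coefficients of $\theta_n$ were congruent to one another modulo $\mathfrak{P}^{\nu+1}$ then $\Phi$ would be congruent to a constant mod $\mathfrak{P}^{\nu+1}$, contradicting maximality of $\nu$; your invocation of Cornut--Vatsal equidistribution to guarantee that the Galois orbits of CM points eventually detect a non-constant value of $(\Phi-c)/\varpi^{\nu}$ makes explicit the surjectivity that the paper's argument uses implicitly, and is if anything a more careful version of the same reasoning.
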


At this point, some remarks are in order. First of all, we note that this construction 
is the generalization to totally real fields of that given by Bertolini-Darmon 
\cite[$\S$ 1.2]{BD} and Darmon-Iovita \cite{DI}, building on the seminal work of 
Bertolini-Darmon \cite{BD2}. It is sketched by Longo in \cite{Lo2}, using the language 
of Gross points. The novelty of the construction given here is that we work adelically 
rather than $p$-adically, which allows us to use Waldspurger's theorem directly to deduce 
the interpolation property. This also allows us to give a simpler proof of the $\mu$-invariant 
formula than that given by Vatsal in \cite{Va2}. Finally, this construction allows us to reduce the 
associated Iwasawa main conjecture to a nonvanishing criterion for these $p$-adic $L$-functions 
via the theorem of Howard \cite[Theorem 3.2.3(c)]{Ho} (cf. also \cite[Theorem 1.3]{VO2}), as we 
explain below. In particular, Howard's criterion (Conjecture \ref{HC}) has the following applications 
to Iwasawa main conjectures. Let $\Sel_{p^{\infty}}({\bf{f}}, K_{\mathfrak{p}^{\infty}})$ denote the 
$p^{\infty}$-Selmer group associated to ${\bf{f}}$ in the tower $K_{\mathfrak{p}^{\infty}}/K$. 
We refer the reader to \cite{Ho0}, \cite{Lo2} or \cite{VO2} for definitions. Let \begin{align*} 
X({\bf{f}}, K_{\mathfrak{p}^{\infty}}) = \operatorname{Hom}\left( \Sel_{p^{\infty}}({\bf{f}}, K_{\mathfrak{p}^{\infty}}), 
{\bf{Q}}_p/{\bf{Z}}_p \right)\end{align*} denote the Pontryagin dual of 
$\Sel_{p^{\infty}}({\bf{f}}, K_{\mathfrak{p}^{\infty}})$, which has the structure of a 
compact $\Lambda$-module. If $X({\bf{f}}, K_{\mathfrak{p}^{\infty}})$ is $\Lambda$-torsion, 
then the structure theory of $\Lambda$-modules in \cite[$\S$ 4.5]{BB} assigns to 
$X({\bf{f}}, K_{\mathfrak{p}^{\infty}}) $ a $\Lambda$-characteristic power series 
\begin{align*} \operatorname{char}_{\Lambda}\left( X({\bf{f}}, K_{\mathfrak{p}^{\infty}})  
\right) \in \Lambda.\end{align*} The associated dihedral main conjecture of Iwasawa 
theory is then given by

\begin{conjecture}[Iwasawa main conjecture]\label{mainconjecture} Let  
${\bf{f}} \in \mathcal{S}_2(\mathfrak{N})$ be a Hilbert modular eigenform as defined above, 
such that the global root number of the complex central value $L({\bf{f}},K, 1) $ is $+1$. Then, 
the dual Selmer group $X({\bf{f}} , K_{\mathfrak{p}^{\infty}}) $ is $\Lambda$-torsion, and there 
is an equality of ideals \begin{align}\label{fullequality} \left(  \mathcal{L}_{\mathfrak{p}}({\bf{f}}, 
K_{\mathfrak{p}^{\infty}}) \right) = \left( \operatorname{char}_{\Lambda}\left( X({\bf{f}} , 
K_{\mathfrak{p}^{\infty}})  \right) \right) \text{ in } \Lambda. \end{align} \end{conjecture}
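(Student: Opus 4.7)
The plan is to deduce the conjectured equality from two largely disjoint inputs: the $p$-adic $L$-function $\mathcal{L}_{\mathfrak{p}}(\mathbf{f}, K_{\mathfrak{p}^\infty})$ produced in parts (i)--(ii) of the theorem above, and the Euler system of CM points on the Shimura curve attached to the indefinite quaternion algebra ramified at $\mathfrak{N}^-$ together with one archimedean place of $F$, as in Howard's work \cite{Ho} and its totally real extension in \cite{VO2}. The strategy is to first establish a one-sided divisibility between the characteristic ideal and the analytic ideal, and then promote it to an equality by matching $\mu$- and $\lambda$-invariants on the two sides.

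For the divisibility, I would apply Howard's theorem \cite[Theorem 3.2.3(c)]{Ho} directly. The Euler system of CM classes produces, via Kolyvagin-style descent along the dihedral tower $K_{\mathfrak{p}^\infty}/K$, both the $\Lambda$-torsionness of $X(\mathbf{f}, K_{\mathfrak{p}^\infty})$ and a divisibility
\begin{align*}
\operatorname{char}_{\Lambda}\!\bigl( X(\mathbf{f}, K_{\mathfrak{p}^\infty}) \bigr) \;\Big|\; \mathcal{L}_{\mathfrak{p}}(\mathbf{f}, K_{\mathfrak{p}^\infty})
\end{align*}
in $\Lambda$, conditional on the nontriviality of the $\Lambda$-adic system of Heegner classes. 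That nontriviality is precisely the content of Howard's criterion (Conjecture \ref{HC}): it says that the $p$-adic $L$-function (or the square-root of central values appearing in its interpolation) does not vanish identically modulo $\mathfrak{P}$ along the relevant horizontal family of dihedral twists of $\pi$.

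For the opposite divisibility, I would combine the interpolation formulae of parts (i)--(ii) with the $\mu$-invariant computation of part (iii). The Waldspurger/Yuan--Zhang--Zhang formula pins down $|\rho(\mathcal{L}_{\mathfrak{p}}(\mathbf{f}, K_{\mathfrak{p}^\infty}))|$ in terms of the central $L$-values $L(\pi, \rho, 1/2) L(\pi, \rho^{-1}, 1/2)$ and the explicit local factors $\alpha(\Phi_v, \rho_v)$, while part (iii) fixes the analytic $\mu$-invariant to $2\nu_\Phi$. Matching this against the algebraic $\mu$- and $\lambda$-invariants of $X(\mathbf{f}, K_{\mathfrak{p}^\infty})$, estimated from the Euler system side together with the control theorem relating $X(\mathbf{f}, K_{\mathfrak{p}^\infty})$ to its specializations at finite layers $K_{\mathfrak{p}^n}$, would force the one-sided divisibility above to become an equality of ideals up to units in $\Lambda$, which is \eqref{fullequality}.

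The main obstacle is the nonvanishing criterion itself. Over $F = \mathbf{Q}$ this is the Vatsal--Cornut equidistribution theorem for Heegner points modulo $p$, whose proof rests on Ratner's theorem on unipotent flows together with horizontal nonvanishing of Rankin--Selberg central $L$-values. Extending these ergodic and analytic inputs to totally real $F$ of degree $d > 1$, where the CM points live on higher-dimensional (totally definite) quaternionic Shimura sets and the relevant equidistribution statements have a much richer adelic structure, is the genuinely hard step; the rest of the argument above is essentially structural once the nonvanishing is in hand.
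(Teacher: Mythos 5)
This statement is labeled as a \emph{conjecture} in the paper (Conjecture~\ref{mainconjecture}), and the paper does not prove it. What the paper proves is Theorem~\ref{IMC}, which is strictly weaker: it requires $F = {\bf{Q}}$, surjectivity of the residual Galois representation, and a ramification hypothesis at primes $\mathfrak{q} \mid \mathfrak{N}^{-}$ with $\mathfrak{q} \equiv \pm 1 \bmod \mathfrak{p}$; it yields only the one-sided inclusion $\left(\mathcal{L}_{\mathfrak{p}}({\bf{f}}, K_{\mathfrak{p}^{\infty}})\right) \subseteq \left(\operatorname{char}_{\Lambda}\left(X({\bf{f}}, K_{\mathfrak{p}^{\infty}})\right)\right)$ unconditionally (via Bertolini--Darmon refined by Pollack--Weston), and the full equality only conditionally on Conjecture~\ref{HC}. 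You should be explicit that you are not proving the conjecture but at best reducing it, and you have also silently dropped the auxiliary hypotheses of Theorem~\ref{IMC}.

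Beyond that, there is a genuine gap in your second step. Your proposed route to the ``opposite divisibility'' --- matching the analytic $\mu$-invariant $2\nu_\Phi$ and $\lambda$-invariants against the algebraic side using the interpolation formula and control theorems --- is not what the paper does, and it does not work as stated. Knowing the $\mu$-invariant of $\mathcal{L}_{\mathfrak{p}}({\bf{f}}, K_{\mathfrak{p}^{\infty}})$ gives no a priori bound on $\mu$ or $\lambda$ of $X({\bf{f}}, K_{\mathfrak{p}^{\infty}})$; comparing the two is precisely the content of the main conjecture, so invoking it as a tool is circular. In the paper, the upgrade from divisibility to equality comes entirely from Howard's bipartite Euler system machinery \cite[Theorem 3.2.3(c)]{Ho}: the nonvanishing criterion (Conjecture~\ref{HC}) on the family $\{\lambda_{\mathfrak{n}}\}$ of level-raised theta elements forces the Euler system bound to be sharp at every height-one prime $\mathfrak{Q}$ of $\Lambda$. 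Your final paragraph correctly identifies this nonvanishing as the hard open input, but your middle paragraph proposes a shortcut around it that is not available.

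One smaller inaccuracy: the nonvanishing input the paper needs is not the Cornut--Vatsal horizontal nonvanishing theorem (Theorem~\ref{CV}), which the paper already has and uses to prove $\mathcal{L}_{\mathfrak{p}}({\bf{f}}, K_{\mathfrak{p}^{\infty}}) \neq 0$ (Corollary~\ref{nontriviality}). What Conjecture~\ref{HC} requires is a different, stronger statement: nonvanishing modulo an arbitrary height-one prime $\mathfrak{Q}$ of $\Lambda$ across the \emph{vertical} congruence family $\{{\bf{f}}^{(\mathfrak{n})}\}$ of level-raised forms, which even over $F={\bf{Q}}$ is open and is not reducible to Ratner-type equidistribution of Heegner points.
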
 
We may now deduce the following result towards this conjecture.

\begin{theorem}\label{IMC} Suppose that $F={\bf{Q}}.$ Let  ${\bf{f}} \in 
\mathcal{S}_2(\mathfrak{N})$ be a Hilbert modular eigenform as defined above. 
Assume that the residual Galois representation associated to ${\bf{f}}$ is surjective, 
as well as ramified at each prime $\mathfrak{q}\mid \mathfrak{N}^{-}$ such that 
$\mathfrak{q} \equiv \pm 1 \mod \mathfrak{p}.$  Then, the dual Selmer group 
$X({\bf{f}}, K_{\mathfrak{p}^{\infty}}) $ is $\Lambda$-torsion, and there is an inclusion 
of ideals \begin{align*} \left(\mathcal{L}_{\mathfrak{p}}({\bf{f}} , 
K_{\mathfrak{p}^{\infty}}) \right) \subseteq  
\left( \operatorname{char}_{\Lambda}\left( X({\bf{f}} , K_{\mathfrak{p}^{\infty}})  \right) 
\right) \text{ in } \Lambda. \end{align*} Moreover, if Conjecture \ref{HC} below holds, 
then there is an equality of ideals 
\begin{align*} \left(  \mathcal{L}_{\mathfrak{p}}({\bf{f}} , K_{\mathfrak{p}^{\infty}}) \right) =
 \left( \operatorname{char}_{\Lambda}\left( X({\bf{f}} , K_{\mathfrak{p}^{\infty}})  \right)  
\right) \text{ in } \Lambda. \end{align*}\end{theorem}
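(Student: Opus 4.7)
The approach is to combine Howard's Iwasawa-theoretic refinement of the Kolyvagin Euler system of Heegner points with the interpolation formula for $\mathcal{L}_{\mathfrak{p}}({\bf{f}}, K_{\mathfrak{p}^{\infty}})$ established in Theorem \ref{basicint}. Since $F={\bf{Q}}$, the form ${\bf{f}}$ is a classical weight-two cuspidal eigenform and $K_{\mathfrak{p}^{\infty}}$ is the anticyclotomic ${\bf{Z}}_p$-extension of an imaginary quadratic field $K$, which is precisely the setting in which Howard's machinery is directly available. The two-step logical structure of the proof is thus: first produce the one-sided divisibility from the Euler system of Heegner points, and then upgrade it to an equality under the nonvanishing criterion of Conjecture \ref{HC}.

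The first step is to invoke \cite[Theorem 3.2.3(c)]{Ho}. The assumed surjectivity of the residual Galois representation attached to ${\bf{f}}$, together with the ramification condition at primes $\mathfrak{q}\mid\mathfrak{N}^{-}$ with $\mathfrak{q}\equiv\pm 1\bmod\mathfrak{p}$, are exactly the hypotheses of that theorem, which are used to rule out degenerate reductions of Kolyvagin classes along the tower. Howard's theorem produces a $\Lambda$-adic compatible family of Heegner classes $\kappa_{\infty}$ in an Iwasawa cohomology group, shows that $X({\bf{f}}, K_{\mathfrak{p}^{\infty}})$ is $\Lambda$-torsion, and gives a divisibility of $\operatorname{char}_{\Lambda}X$ by the characteristic ideal of the Iwasawa cohomology modulo $\Lambda\kappa_{\infty}$. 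To convert this into the stated inclusion, I would identify the ideal generated by $\kappa_{\infty}$---through Howard's Perrin-Riou-style regulator, or equivalently via the adjointness between Heegner points and adelic test vectors---with the ideal $(\mathcal{L}_{\mathfrak{p}}({\bf{f}}, K_{\mathfrak{p}^{\infty}}))$. This identification is to be accomplished by applying the Gross--Zagier--Zhang formula at each finite layer of the tower and comparing with the adelic Waldspurger-type interpolation formula of Theorem \ref{basicint}, which expresses the values $|\rho(\mathcal{L}_{\mathfrak{p}})|$ as square roots of the very central $L$-values that appear in the height pairings on the Heegner points.

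For the conditional equality statement, Conjecture \ref{HC} asserts, in effect, that the $\Lambda$-module generated by the Heegner classes has generic rank one, i.e.\ that $\kappa_{\infty}$ is not $\Lambda$-torsion. Under this hypothesis, the Euler system machinery yields the opposite inclusion and the divisibility from the first step becomes an equality of ideals. The principal obstacle in this program is the reconciliation of normalizations: Howard's integral construction of $\kappa_{\infty}$ proceeds via a specific choice of Shimura curve parametrization and level structure, whereas $\mathcal{L}_{\mathfrak{p}}$ is built adelically from a test vector $\Phi$ on the definite quaternion side. The local factors $\alpha(\Phi_{v},\rho_{v})$ from Theorem \ref{basicint} must be matched prime-by-prime to the local factors appearing in the height pairing on Heegner points, and the match is particularly delicate in the $\mathfrak{p}$-supersingular regime, where the components $\mathcal{L}_{\mathfrak{p}}^{\pm}$ must be individually identified with their Euler-system counterparts by separating the two eigenspaces of Frobenius on the local Galois representation.
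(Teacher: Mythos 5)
The proposal is built on the wrong Euler system framework and does not match the setting of the theorem or the paper's argument. Throughout the paper the root number of $L({\bf{f}},K,s)$ at $s=1$ is $+1$: the quaternion algebra $B$ is totally definite, the $p$-adic $L$-function is manufactured from theta elements on the definite side, and the central value is computed via Waldspurger's theorem (Theorem \ref{W}), not Gross--Zagier. In this ``definite'' regime there is no family of Heegner classes $\kappa_{\infty}$ attached to ${\bf{f}}$ itself, no Perrin-Riou regulator to evaluate, and the dual Selmer group is expected to be $\Lambda$-torsion outright rather than of corank one. Your proposal is instead tracing the outline of Howard's Heegner-point Iwasawa theory (\cite{Ho0}), which applies to the complementary root-number-$-1$ case, and you then invoke Gross--Zagier--Zhang to link $\kappa_{\infty}$ to $\mathcal{L}_{\mathfrak{p}}$; that derivative formula simply does not enter here. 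The relevant reference is Howard's \emph{bipartite} Euler system machinery, and the Euler system is the one of Bertolini--Darmon refined by Pollack--Weston: one raises the level along admissible primes $\mathfrak{n}$, and the Kolyvagin-type classes alternate between theta elements $\lambda_{\mathfrak{n}}$ on totally definite quaternion algebras (when $\omega_{K/F}(\mathfrak{n}\mathfrak{N})=-1$) and Heegner points on \emph{auxiliary} indefinite Shimura curves (when $\omega_{K/F}(\mathfrak{n}\mathfrak{N})=+1$). The residual surjectivity and ramification hypotheses in the statement are exactly the Pollack--Weston conditions that let one run this argument in the $\Lambda$-adic setting without the $p$-isolatedness hypothesis of \cite{BD}, and the $\Lambda$-torsion plus one-sided divisibility then fall out of the first-reciprocity/second-reciprocity bounds of the bipartite theory.

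Your reading of Conjecture \ref{HC} is also misaligned. It does not assert that a Heegner-class module has generic rank one; it is a nonvanishing criterion on the \emph{analytic} side, requiring that for every height-one prime $\mathfrak{Q}$ of $\Lambda$ some theta element $\lambda_{\mathfrak{n}}$ in the congruence family survives modulo $(\mathfrak{Q},\mathfrak{P}^{k_0})$. In Howard's Theorem 3.2.3(c) this nondegeneracy of the family is precisely what converts the one-sided divisibility into an equality of characteristic ideals. So while your two-step logical skeleton (divisibility first, equality under a nontriviality input) does shadow the paper's strategy, the objects you populate it with are from the wrong half of the theory, and the reconciliation of normalizations you flag as the ``principal obstacle'' is not the obstacle that arises in the paper at all, since no height pairing or Shimura curve parametrization of ${\bf{f}}$ enters the argument.
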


\begin{proof} The result follows from the refinement of the Euler system argument of 
Bertolini-Darmon \cite{BD} given by Pollack-Weston \cite{PW}, which satisfies the hypotheses 
of Howard \cite[Theorem 3.2.3]{Ho0}, since it removes the $p$-isolatedness condition from 
the work of \cite{BD}. $\Box$ \end{proof}  We can state the criterion of Conjecture \ref{HC} in 
the following more explicit way. Let us now assume for simplicity that $\mathfrak{N}$ is prime
to the relative discriminant of $K$ over $F$. Fix a positive integer $k$. Let us define a set of 
admissible primes $\mathfrak{L}_k$ of $F$, all of which are inert in $K$, with the condition 
that for any ideal $\mathfrak{n}$ in the set $\mathfrak{S}_k$ of squarefree products of primes 
in $\mathfrak{L}_k$, there exists a nontrivial eigenform ${\bf{f}}^{(\mathfrak{n})}$ of level 
$\mathfrak{nN}$ such that we have the following congruence on Hecke eigenvalues:
\begin{align*} {\bf{f}}^{(\mathfrak{n})} \equiv {\bf{f}} \mod \mathfrak{P}^k .\end{align*} Let
$\mathfrak{L}_{k}^{+} \subset \mathfrak{L}_{k}$ denote the subset of primes $v$ for which 
$\omega(v\mathfrak{N}) = -1$, equivalently for which the root number of the Rankin-Selberg 
$L$-function $L({\bf{f}}^{(\mathfrak{n})}, K, s)$ equals $1$. Let $\mathfrak{S}_{k}^{+}$ denote 
the set of squarefree products of primes in $\mathfrak{L}_k^{+}$, including the so-called 
empty product corresponding to $1$. Now, for each vertex $\mathfrak{n} \in 
\mathfrak{S}_j^+$, we have a $p$-adic $L$-function 
$\mathcal{L}_{\mathfrak{p}}({\bf{f}}^{(\mathfrak{n})}, K_{\mathfrak{p}^{\infty}})$ or
$\mathcal{L}_{\mathfrak{p}}({\bf{f}}^{(\mathfrak{n})}, K_{\mathfrak{p}^{\infty}})^{\pm}$
in $\Lambda$ by the construction given above. As we explain below, each of these
$p$-adic $L$-functions is given by a product of completed group ring elements
$\theta_{{\bf{f}}^{\mathfrak{n}}} \theta_{{\bf{f}}^{\mathfrak{n}}}^{*}$, where 
$\theta_{{\bf{f}}^{\mathfrak{n}}} \in \Lambda$ is constructed in a natural way 
from the eigenform ${\bf{f}}^{\mathfrak{n}}$ via the Jacquet-Langlands 
correspondence and strong approximation at $\mathfrak{p}$, and 
$\theta_{{\bf{f}}^{\mathfrak{n}}}^{*} $ is the image of 
$\theta_{{\bf{f}}^{\mathfrak{n}}} $ under the involution of $\Lambda$
sending $\sigma$ to $\sigma^{-1}$ in $G_{\mathfrak{p}^{\infty}}$. Let
us write $\lambda_{\mathfrak{n}}$ to denote the completed group ring
element $\theta_{{\bf{f}}^{\mathfrak{n}}}$, which is only well defined up
to automorphism of $G_{\mathfrak{p}^{\infty}}$. We then obtain from Theorem 
\ref{IMC} the following result, following the discussion in \cite[Theorem 3.2.3 (c)]{Ho} 
(cf. also \cite[Theorem 1.3]{VO2}).

\begin{corollary} \label{IMCr1} Keep the setup of Theorem \ref{IMC}.
Suppose that for {\it{any}} height one prime $\mathfrak{Q}$ of $\Lambda$,
there exists an integer $k_0$ such that for all integers $j \geq k_0$ the set
\begin{align*} \lbrace \lambda_{\mathfrak{n}} \in \Lambda/(\mathfrak{P}^j): 
\mathfrak{n} \in \mathfrak{S}_{j}^{+} \rbrace\end{align*} contains at least one 
completed group ring element $\lambda_{\mathfrak{n}}$ with nontrivial image in 
$\Lambda/(\mathfrak{Q}, \mathfrak{P}^{k_0})$. Then, there is an equality of ideals
\begin{align*} \left(  \mathcal{L}_{\mathfrak{p}}({\bf{f}} , K_{\mathfrak{p}^{\infty}}) \right) =
 \left( \operatorname{char}_{\Lambda}\left( X({\bf{f}} , K_{\mathfrak{p}^{\infty}})  \right)  
\right) \text{ in } \Lambda. \end{align*} \end{corollary}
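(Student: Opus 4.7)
The plan is to reduce to Theorem \ref{IMC}, which already supplies the $\Lambda$-torsion property of $X({\bf{f}}, K_{\mathfrak{p}^{\infty}})$ together with the inclusion $(\mathcal{L}_{\mathfrak{p}}({\bf{f}}, K_{\mathfrak{p}^{\infty}})) \subseteq (\operatorname{char}_{\Lambda}(X({\bf{f}}, K_{\mathfrak{p}^{\infty}})))$, and also the conditional equality of ideals under Conjecture \ref{HC}. Thus the only remaining task is to show that the nontriviality hypothesis of the corollary implies Conjecture \ref{HC}.

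First I would unpack Howard's criterion as formulated in \cite[Theorem 3.2.3(c)]{Ho} and the reformulation \cite[Theorem 1.3]{VO2}: for each height one prime $\mathfrak{Q} \subset \Lambda$, the criterion requires that some Kolyvagin-type class, produced from a member of the congruent family $\{{\bf{f}}^{(\mathfrak{n})}\}$ at sufficient $\mathfrak{P}$-adic precision, be nonzero in $\Lambda/(\mathfrak{Q},\mathfrak{P}^{k_0})$. The bridge between that criterion and the hypothesis of the corollary is the identification of those classes with the $\theta$-elements $\theta_{{\bf{f}}^{\mathfrak{n}}} = \lambda_{\mathfrak{n}}$, each of which is constructed from ${\bf{f}}^{(\mathfrak{n})}$ via the Jacquet-Langlands correspondence and strong approximation at $\mathfrak{p}$, as in the factorization $\mathcal{L}_{\mathfrak{p}}({\bf{f}}^{(\mathfrak{n})}, K_{\mathfrak{p}^{\infty}}) = \theta_{{\bf{f}}^{\mathfrak{n}}} \theta_{{\bf{f}}^{\mathfrak{n}}}^{*}$ sketched in the introduction.

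Next I would verify that the combinatorial labelling agrees: the set $\mathfrak{L}_k^{+}$ of admissible primes with $\omega(v\mathfrak{N}) = -1$ cuts out precisely the $+$-side vertices of Howard's bipartite Euler system, and the congruence ${\bf{f}}^{(\mathfrak{n})} \equiv {\bf{f}} \mod \mathfrak{P}^k$ on Hecke eigenvalues transports under Jacquet-Langlands and strong approximation to the corresponding congruence on the quaternionic side, yielding the level-raising compatibility of the $\lambda_{\mathfrak{n}}$ required by Howard. Given this matching, the hypothesis of the corollary---that for every height one prime $\mathfrak{Q}$ and all sufficiently large $j$ there exists $\mathfrak{n} \in \mathfrak{S}_j^{+}$ with $\lambda_{\mathfrak{n}}$ nontrivial in $\Lambda/(\mathfrak{Q},\mathfrak{P}^{k_0})$---is exactly the nontriviality input of Conjecture \ref{HC}. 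Invoking Theorem \ref{IMC} then yields the desired equality of ideals.

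The step I would treat with most care, and where the main obstacle lies, is the bookkeeping showing that the $\lambda_{\mathfrak{n}}$ genuinely form a system compatible with Howard's level-raising Euler system structure. This requires a consistent choice of local test vectors $\Phi_v$ across the family and the verification that strong approximation at $\mathfrak{p}$ preserves the precision $\mathfrak{P}^k$ of the Hecke-eigenvalue congruence ${\bf{f}}^{(\mathfrak{n})} \equiv {\bf{f}}$ up to an $\mathcal{O}$-unit, so that the reductions mod $\mathfrak{P}^j$ used in the hypothesis actually match those appearing in \cite[Theorem 3.2.3]{Ho}. Once this compatibility is checked, no further input is needed beyond Theorem \ref{IMC}.
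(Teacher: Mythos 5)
Your reduction to Theorem \ref{IMC} is exactly right, and that is the paper's own route: the paper gives no separate proof of this corollary precisely because its hypothesis is, word for word, the content of Conjecture \ref{HC} as defined in the final section (where ``Howard's criterion for ${\bf{f}}$ and $K$ at $\mathfrak{Q}$'' is \emph{defined} to mean that for some $k_0$ and all $j \geq k_0$ the set $\lbrace \lambda_{\mathfrak{n}} : \mathfrak{n} \in \mathfrak{S}_j^{+} \rbrace$ has an element nontrivial in $\Lambda/(\mathfrak{Q},\mathfrak{P}^{k_0})$). So the corollary is an immediate restatement of the conditional clause of Theorem \ref{IMC}.

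Where your proposal overreaches is the second half: the ``bookkeeping'' you describe --- matching the $\lambda_{\mathfrak{n}} = \theta_{{\bf{f}}^{(\mathfrak{n})}}$ against Howard's bipartite Euler system, checking the $+$-labelling of admissible primes, propagating the congruence ${\bf{f}}^{(\mathfrak{n})} \equiv {\bf{f}} \bmod \mathfrak{P}^k$ through Jacquet--Langlands and strong approximation, and verifying compatible local test vectors --- is not part of proving this corollary. That verification is the \emph{content} of Theorem \ref{IMC} itself (via Howard's Theorem 3.2.3 and Pollack--Weston), which the corollary takes as given. If you were to actually carry out that verification here you would be re-proving Theorem \ref{IMC} rather than deducing the corollary from it. The only logical step needed is the observation that the corollary's hypothesis is Conjecture \ref{HC}; after that, citing Theorem \ref{IMC} finishes the argument in one line.
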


The results of Theorem \ref{IMC} and Corollary \ref{IMCr1} extend to the general setting of 
totally real fields, as explained in Theorem 1.3 of the sequel paper \cite{VO2}. We omit 
the statements here for simplicity of exposition. Finally, let us note that while we have not treated the construction for higher weights, the 
issue of Jochnowitz congruences (following Vatsal \cite{Va2} with Rajaei \cite{Raj}), or 
Howard's criterion itself in this note, these problems in fact motivate this work.

\begin{remark}[Notations.] Let ${\bf{A}}_F$ denote the adeles of $F$, with 
${\bf{A}} = {\bf{A}}_{\bf{Q}}.$ Let ${\bf{A}}_f$ denote the finite adeles
of ${\bf{Q}}.$ We shall sometimes write $T = \Res_{F/{\bf{Q}}}(K^{\times})$ 
to denote the algebraic group associated to $K^{\times}$, and 
$Z =  \Res_{F/{\bf{Q}}}(F^{\times})$ the algebraic group associated o $F^{\times}.$ 
Given a finite prime $v$ of $F$, fix a uniformizer $\varpi_v$ of $F_v$, and 
let $\kappa_v$ denote the residue field of $F_v$ at $v$, with $q = q_v$ its cardinality. 
 \end{remark}

\section{Some preliminaries}

\begin{remark}[Ring class towers.]

Given an ideal $\mathfrak{c} \subset \mathcal{O}_F$, let $\mathcal{O}_{\mathfrak{c}} = \mathcal{O}_F +
\mathfrak{c}\mathcal{O}_K$ denote the $\mathcal{O}_F$-order of conductor
$\mathfrak{c}$ in $K$. The  {\it{ring class field of conductor $\mathfrak{c}$ of $K$}} 
is the Galois extension $K[\mathfrak{c}]$ of $K$ characterized by class field 
theory via the identification \begin{align*}\begin{CD} \widehat{K}^{\times} / 
\widehat{\mathcal{O}}_{\mathfrak{c}}^{\times} K^{\times}
@>{\rec_K}>> \Gal(K[\mathfrak{c}]/K).\end{CD}\end{align*} Here, 
$\rec_K$ denotes reciprocity map, normalized to send uniformizers 
to their corresponding geometric Frobenius endomorphisms. Let $G[\mathfrak{c}]$ denote 
the Galois group $\Gal(K[\mathfrak{c}~]/K)$. Let $K[\mathfrak{p}^{\infty}] = \bigcup_{n\geq 0} 
K[\mathfrak{p}^n]$ denote the union of all ring class extensions of $\mathfrak{p}$-power conductor 
over $K$. Let us write $G[\mathfrak{p}^{\infty}]$ to denote the Galois group $\Gal(K[\mathfrak{p}^{\infty}]/K)$, 
which has the structure of a profinite group $G[\mathfrak{p}^{\infty}] = \varprojlim_n G[\mathfrak{p}^n].$

\begin{lemma}\label{Galois} The Galois group $G[\mathfrak{p}^{\infty}]$ has the
following properties. \begin{itemize}
\item[(i)] The reciprocity map $\rec_K$ induces an isomorphism of
topological groups \begin{align*}\begin{CD}
\widehat{K}^{\times}/K^{\times} U @>{\rec_K}>>
G[\mathfrak{p}^{\infty}],\end{CD}\end{align*}  where \begin{align*}
U = \bigcap_{n \geq 0}\widehat{\mathcal{O}}_{\mathfrak{p}^n}^{\times} = \lbrace x \in
\widehat{\mathcal{O}}_{K}^{\times}: x_{\mathfrak{p}} \in
\mathcal{O}_{F_{\mathfrak{p}}} \rbrace.\end{align*}

\item[(ii)] The torsion subgroup $G[\mathfrak{p}^{\infty}]_{\tors}$
of $G[\mathfrak{p}^{\infty}]$ is finite. Moreover, there is an isomorphism
of topological groups
\begin{align*}G[\mathfrak{p}^{\infty}]/G[\mathfrak{p}^{\infty}]_{\tors}
\longrightarrow {\bf{Z}}_{p}^{\delta},\end{align*} where $\delta =[F_{\mathfrak{p}}:
{\bf{Q}}_p].$\end{itemize}\end{lemma}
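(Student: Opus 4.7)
The plan is to deduce both parts from the class field theoretic isomorphisms $\widehat{K}^{\times}/K^{\times}\widehat{\mathcal{O}}_{\mathfrak{p}^n}^{\times} \cong G[\mathfrak{p}^n]$ already given at each finite level, by passing to the inverse limit in $n$ and then carrying out a ${\bf{Z}}_p$-rank computation via Dirichlet's unit theorem.

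For part (i), since each $\widehat{\mathcal{O}}_{\mathfrak{p}^n}^{\times}$ is a closed subgroup of the compact group $\widehat{\mathcal{O}}_K^{\times}$ and the transition maps are surjective, applying $\varprojlim_n$ to the class field theory isomorphism yields an isomorphism $\widehat{K}^{\times}/K^{\times} U \cong G[\mathfrak{p}^{\infty}]$ with $U = \bigcap_n \widehat{\mathcal{O}}_{\mathfrak{p}^n}^{\times}$. It remains to identify $U$ componentwise. For $v \nmid \mathfrak{p}$, the ideal $\mathfrak{p}^n \mathcal{O}_K$ contains $\mathcal{O}_{K,v}$, so $\mathcal{O}_{\mathfrak{p}^n,v} = \mathcal{O}_{K,v}$ for every $n$, giving $U_v = \mathcal{O}_{K,v}^{\times}$. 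For $v = \mathfrak{p}$, one has $\mathcal{O}_{\mathfrak{p}^n,\mathfrak{p}} = \mathcal{O}_{F_{\mathfrak{p}}} + \mathfrak{p}^n \mathcal{O}_{K_{\mathfrak{p}}}$, and since $\bigcap_n \mathfrak{p}^n \mathcal{O}_{K_{\mathfrak{p}}} = 0$, a direct check shows the intersection of the corresponding unit groups equals $\mathcal{O}_{F_{\mathfrak{p}}}^{\times}$, which assembles into the description given in the statement.

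For part (ii), I would exploit the exact sequence
$$ 1 \longrightarrow \widehat{\mathcal{O}}_K^{\times}/\mathcal{O}_K^{\times} U \longrightarrow \widehat{K}^{\times}/K^{\times} U \longrightarrow \widehat{K}^{\times}/K^{\times} \widehat{\mathcal{O}}_K^{\times} \longrightarrow 1, $$
whose right-hand term is the class group of $K$, hence finite. By part (i), $U_v = \mathcal{O}_{K,v}^{\times}$ at every $v \nmid \mathfrak{p}$, so the left-hand term reduces to $\mathcal{O}_{K_{\mathfrak{p}}}^{\times}/(\mathcal{O}_{F_{\mathfrak{p}}}^{\times} \cdot \overline{\mathcal{O}_K^{\times}})$, where $\overline{\mathcal{O}_K^{\times}}$ denotes the closure at $\mathfrak{p}$ of the diagonal image of the global units. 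The $p$-adic logarithm shows, uniformly over the three possible splitting behaviors of $\mathfrak{p}$ in $K$, that $\mathcal{O}_{K_{\mathfrak{p}}}^{\times}$ has ${\bf{Z}}_p$-rank $2\delta$ while $\mathcal{O}_{F_{\mathfrak{p}}}^{\times}$ has ${\bf{Z}}_p$-rank $\delta$, so the quotient has ${\bf{Z}}_p$-rank exactly $\delta$ modulo finite torsion. Dirichlet's theorem moreover gives $\mathcal{O}_K^{\times}$ and $\mathcal{O}_F^{\times}$ each of ${\bf{Z}}$-rank $d - 1$ (since $K$ is totally imaginary of degree $2d$ over ${\bf{Q}}$ while $F$ is totally real of degree $d$), and since $\mathcal{O}_F^{\times}$ dies in $\mathcal{O}_{K_{\mathfrak{p}}}^{\times}/\mathcal{O}_{F_{\mathfrak{p}}}^{\times}$, the image of $\mathcal{O}_K^{\times}$ there is finite and does not shift the rank. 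Combining these observations presents $G[\mathfrak{p}^{\infty}]$ as a finitely generated compact abelian group of ${\bf{Z}}_p$-rank $\delta$ with finite torsion subgroup, yielding (ii).

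The main obstacle I anticipate is the careful bookkeeping of the various finite contributions, namely the class group of $K$, the roots of unity in $\mathcal{O}_{K_{\mathfrak{p}}}^{\times}/\mathcal{O}_{F_{\mathfrak{p}}}^{\times}$, and the image of $\mathcal{O}_K^{\times}/\mathcal{O}_F^{\times}$, together with verifying that they assemble into a single finite torsion subgroup of $G[\mathfrak{p}^{\infty}]$, so that the torsion-free quotient is pro-$p$ and hence a free ${\bf{Z}}_p$-module of rank exactly $\delta$.
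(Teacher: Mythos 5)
The paper does not prove this lemma; it simply cites Cornut--Vatsal, \cite[Lemma 2.1, Corollary 2.2]{CV}, so there is no in-paper argument to compare against. Your route is the natural one and part (ii) is substantively sound, but part (i) as written has a genuine gap.

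The problematic step is the assertion that ``applying $\varprojlim_n$ to the class field theory isomorphism yields an isomorphism $\widehat{K}^{\times}/K^{\times}U \cong G[\mathfrak{p}^{\infty}]$.'' Taking the inverse limit of the finite ring class groups identifies $G[\mathfrak{p}^{\infty}]$ with $\widehat{K}^{\times}$ modulo the subgroup $\bigcap_n K^{\times}\widehat{\mathcal{O}}_{\mathfrak{p}^n}^{\times}$, and it is \emph{not} automatic that this intersection equals $K^{\times}\bigcap_n\widehat{\mathcal{O}}_{\mathfrak{p}^n}^{\times} = K^{\times}U$. In general $\bigcap_n(AB_n)$ can be strictly larger than $A\bigcap_n B_n$: inside $\widehat{\mathcal{O}}_K^{\times}$ the intersection $\bigcap_n\mathcal{O}_K^{\times}\widehat{\mathcal{O}}_{\mathfrak{p}^n}^{\times}$ is a priori only the preimage of the \emph{closure} of the image of $\mathcal{O}_K^{\times}$ in $\widehat{\mathcal{O}}_K^{\times}/U$, and there is no reason this closure should equal the image itself. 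The hypotheses you cite (that $\widehat{\mathcal{O}}_{\mathfrak{p}^n}^{\times}$ are closed in a compact group, surjective transition maps) only guarantee Mittag--Leffler surjectivity onto the limit; they do not control the kernel. What rescues the statement is precisely the unit-theoretic observation you defer to part (ii): since $F$ is totally real of degree $d$ and $K$ totally imaginary of degree $2d$, $\mathcal{O}_K^{\times}/\mathcal{O}_F^{\times}$ is finite, and $\mathcal{O}_F^{\times}\subset U$ because elements of $\mathcal{O}_F^{\times}$ have $\mathfrak{p}$-component in $\mathcal{O}_{F_{\mathfrak{p}}}^{\times}$. Hence $\mathcal{O}_K^{\times}U$ is a finite union of cosets of the compact group $U$, so it is closed, which forces $\bigcap_n K^{\times}\widehat{\mathcal{O}}_{\mathfrak{p}^n}^{\times}=K^{\times}U$ and also makes the quotient Hausdorff. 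You need to state and use this in part (i), not only in part (ii).

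With that repair, your computation of $U$ componentwise is correct, and your treatment of part (ii) via the exact sequence
\begin{align*}
1 \longrightarrow \widehat{\mathcal{O}}_K^{\times}/\mathcal{O}_K^{\times}U \longrightarrow \widehat{K}^{\times}/K^{\times}U \longrightarrow \widehat{K}^{\times}/K^{\times}\widehat{\mathcal{O}}_K^{\times} \longrightarrow 1
\end{align*}
together with the $p$-adic-logarithm rank count is the standard and correct argument. One small point you elide: after noting that the $\mathbf{Z}_p$-rank is $\delta$ and the torsion is finite, you should also remark that the prime-to-$p$ part of $\widehat{\mathcal{O}}_K^{\times}/\mathcal{O}_K^{\times}U$ is finite (it lives in the prime-to-$p$ torsion of $\mathcal{O}_{K_{\mathfrak{p}}}^{\times}/\mathcal{O}_{F_{\mathfrak{p}}}^{\times}$), so the torsion-free quotient is pro-$p$, and only then is it a free $\mathbf{Z}_p$-module of rank $\delta$ rather than merely a profinite group of that rank.
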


\begin{proof}
See \cite[$\S$ Lemma 2.1 and Corollary 2.2]{CV}. 
\end{proof} We shall use the following notations throughout.
Let $G_{\mathfrak{p}^{\infty}} = G[\mathfrak{p}^{\infty}]/
G[\mathfrak{p}^{\infty}]_{\tors}$ denote the
${\bf{Z}}_{p}^{\delta}$-quotient of $G[\mathfrak{p}^{\infty}]$. Let $K_{\mathfrak{p}^{\infty}}$ denote 
the dihedral or anticyclotomic ${\bf{Z}}_{p}^{\delta}$-extension of $K$, so that \begin{align*} 
G_{\mathfrak{p}^{\infty}} = \Gal(K_{\mathfrak{p}^{\infty}}/K) \cong
{\bf{Z}}_{p}^{\delta}.\end{align*} Given a positive integer $n$, we then let $K_{\mathfrak{p}^n}$ 
denote the extension of $K$ for which \begin{align*}G_{ \mathfrak{p}^n} = \Gal(K_{\mathfrak{p}^n}/K) \cong \left(
{\bf{Z}}/p^n{\bf{Z}}\right)^{\delta},\end{align*} so that 
$G_{\mathfrak{p}^{\infty}} = \varprojlim_n G_{\mathfrak{p}^n}$. \end{remark}

\begin{remark}[Central values.]

Here, we record the refinement of Waldspurger's theorem \cite{Wa}
given by Yuan-Zhang-Zhang \cite{YZ^2}, as well as the nonvanishing
theorem given by Cornut-Vatsal \cite{CV}. Let $B$ be a totally definite quaternion
algebra defined over $F$. We view the group of invertible elements $B^{\times}$ 
as an algebraic group with centre $Z$. We then view the group $K^{\times}$ as a 
maximal torus $T$ of $B^{\times}$ via a fixed embedding $K \longrightarrow B$. 
Fix an idele class character \begin{align*}\rho = 
\otimes_v \rho_v: {\bf{A}}_{K}^{\times}/K^{\times} \longrightarrow
{\bf{C}}^{\times}.\end{align*} Let $\pi = \otimes_v \pi_v$ be a cuspidal
automorphic representation of $\GL({\bf{A}}_F)$, assumed to have
trivial central character. Let $L(\pi, \rho, s)=\prod_v L(\pi_v,
\rho_v, s)$ denote the Rankin-Selberg $L$-function associated to
$\pi$ and $\rho$, with central value at $s=1/2$. Let $\epsilon(\pi_v, \rho_v, 1/2) 
\in \lbrace \pm1 \rbrace $ denote the local root number of $L(\pi, \rho, 1/2)$ at a 
prime $v$ of $F$. Let $\omega = \omega_{K/F}$ denote the quadratic character
associated to $K/F$. The set of places $v$ of $F$ given by
\begin{align*} \Sigma = \lbrace v: \epsilon(\pi_v, \rho_v, 1/2) \neq
\rho_v \cdot \omega_v (-1) \rbrace
\end{align*} is known to have finite cardinality, and the global
root number to be given by the product formula \begin{align*}
\epsilon(\pi, \rho, 1/2) = \prod_v \epsilon(\pi_v, \rho_v, 1/2) =
\left(-1 \right)^{|\Sigma|}.\end{align*} A theorem of Tunnell and
Saito gives a criterion to determine whether or not a given
prime of $F$ belongs to this set $\Sigma$. To state this
theorem, we must first say a word or two about the Jacquet-Langlands
correspondence. That is, the theorem of Jacquet and Langlands
\cite{JL} establishes an injection $ \Pi \longrightarrow \JL(\Pi)$ from 
the set of automorphic representations of $\left( B \otimes {\bf{A}}_F \right)^{\times}$ of
dimension greater than $1$ to the set of cuspidal automorphic
representations of $\GL({\bf{A}}_F)$. Moreover, it characterizes
the image of this injection as cuspidal automorphic
representations of $\GL({\bf{A}}_F)$ that are discrete
series (i.e. square integrable) at each prime $v \in \Ram(B)$. 
Let us write $\pi' = \prod_v \pi_{v}'$ to denote the Jacquet-Langlands 
correspondence of $\pi$ on $\left( B \otimes {\bf{A}}_F \right)^{\times}$, 
so that $\pi = \JL(\pi')$.

\begin{theorem}[Tunnell-Saito] \label{TS} If $\pi_v$ is a
discrete series for some prime $v$ of $F$, then let $\pi_{v}'$
denote the Jacquet-Langlands lift of $\pi_v$. Fix embeddings of 
algebraic subgroups $K_{v}^{\times} \rightarrow B_{v}^{\times}$ and $K_{v}^{\times}
\rightarrow \GL(F_v)$. We have that $v \in \Sigma$ if any only if 
$\Hom_{K_{v}^{\times}}(\pi_v \otimes \rho_v,{\bf{C}})=0.$ Moreover, we 
have that \begin{align}\label{ts} 
\operatorname{dim} \left( \Hom_{K_{v}^{\times}}(\pi_v \otimes
\rho_v, {\bf{C}}) \oplus \Hom_{K_{v}^{\times}}(\pi_{v}' \otimes \rho_v,
{\bf{C}}) \right) =1,\end{align} with the second space
$\Hom_{K_{v}^{\times}}(\pi_v' \otimes \rho_v, {\bf{C}})$ treated as
zero if $\pi_v$ is not discrete.
\end{theorem}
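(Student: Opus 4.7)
The plan is to reduce the statement to a purely local problem at $v$, since the $\Hom$-spaces and the local root number $\epsilon(\pi_v,\rho_v,1/2)$ depend only on local data. The first step is to establish the multiplicity-one assertion underlying (\ref{ts}), namely that each summand is at most one-dimensional. For $\GL(F_v)$ this comes from a Mackey-type decomposition of $\pi_v\otimes\rho_v$ restricted to $K_v^\times$ via the Bruhat decomposition with respect to this torus, combined with the assumption of trivial central character; for $B_v^\times$ a division algebra it follows from the finite-dimensionality of $\pi_v'$ and a direct character argument.

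Next I would run a case analysis on the local extension $K_v/F_v$ (split, inert unramified, ramified) and on the type of the discrete series $\pi_v$ (Steinberg or supercuspidal), realizing $\pi_v$ in an appropriate model --- Kirillov for Steinberg, or compact induction from a character of a maximal compact-mod-centre subgroup for supercuspidals. In each case the $K_v^\times$-invariant linear form is expressed as an integral of a matrix coefficient against $\rho_v^{-1}$ over $K_v^\times/F_v^\times$, whose (non)vanishing is detected by a local functional equation of Jacquet type. The parallel computation on $B_v^\times$, when $B_v$ is a division algebra, reduces to a character computation since the image of $K_v^\times$ is contained in a maximal compact-mod-centre subgroup of $B_v^\times$. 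The split case serves as a base check: here $K_v^\times$ is a Levi subgroup of $\GL(F_v)$, the $\Hom$-space is one-dimensional, and one verifies directly that $\epsilon(\pi_v,\rho_v,1/2)=\rho_v\omega_v(-1)$, so $v\notin\Sigma$.

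The matching of the two sides with the sign $\rho_v\omega_v(-1)$ then proceeds by computing the Deligne-Langlands epsilon factor of the tensor product of the Weil-Deligne parameter of $\pi_v$ with the Galois character attached to $\rho_v$ by local class field theory, and comparing it with the integrals above. The main obstacle is the supercuspidal wildly ramified case: Tunnell's original argument covers the principal series, Steinberg, and tamely ramified supercuspidal situations, but the wild case requires Saito's refinement, which either invokes Howe factorizations of the inducing character or proceeds by a stability argument under twisting by a sufficiently ramified quasicharacter. Once the epsilon factor computation is matched with the invariant-form computation in all cases and combined with multiplicity one, the dichotomy (\ref{ts}) together with the criterion for membership in $\Sigma$ follows.
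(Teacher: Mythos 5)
The paper does not prove Theorem \ref{TS}: its ``proof'' consists entirely of the citation to Tunnell \cite{Tu} and Saito \cite{Sai}, treating the dichotomy as a known black box. What you have written is therefore not an alternative to an argument in the paper but an outline of the original Tunnell--Saito proof itself. As such, your sketch is broadly faithful to the strategy in those references: reduce to a local statement, establish multiplicity one for each summand, realize $\pi_v$ in a convenient model according to its type, express the invariant functional as an integral of a matrix coefficient against $\rho_v^{-1}$ over $K_v^{\times}/F_v^{\times}$, and match its nonvanishing against the local root number $\epsilon(\pi_v,\rho_v,1/2)$, with Saito supplying the wildly ramified supercuspidal case that Tunnell's argument does not cover.

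Two cautions about precision. First, the multiplicity-one statement $\dim\Hom_{K_v^{\times}}(\pi_v\otimes\rho_v,{\bf{C}})\le 1$ on the $\GL$ side is itself a nontrivial theorem for infinite-dimensional $\pi_v$; the ``Mackey-type decomposition via the Bruhat decomposition'' you invoke works cleanly for representations parabolically induced from the Borel, but for supercuspidals one must work with the compactly induced model and the double-coset analysis is more delicate (this is essentially Waldspurger's local multiplicity one, or the analysis in Tunnell's paper itself). You should not present it as an easy preliminary. Second, in the split case $K_v\cong F_v\times F_v$ the verification that $\epsilon(\pi_v,\rho_v,1/2)=\rho_v\omega_v(-1)$ is not literally immediate: one has to use that $\rho_v$ is trivial on $F_v^{\times}$ to split $\rho_v=(\chi,\chi^{-1})$, factor the Rankin--Selberg epsilon factor as $\epsilon(\pi_v\otimes\chi,1/2)\epsilon(\pi_v\otimes\chi^{-1},1/2)$, and then apply the standard functional-equation identity for $\epsilon$ of a representation and its contragredient. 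None of this is a fatal gap, but if you intend this as a proof sketch rather than a pointer to the literature, these are the places where the real work of \cite{Tu} and \cite{Sai} is being absorbed silently.
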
 \begin{proof} See \cite{Tu} and \cite{Sai}. 
\end{proof} Let us note that in the case where the root number 
$\epsilon(\pi, \rho, 1/2)$ equals $+1$, the set $\Sigma$ has even 
cardinality, in which case the ramification set of the quaternion algebra 
$B$ over $F$ is given exactly by $\Sigma$. Anyhow, we may now state 
the refinement of Waldspurger's theorem given by Yuan-Zhang-Zhang 
\cite[$\S$ 1.2]{YZ^2}. Given a vector $\Phi \in \pi'$, consider the period 
integral $l(\cdot, \rho): \pi' \rightarrow {\bf{C}}$ defined by
\begin{align}\label{periodintegral} l(\Phi, \rho) =
\int_ {T({\bf{A}}_F)/Z({\bf{A}}_F)T(F)} \rho (t) \cdot \Phi(t)
dt.\end{align} Here, $dt$ denotes the Tamagawa measure on
$T({\bf{A}}_F)$, which has volume $2 L(\omega, 1)$ on the
quotient $T({\bf{A}}_F)/ Z({\bf{A}}_F) T(F) $, and volume $2$ on the
quotient $\left( B \otimes{\bf{A}}_F \right)^{\times}/
{\bf{A}}_F^{\times} B^{\times} $. Let $\langle \cdot, \cdot \rangle$
denote the Petersson inner product on $\left( B \otimes{\bf{A}}_F
\right)^{\times}/{\bf{A}}_F^{\times} B^{\times}$ with respect to
$dt$. Fix a nontrivial hermitian form $\langle \cdot, \cdot
\rangle_v$ on $\pi_{v}'$ such that there is a product formula
$\langle \cdot, \cdot \rangle = \prod_v \langle \cdot, \cdot
\rangle_v$.

\begin{theorem}[Waldspurger, Yuan-Zhang-Zhang]\label{W}
Assume that $\Phi \in \pi'$ is nonzero and decomposable. Then for
any prime $v$ of $F$, the local functional defined by
\begin{align*}\alpha(\Phi_v, \rho_v) = \frac{ L(\omega_v, 1) \cdot L(\pi_v,
\operatorname{ad}, 1) }{\zeta_v(2) \cdot L(\pi_v, \rho_v, 1/2)}
\cdot \int_{K_{v}^{\times}/F_{v}^{\times}} \langle \pi_v'(t)\Phi_v,
\Phi_v\rangle_v \cdot \rho_v(t) d t \end{align*} does not vanish, and equals
$1$ for all but finitely many primes $v$ of $F$. Moreover, we have the identity
\begin{align}\label{value} \left| l(\Phi, \rho)\right|^2 =
\frac{\zeta_F(2) \cdot L(\pi, \rho, 1/2)}{2 \cdot L(\pi,
\operatorname{ad}, 1)} \cdot \prod_{v \nmid \infty} \alpha(\Phi_v,
\rho_v),\end{align} where the value $(\ref{value})$ is algebraic.
Here, $\zeta_F(s) = \prod_v \zeta_{F,v}(s)$ is the Dedekind zeta
function of $F$, and $L(\pi, \operatorname{ad}, s) = \prod_v L(\pi_v,
\operatorname{ad}, s)$ the $L$-series of the adjoint representation.
\end{theorem}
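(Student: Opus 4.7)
The plan is to derive the identity by a multiplicity-one argument for toric periods and then pin down the global proportionality constant via a theta lift / Rankin--Selberg computation. First, one checks directly from (\ref{periodintegral}) that $l(\pi'(t_0)\Phi, \rho) = \rho(t_0)^{-1} l(\Phi, \rho)$, so $\Phi \mapsto l(\Phi, \rho)$ belongs to $\Hom_{T({\bf A}_F)}(\pi' \otimes \rho, {\bf C})$ and therefore $\Phi \mapsto |l(\Phi, \rho)|^2$ defines a sesquilinear form in $\Hom_{T({\bf A}_F)}(\pi' \otimes \overline{\pi'}, {\bf C})$. By Tunnell--Saito (Theorem \ref{TS}), together with the fact that $\Ram(B) = \Sigma$ (forced by the assumption $\epsilon(\pi, \rho, 1/2) = +1$ and the choice of $B$), each local Hom space $\Hom_{K_v^\times}(\pi_v' \otimes \rho_v, {\bf C})$ is exactly one-dimensional, so the global invariant space is one-dimensional by restricted tensor product.

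Next, the local matrix coefficient integrals
\begin{align*} I_v(\Phi_v, \rho_v) = \int_{K_v^\times/F_v^\times} \langle \pi_v'(t)\Phi_v, \Phi_v\rangle_v \cdot \rho_v(t) \, dt \end{align*}
converge (by matrix-coefficient asymptotics for tempered representations) and give nonzero elements of these local Hom spaces. At unramified places Macdonald's formula evaluates $I_v$ to the Euler quotient $\zeta_v(2) L(\pi_v, \rho_v, 1/2)/(L(\omega_v,1) L(\pi_v, \operatorname{ad},1))$, so $\prod_v I_v$ converges and gives a nonzero global element of the same one-dimensional invariant space. The rescaling built into the definition of $\alpha(\Phi_v, \rho_v)$ is precisely the one making $\alpha(\Phi_v, \rho_v) = 1$ at almost all $v$, which proves the nonvanishing and ``$=1$ almost everywhere'' assertions and absorbs the Euler product into the global $L$-functions. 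By multiplicity one, there must be a scalar $C(\pi, \rho)$ such that $|l(\Phi, \rho)|^2 = C(\pi, \rho) \prod_v I_v(\Phi_v, \rho_v)$ for every decomposable $\Phi$.

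To identify $C(\pi, \rho)$, I would pair $\Phi \otimes \overline{\Phi}$ against the theta kernel for the seesaw dual pair attached to $B$ and $K$ (Shimizu's theta lift). Unfolding the resulting global integral on one side produces $|l(\Phi, \rho)|^2$, while unfolding on the other side produces a standard Rankin--Selberg integral representing $L(\pi, \rho, 1/2)/L(\pi, \operatorname{ad}, 1)$ decorated by the local factors $I_v$, with the prefactor $\zeta_F(2)/2$ arising from the Tamagawa volume of $T({\bf A}_F)/Z({\bf A}_F)T(F)$ (itself equal to $2L(\omega,1)$, which cancels cleanly against the corresponding local Euler product). Algebraicity of the central value then follows because both sides are realized as periods of algebraic automorphic forms on the totally definite quaternion side via Jacquet--Langlands. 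The main obstacle -- and the content of the refinement \cite{YZ^2} over Waldspurger's original theorem -- is the ramified-place matching in this last step: one must produce explicit local test vectors and Eisenstein sections so that the local theta/Rankin--Selberg integrals literally equal the stated $L$-factor quotients at every place, so that $C(\pi, \rho)$ is pinned down exactly rather than only up to unknown local constants as in Waldspurger's original formulation.
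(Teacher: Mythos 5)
The paper does not actually prove Theorem~\ref{W}: its ``proof'' consists of a citation to Waldspurger \cite{Wa} and to \cite[Proposition 1.2.1]{YZ^2}, together with the one-line observation that algebraicity of the value \eqref{value} follows because $B$ is totally definite (so the automorphic forms involved are finite-valued and the period integral is a finite sum). So any comparison must be against those references, not against an argument in this paper.

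Taken on its own terms, your sketch follows the standard Waldspurger/YZZ line and is broadly correct: equivariance of $l(\cdot,\rho)$ under the torus, local and global multiplicity one via Tunnell--Saito (with $\Ram(B)=\Sigma$ forced by the root-number hypothesis), Macdonald/Casselman--Shalika evaluation of the unramified matrix-coefficient integral into the Euler quotient, and pinning the proportionality constant. A few points are worth tightening. First, at each finite $v$ the quotient $K_v^\times/F_v^\times$ is compact, so convergence of $I_v$ is trivial and you do not need matrix-coefficient asymptotics; in the totally definite setting the archimedean places are likewise compact, which is exactly why the paper restricts the product to $v\nmid\infty$ and why the algebraicity statement is easy. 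Second, local one-dimensionality of the $\Hom$ spaces does not by itself give global multiplicity one for the linear form $\Phi\mapsto l(\Phi,\rho)$; you are implicitly using Waldspurger's theorem that the global toric period, when nonzero, spans the space built from local functionals, which should be cited explicitly. Third, the constant $\zeta_F(2)/2$ is not the Tamagawa volume of $T(\mathbf{A}_F)/Z(\mathbf{A}_F)T(F)$ (which is $2L(\omega,1)$, as the paper notes); it arises from normalizing the Petersson pairing against the volume $2$ of $(B\otimes\mathbf{A}_F)^\times/\mathbf{A}_F^\times B^\times$ together with the regularization of the adjoint $L$-value, so the bookkeeping there needs to be done more carefully than your parenthetical suggests. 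Finally, Waldspurger's original proof and the YZZ refinement are not quite the same mechanism: Waldspurger uses the Shimizu/theta seesaw as you describe, whereas \cite{YZ^2} assembles the local linear functionals $\alpha(\cdot,\cdot)$ directly and reduces the exact constant to a local spherical computation plus Waldspurger's nonvanishing; your sketch blends the two, which is fine as motivation but would need disentangling in a full write-up.
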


\begin{proof} See \cite{Wa} or \cite[Proposition 1.2.1]{YZ^2}. In
this setting, the value $(\ref{value})$ is known to lie in the field
${\bf{Q}}(\pi, \rho)$ generated by values of $\pi$ and $\rho$ as a
consequence of the fact that $B$ is totally definite (and hence compact). 
\end{proof} Let us now record the following nonvanishing theorem for the
central values $L(\pi, \rho, 1/2)$, whose proof relies on the
results of Tunnell-Saito and Waldspurger, as explained in the
introduction of \cite{CV}. Given an integer $n \geq 1$, let us call
a character that factors through $G[\mathfrak{p}^{n}]$
{\it{primitive of conductor $n$}} if it does not factor though
$G[\mathfrak{p}^{n-1}]$. Let $P(n, \rho_0)$ denote the set of
primitive characters on $G[\mathfrak{p}^n]$ with associated
(tamely ramified) character $\rho_0$ on $G[\mathfrak{p}^{\infty}]_{\tors}$.

\begin{theorem}[Cornut-Vatsal]\label{CV}
Assume that the root number $\epsilon(\pi, \rho, 1/2)$ is
$1$, and that the prime to $\mathfrak{p}$-part of the level of
$\pi$ is prime to the relative discriminant $\mathfrak{D}_{K/F}$.
Then, for all $n$ sufficiently large, there exists a primitive 
character $\rho \in P(n, \rho_{0})$ such that $L(\pi, \rho, 1/2) \neq 0$.
\end{theorem}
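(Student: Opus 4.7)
The plan is to reduce nonvanishing of $L(\pi,\rho,1/2)$ to nonvanishing of the toric period integral $l(\Phi,\rho)$ of Theorem \ref{W}, and then to establish the latter via equidistribution of Galois orbits of Gross points as $n \to \infty$. First, since $\epsilon(\pi,\rho,1/2)=+1$, the Tunnell--Saito set $\Sigma$ has even cardinality, and I would take $B/F$ to be the quaternion algebra with ramification set $\Ram(B) = \Sigma$; the hypothesis that the prime-to-$\mathfrak{p}$ part of the level of $\pi$ is coprime to $\mathfrak{D}_{K/F}$ ensures that $K$ embeds into $B$. Let $\pi'$ on $(B\otimes {\bf{A}}_F)^{\times}$ be such that $\pi = \JL(\pi')$. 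Theorem \ref{TS} supplies a nontrivial element of $\Hom_{K_v^{\times}}(\pi_v'\otimes\rho_v,{\bf{C}})$ at each $v\in\Sigma$, and using this I would pick a decomposable vector $\Phi = \otimes_v \Phi_v \in \pi'$ whose local components at $v\mid\mathfrak{N}$ and the finite places in $\Sigma$ depend only on $\pi_v$ and the tame character $\rho_{0,v}$, and whose component at $\mathfrak{p}$ is the appropriate new vector. This choice makes each local factor $\alpha(\Phi_v,\rho_v)$ nonzero uniformly in $\rho\in P(n,\rho_0)$, so Theorem \ref{W} reduces the nonvanishing of $L(\pi,\rho,1/2)$ to the nonvanishing of $l(\Phi,\rho)$.

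Second, since $B$ is totally definite, the adelic quotient $(B\otimes{\bf{A}}_F)^{\times}/ U\, Z({\bf{A}}_F)\, B^{\times}$ is a finite set for any sufficiently small open compact subgroup $U$, and the integral \eqref{periodintegral} collapses to a finite character sum of the form
\begin{align*}
l(\Phi,\rho) = c\cdot \sum_{[t]\in T(F)Z({\bf{A}}_F)\backslash T({\bf{A}}_F)/U_T} \rho(t)\,\Phi(t),
\end{align*}
where $U_T = U\cap T({\bf{A}}_F)$ and $c$ is an explicit volume factor. The indexing set is a finite set of ``Gross points'' on which $G[\mathfrak{p}^n]$ acts through the reciprocity map of Lemma \ref{Galois}, so $l(\Phi,\rho)$ is essentially a $\rho$-Fourier coefficient of $\Phi$ restricted to the Galois orbit of a fixed Gross point of conductor $\mathfrak{p}^n$.

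The main obstacle, which is the real content of \cite{CV}, is to establish equidistribution of the $G[\mathfrak{p}^n]$-orbit of such a Gross point on the relevant finite set as $n\to\infty$. This is accomplished in \cite{CV} by combining strong approximation for $B^{\times}$ with an application of Ratner's theorem on unipotent flows on the adelic quotient $Z({\bf{A}}_F)T({\bf{A}}_F)\backslash (B\otimes {\bf{A}}_F)^{\times}$, generalizing the rational case treated by Vatsal to totally real fields. Granted equidistribution, Fourier inversion over $P(n,\rho_0)$ shows that $\sum_{\rho\in P(n,\rho_0)} |l(\Phi,\rho)|^2$ admits a main term proportional to $|P(n,\rho_0)|$ and a subordinate error, so for $n$ sufficiently large at least one $\rho\in P(n,\rho_0)$ satisfies $l(\Phi,\rho)\neq 0$; by the first step, $L(\pi,\rho,1/2)\neq 0$ for the same $\rho$. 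The ergodic input is by far the deepest step; everything else is formal manipulation of Tunnell--Saito, Waldspurger, and Fourier analysis on the finite abelian group $G[\mathfrak{p}^n]$.
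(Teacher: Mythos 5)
The paper offers no argument for this theorem; the ``proof'' is a bare citation to \cite[Theorem~1.4]{CV}, with \cite{Va} for the case $F={\bf{Q}}$. Your sketch is therefore being compared to the strategy of the cited work, not to anything in the present paper. At the level of broad outline you have it right: Waldspurger and Tunnell--Saito convert the question into nonvanishing of the toric period $l(\Phi,\rho)$ for a suitably chosen test vector $\Phi$ on a totally definite $B$ with $\Ram(B)=\Sigma$; the period collapses to a finite character sum over a Galois orbit of Gross points of conductor $\mathfrak{p}^n$; and the essential input is equidistribution of that orbit as $n\to\infty$, which Cornut and Vatsal obtain from measure rigidity for unipotent flows. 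Since you, like the paper, ultimately defer this last step to \cite{CV}, the proposal is acceptable as a sketch.

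Two points of the description are imprecise, though, and worth flagging even for a sketch. First, the homogeneous space you name, $Z({\bf{A}}_F)T({\bf{A}}_F)\backslash(B\otimes{\bf{A}}_F)^{\times}$, is a finite set (indeed you observe this yourself one sentence earlier) and carries no dynamics; the Ratner-type input in \cite{CV} is applied to an $S$-arithmetic quotient of the form $\Gamma\backslash B_S^{\times}$ with $S$ containing $\mathfrak{p}$ and the archimedean places, where the unipotent subgroup comes from the Iwahori/Borel structure of $B_{\mathfrak{p}}^{\times}\cong\GL(F_{\mathfrak{p}})$, not from an adelic quotient by the torus. Second, Cornut--Vatsal do not run a second-moment estimate over $P(n,\rho_0)$ in the way you describe; equidistribution is used to show that the function $\sigma\mapsto\Phi(\sigma\star x_n)$ on $G[\mathfrak{p}^n]$ is asymptotically nonconstant (in fact surjective modulo $\mathfrak{P}$ after reduction), and the existence of a primitive $\rho$ with nonvanishing Fourier coefficient then follows from an analysis of which characters can contribute rather than from a direct Parseval lower bound. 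Neither of these is a gap in your argument, since you are invoking \cite{CV} for precisely this part, but as stated the mechanism is garbled.
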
  

\begin{proof}
See \cite[Theorem 1.4]{CV}, as well as the main result of
\cite{Va} for $F={\bf{Q}}$. \end{proof} \end{remark} We 
may then deduce from the algebraicity theorem of Shimura \cite{Sh2}, 
which applies to the values $L(\pi, \rho, 1/2)$, the following strengthening 
of this result.

\begin{corollary}\label{strongCV}
Assume that the root number $\epsilon(\pi, \rho, 1/2)$ is
$1$, and that the prime to $\mathfrak{p}$-part of the level of
$\pi$ is prime to the relative discriminant $\mathfrak{D}_{K/F}$.
Let $Y$ denote the set of all finite order characters of the Galois 
group $G[\mathfrak{p}^{\infty}]$, viewed as idele class characters
of $K$ via the reciprocity map $\rec_K$. Then, for all but finitely
many characters $\rho$ in $Y$, the central value $L(\pi, \rho, 1/2)$
does not vanish. \end{corollary}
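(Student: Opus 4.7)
My plan is to combine Theorem \ref{CV} with the algebraicity of the normalized central values so as to bootstrap from the existence of a single nonvanishing character per sufficiently large conductor to nonvanishing for all but finitely many characters in $Y$. I would first use Lemma \ref{Galois}(ii) to note that the torsion subgroup $G[\mathfrak{p}^{\infty}]_{\tors}$ is finite, so $Y$ decomposes as a finite disjoint union of subsets $Y_{\rho_0}$ indexed by tame characters $\rho_0$, and it therefore suffices to show that only finitely many $\rho \in Y_{\rho_0}$ have vanishing central value, for each fixed $\rho_0$.

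Next I would invoke Shimura's algebraicity theorem together with the Waldspurger--Yuan--Zhang--Zhang formula (Theorem \ref{W}) to deduce that for a suitable transcendental period $\Omega_\pi$, the normalized central values $L(\pi,\rho,1/2)/\Omega_\pi$ lie in the number field ${\bf{Q}}(\pi,\rho)$ and transform Galois-equivariantly: $\sigma(L(\pi,\rho,1/2)/\Omega_\pi) = L(\pi^\sigma,\rho^\sigma,1/2)/\Omega_{\pi^\sigma}$ for every $\sigma \in \Gal(\overline{\bf{Q}}/{\bf{Q}})$. The point of passing through the explicit formula of Theorem \ref{W} is that the individual local factors $\alpha(\Phi_v,\rho_v)$ are manifestly algebraic, so the Galois action on the right-hand side of \eqref{value} is dictated entirely by its action on $(\pi,\rho)$. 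The payoff is that the vanishing locus of $L(\pi,\rho,1/2)$ inside $Y_{\rho_0}$ is stable under the simultaneous Galois action on the pair $(\pi,\rho)$.

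I would then apply Theorem \ref{CV} to each of the finitely many Galois conjugates $\pi^\sigma$ of $\pi$ (finite because the Hecke field of $\pi$ is a number field) to produce, for every $n$ sufficiently large, at least one primitive character of conductor $p^n$ in each Galois orbit of $P(n,\rho_0)$ with nonvanishing associated $L$-value. Galois propagation from the previous step then forces $L(\pi,\rho,1/2)\neq 0$ for every $\rho \in P(n,\rho_0)$ once $n \geq n_0$, so the exceptional set within $Y_{\rho_0}$ is contained in $\bigcup_{n<n_0} P(n,\rho_0)$ and hence finite; combining over the finitely many tame $\rho_0$ gives the corollary. The main obstacle will be the covering argument in the last step, namely verifying that the combined Galois action on $(\pi,\rho)$ partitions $P(n,\rho_0)$ into orbits each of which is genuinely hit by Theorem \ref{CV} applied to some conjugate $\pi^\sigma$, rather than all of the CV-nonvanishing characters landing in one or two of the orbits.
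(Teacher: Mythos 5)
Your outline matches what the paper itself sketches: no proof environment is given for Corollary \ref{strongCV}, only the remark that it ``may be deduced from the algebraicity theorem of Shimura'' together with Theorem \ref{CV}, which is precisely the Galois-equivariance route you describe. Your first two steps are fine: decomposing $Y$ along the finitely many tame components $\rho_0$ is exactly what the parametrization $P(n,\rho_0)$ in Theorem \ref{CV} calls for, and passing through the explicit formula of Theorem \ref{W} is a clean way to make the Galois equivariance of the normalized central value visible.

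However, the concern you flag in your last paragraph is a genuine gap, not just a loose end, as soon as $\delta = [F_{\mathfrak{p}}:{\bf{Q}}_p] > 1$. Identify $G_{\mathfrak{p}^n}$ with $({\bf{Z}}/p^n{\bf{Z}})^{\delta}$, so a character $\rho$ factoring through $G_{\mathfrak{p}^n}$ is a tuple $(a_1,\ldots,a_{\delta})$ in $({\bf{Z}}/p^n{\bf{Z}})^{\delta}$. The action of $\Gal(\overline{\bf{Q}}/{\bf{Q}}(\pi))$ is $\rho \mapsto \rho^{c}$ for $c$ ranging over a finite-index subgroup of $({\bf{Z}}/p^n{\bf{Z}})^{\times}$, so each orbit in $P(n,\rho_0)$ has size at most $\varphi(p^n)$, whereas $|P(n,\rho_0)|$ is comparable to $p^{n\delta}$. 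The number of Galois orbits therefore grows like $p^{n(\delta-1)}$. Theorem \ref{CV} produces a single nonvanishing character per level $n$, which propagates to a single orbit; and applying it to the Galois conjugates $\pi^{\sigma}$ contributes at most $[{\bf{Q}}(\pi):{\bf{Q}}]$ further orbits, a bounded number. So for $\delta>1$ the covering argument you worry about does fail, and the proposal as written does not close.

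The way out is to notice that the cited result \cite[Theorem 1.4]{CV} (and, for $F={\bf{Q}}$, the main theorem of \cite{Va2}) in fact proves the ``all but finitely many'' statement of Corollary \ref{strongCV} directly; the version recorded in this paper as Theorem \ref{CV} is a weakened restatement. The Shimura/Galois-equivariance input you invoke is indeed used inside Cornut--Vatsal's proof, but it is coupled there with an equidistribution argument (Ratner's theorem applied to toric orbits on definite quaternion quotients) that handles precisely the orbit-covering problem. So the honest proof of the corollary is simply a citation of \cite[Theorem 1.4]{CV}, with the Shimura step absorbed into that theorem rather than laid on top of the weaker existence statement quoted as Theorem \ref{CV}.
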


\section{Modular forms on totally definite quaternion algebras}

We follow the exposition given in Mok \cite[$\S$2]{Mok}. Fix an
ideal $\mathfrak{N} \subset \mathcal{O}_F$ as defined in $(\ref{N})$ above. Fix a totally
imaginary quadratic extension $K/F$ of relative discriminant $\mathfrak{D}_{K/F}$ prime
to $\mathfrak{N}$, so that we have the factorization
$(\ref{factorization})$ of $\mathfrak{N}$ in $\mathcal{O}_F$. Let $B$ denote the
totally definite quaternion algebra over $F$ of discriminant
$\mathfrak{N}^{-}$.  Let us also fix isomorphisms $\iota_v: B_v \cong
M_2(F_v)$ for all primes $v \notin \Ram(B)$ of $F$. Note that $B$ splits 
at $\mathfrak{p}$ by our hypotheses on the level $\mathfrak{N}$.

\begin{remark}[Basic definition.] 

Let $\mathcal{O}$ be any ring. An {\it{$\mathcal{O}$-valued automorphic form of weight 
$2$, level $H$, and trivial central character on $B^{\times}$}} is a function \begin{align*}
\Phi: B^{\times}\backslash  \widehat{B}^{\times}/H \longrightarrow \mathcal{O}\end{align*} 
such that for all $g \in B^{\times}$, $b \in \widehat{B}^{\times}$, $h \in H$, and $z \in 
\widehat{F}^{\times}$, 
\begin{align}\label{transformation} \Phi(g b h z ) =\Phi(b). \end{align} 
Let $\mathcal{S}_2^B(H; \mathcal{O})$ denote the 
space of such functions, modulo those which factor through
the reduced norm homomorphism $\nrd$. A function in  
$\mathcal{S}_2^B(H; \mathcal{O})$ is called an 
{\it{$\mathcal{O}$-valued modular form
of weight $2$, level $H$, and trivial central character on $B^{\times}$.}}\end{remark}  

\begin{remark}[Choice of levels.] 

Fix $\mathfrak{M} \subset \mathcal{O}_F$ an integral ideal prime to the discriminant 
$\mathfrak{N}^{-}$ (we shall often just take $\mathfrak{M} = \mathfrak{p}\mathfrak{N}^{+}$). 
Given a finite prime $v \subset F$, let $R_v$ be a local order of $B_v$ such 
that \begin{align}\label{condition} \text{$R_v$ is {\it{maximal}} if $v
\mid \mathfrak{N}^{-}$, or {\it{Eichler of level
$v^{\ord_v(\mathfrak{M})}$}} if $v \nmid \mathfrak{N}^{-}$. }
\end{align} Write $\widehat{R} = \prod_v R_v$. Let $R = B \cap
\widehat{R}$ denote the corresponding Eichler order of $B$. Let $H_v
= R_{v}^{\times}$, so that $H = \prod_v H_v \subset
\widehat{B}^{\times}$ is the corresponding compact open subgroup. We
shall assume throughout that a compact open subgroup $H \subset
\widehat{B}^{\times}$ takes this form, in which case we refer to it as
{\it{$\mathfrak{M}$-level structure on $B$}}. \end{remark}

\begin{remark}[Hecke operators.]

We define Hecke operators acting on the space $S_2^B(H; \mathcal{O})$.

\begin{remark}[The operators $T_v.$] 

Given any finite prime $v$ of $F$ that splits $B$ and does not divide the level of $H$, 
we define Hecke operators $T_v$ as follows. Note that since $R_v \subset B_v$ is 
maximal by $(\ref{condition})$, we have the identification $\iota_{v}: R_v^{\times} \cong
\GL(\mathcal{O}_{F_v})$. Suppose now that we have any double coset
decomposition \begin{align}\label{T_vdecomp}\GL(\mathcal{O}_{F_v}) 
\left(\begin{array} {cc} \varpi_v& 0 \\
0 & 1 \end{array}\right)  \GL(\mathcal{O}_{F_v}) = \coprod_{a \in
{\bf{P}}^1(\kappa_v)} \sigma_a~\GL(\mathcal{O}_{F_v}).\end{align} The Hecke
operator $T_v$ is then defined by the rule \begin{align*}\left( T_v \Phi \right)(b) =
\sum_{a \in {\bf{P}}^1(\kappa_v)} \Phi\left( b \cdot
\iota_{v}^{-1}\left( \sigma_a \right) \right).\end{align*}
An easy check with the transformation property $(\ref{transformation})$ 
shows that these definitions do not depend on choice of representatives 
$\lbrace \sigma_a(v)\rbrace_{a \in {\bf{P}}^1(\kappa_v)}$.\end{remark}

The set of representatives 
$\lbrace \sigma_a \rbrace_{a \in \kappa_{\mathfrak{p}}}$ in $(\ref{T_vdecomp})$
has the following lattice description. Let $q = q_v$ denote the cardinality
of $\kappa_v$. Let $\lbrace L(a) \rbrace_{a \in \kappa_v}$ 
denote the set of $q+1$ sublattices of
$\mathcal{O}_{F_v} \oplus \mathcal{O}_{F_v}$ of index equal to $q$.            
Arrange the matrix representatives $\sigma_a$ so that $\sigma_a
\left( \mathcal{O}_{F_v} \oplus
\mathcal{O}_{F_v}\right) = L(a)$. The set of lattices
$\lbrace L(a) \rbrace_{a \in {\bf{P}}^1(\kappa_v)}$ then describes
the set of representatives. 

\begin{remark}[The operator $U_{\mathfrak{p}}.$]  

In general, given any finite prime $v$ of $F$ and any integer $m \geq 1$,
we let $I_{v^m}$ denote the Iwahori subgroup of level $v^m$ of 
$\GL(\mathcal{O}_{F_v})$, \begin{align*}I_{v^m} =
\lbrace \left(\begin{array} {cc} a & b \\
c & d \end{array}\right) \in \GL(\mathcal{O}_{F_v}): c \equiv 0
\mod \varpi_{v}^m \rbrace.\end{align*} Let us now suppose that 
$v$ is a finite prime of $F$ not dividing $\mathfrak{N}^{-}$ where
the level is not maximal, though we shall only be interested in the 
special case of $v = \mathfrak{p}$. Hence, taking $v = \mathfrak{p}$, let us assume 
that the level $H = \prod_v R_v^{\times}$ is chosen so that $\iota_{\mathfrak{p}} :
R_{\mathfrak{p}}^{\times} \cong I_{\mathfrak{p}}.$ Suppose we
have any double coset decomposition \begin{align}\label{U_pdecomp}
I_{\mathfrak{p}} \left(\begin{array} {cc} 1 & 0 \\
0 & \varpi_{ \mathfrak{p} } \end{array}\right) I_{\mathfrak{p}} =
\coprod_{a \in \kappa_{\mathfrak{p}}} \sigma_a ~ I_{\mathfrak{p}}.\end{align}
The operator $U_{\mathfrak{p}}$ is then defined by the rule \begin{align*}\left(
U_{\mathfrak{p}} \Phi \right)(b) = \sum_{a \in \kappa_{\mathfrak{p}}
} \Phi\left( b \cdot
\iota_{\mathfrak{p}}^{-1}(\sigma_a)\right).\end{align*} Another easy check
with the transformation property $(\ref{transformation})$ shows that
this definition does not depend on choice of representatives
$\lbrace \sigma_a \rbrace_{a \in \kappa_{\mathfrak{p}}}$. 

The set of representatives $\lbrace \sigma_a \rbrace_{a \in \kappa_{\mathfrak{p}}}$ 
in $(\ref{U_pdecomp})$ has the following lattice description. Recall 
that we fix a uniformizer $\varpi_{\mathfrak{p}}$ of $\mathfrak{p}$, and let $q =
q_{\mathfrak{p}}$ denote the cardinality of the residue field
$\kappa_{\mathfrak{p}}$. Let $\lbrace L(a) \rbrace_{a \in
\kappa_{\mathfrak{p}}}$ denote the set of sublattices of index
$q$ of $\mathcal{O}_{F_{\mathfrak{p}}} \oplus \varpi_{\mathfrak{p}}
\mathcal{O}_{F_{\mathfrak{p}}}$, minus the sublattice
$\varpi_{\mathfrak{p}} \left( \mathcal{O}_{F_{\mathfrak{p}}} \oplus
\mathcal{O}_{F_{\mathfrak{p}}} \right).$ (Note that there are exactly
$q$ lattices in this set, as there are exactly $q+1$
sublattices of ``distance $1$'' away from
$\mathcal{O}_{F_{\mathfrak{p}}} \oplus \varpi_{\mathfrak{p}}
\mathcal{O}_{F_{\mathfrak{p}}}$  -- see \cite[$\S$ II.2]{Vi} or
$(\ref{distance})$ below). We consider translates of 
the pair \begin{align*} \left( \mathcal{O}_{F_{\mathfrak{p}}} \oplus
\mathcal{O}_{F_{\mathfrak{p}}},  \mathcal{O}_{F_{\mathfrak{p}}} \oplus
\varpi_{\mathfrak{p}}\mathcal{O}_{F_{\mathfrak{p}}} \right),\end{align*}
which is stabilized by the Iwahori subgroup $I_{\mathfrak{p}}$.
Let us now arrange the representatives $\lbrace \sigma_a 
\rbrace_{a \in \kappa_{\mathfrak{p}}}$ so that \begin{align*}
\sigma_a \left(\mathcal{O}_{F_{\mathfrak{p}}} \oplus
\mathcal{O}_{F_{\mathfrak{p}}} \right) &=  
\mathcal{O}_{F_{\mathfrak{p}}} \oplus \varpi_{\mathfrak{p}}
\mathcal{O}_{F_{\mathfrak{p}}} = L(a)\\ 
\sigma_a \left(\mathcal{O}_{F_{\mathfrak{p}}} \oplus
\varpi_{\mathfrak{p}}\mathcal{O}_{F_{\mathfrak{p}}} \right) 
&= L'(a).\end{align*} We shall then consider 
the pairs of lattice translates given by $\left( L(a), 
L'(a)\right)$. \end{remark} \end{remark}

\begin{remark}[Strong approximation.]

Let $F_{+}$ denote the totally positive elements of $F$, i.e. the
elements of $F$ whose image under any real embedding of $F$ is
greater than $0$. Let $\lbrace x_i \rbrace_{i=1}^{h}$ be
a set of representatives for the narrow class group
\begin{align*} \mathfrak{Cl}_F = F_{+}^{\times} \backslash \widehat{F}^{\times}
/\widehat{\mathcal{O}}_{F}^{\times} \end{align*} of $F$, such that $\left(
x_i\right)_{\mathfrak{p}} = 1$ for each $i = 1, \ldots h$.

\begin{lemma} Given $H \subset \widehat{B}^{\times}$ any compact open subgroup,
there is a bijection \begin{align} \label{SA}
\coprod_{i=1}^{h} B^{\times} \xi_i
B_{\mathfrak{p}}^{\times} H  \cong \widehat{B}^{\times}.
\end{align} Here, each $\xi_i$ is an element of 
$\widehat{B}^{\times}$ such that $(\xi_i)_{\mathfrak{p}}=1$ and $\nrd(\xi_i) = x_i$.\end{lemma}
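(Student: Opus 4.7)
The approach combines two standard tools: the reduced norm $\nrd : \widehat{B}^\times \to \widehat{F}^\times$ and Eichler's strong approximation theorem for $B^1 = \ker(\nrd)$. Recall that $B$ is totally definite at every archimedean place and split at $\mathfrak{p}$, so that $\nrd(B^\times) = F_+^\times$, $\nrd(B_{\mathfrak{p}}^\times) = F_{\mathfrak{p}}^\times$, and, for a standard level structure $H = \prod_v R_v^\times$ of the shape introduced above, $\nrd(H) = \widehat{\mathcal{O}}_F^\times$. Adelically, $\nrd : \widehat{B}^\times \to \widehat{F}^\times$ is surjective (by local surjectivity at split, archimedean, and finite ramified places via local class field theory).

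First I would apply strong approximation. Because $B^1_v$ is compact at every archimedean place but $B^1_{\mathfrak{p}} \cong \SL(F_{\mathfrak{p}})$ is noncompact, Eichler's strong approximation theorem with respect to the nonempty set $\{\mathfrak{p}\}$ yields the decomposition
\[
\widehat{B}^1 \;=\; B^1 \cdot B^1_{\mathfrak{p}} \cdot (H \cap \widehat{B}^1).
\]

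Next, I would use this to show that the reduced norm induces a bijection
\[
B^\times \backslash \widehat{B}^\times / B_{\mathfrak{p}}^\times H \;\longrightarrow\; F_+^\times \backslash \widehat{F}^\times / \bigl(F_{\mathfrak{p}}^\times \widehat{\mathcal{O}}_F^\times\bigr).
\]
Surjectivity of this induced map is immediate from the adelic surjectivity of $\nrd$. For injectivity, if $b_1, b_2 \in \widehat{B}^\times$ have reduced norms in the same target class, one modifies $b_2$ by elements of $B^\times$, $B_{\mathfrak{p}}^\times$, and $H$ of the appropriate reduced norms (using the surjectivity statements recorded above) to reduce to the case $\nrd(b_1) = \nrd(b_2)$. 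Then $b_1^{-1} b_2 \in \widehat{B}^1 = B^1 \cdot B^1_{\mathfrak{p}} \cdot (H \cap \widehat{B}^1) \subseteq B^\times \cdot B_{\mathfrak{p}}^\times \cdot H$, and rewriting this product using the triviality of $(\xi_i)_{\mathfrak{p}}$ places $b_2$ in the double coset of $b_1$. Finally, the normalization $(x_i)_{\mathfrak{p}} = 1$ identifies the target with the narrow class group $\mathfrak{Cl}_F$, so the $h$ elements $\xi_i$ with $\nrd(\xi_i) = x_i$ provide a complete system of representatives, yielding the asserted decomposition $\widehat{B}^\times = \coprod_{i=1}^h B^\times \xi_i B_{\mathfrak{p}}^\times H$.

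\textbf{Main obstacle.} The technical subtlety lies in the injectivity step: strong approximation furnishes $b_1^{-1} b_2$ as a product $\delta \epsilon h$ with $\delta \in B^1$, $\epsilon \in B^1_{\mathfrak{p}}$, $h \in H \cap \widehat{B}^1$, but in an order that does not manifestly match the nested shape $B^\times \cdot b_1 \cdot B_{\mathfrak{p}}^\times \cdot H$ of the target double coset. The rearrangement exploits the crucial feature $(\xi_i)_{\mathfrak{p}} = 1$: this guarantees that $\xi_i$ commutes with every element of the $\mathfrak{p}$-adic factor $B_{\mathfrak{p}}^\times$ and acts transparently on the $\mathfrak{p}$-local factor of $H$, which is what permits the strong-approximation factors to be conjugated into their required positions.
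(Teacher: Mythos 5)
Your plan recovers exactly what the paper's one-line citation to Vign\'eras (strong approximation, Th.~4.3, applied to the norm theorem, Th.~4.1, both in III.4) is alluding to, so the route is the same as the paper's. That said, two issues deserve comment, one of which is a genuine error.

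First, the identification of the target double-coset space with $\mathfrak{Cl}_F$ is wrong. The norm map gives a bijection onto $F_+^\times \backslash \widehat{F}^\times / \bigl(F_{\mathfrak{p}}^\times \widehat{\mathcal{O}}_F^\times\bigr)$, which is $\mathfrak{Cl}_F$ \emph{modulo the class generated by $\mathfrak{p}$}; imposing $(x_i)_{\mathfrak{p}}=1$ does not restore the quotiented-out information, since the target group does not depend on the choice of representatives. Consequently the $h$ cosets $B^\times \xi_i B_{\mathfrak{p}}^\times H$ do cover $\widehat{B}^\times$ (surjectivity of your map still holds), but they need not be pairwise distinct unless $[\mathfrak{p}]$ is trivial in $\mathfrak{Cl}_F$, so the $\coprod$ cannot be read as a disjoint union. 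The paper itself signals this implicitly a few lines below the lemma by switching to representatives of the modified class group $\mathfrak{Cl}_F/F_{\mathfrak{p}}^\times$. Second, the rearrangement step: you correctly single out $(\xi_i)_{\mathfrak{p}}=1$ as the mechanism, but the stated factorization $b_1^{-1}b_2 \in B^1 B^1_{\mathfrak{p}}(H\cap\widehat{B}^1)$ puts the global factor on the wrong side of $b_1$. The clean fix is to apply strong approximation with the \emph{conjugated} compact open subgroup $\xi_iH\xi_i^{-1}$, factor $b\xi_i^{-1}=\delta\epsilon h''$ with $h''\in \xi_iH\xi_i^{-1}$, and then use $(\xi_i)_{\mathfrak{p}}=1$ to commute $\xi_i$ past $\epsilon\in B^1_{\mathfrak{p}}$, landing in $B^\times\xi_iB_{\mathfrak{p}}^\times H$. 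Finally, your $\nrd(H)=\widehat{\mathcal{O}}_F^\times$ is valid for the $\mathfrak{M}$-level structures the paper actually uses, but not for a literally arbitrary compact open $H$; that mismatch originates in the lemma's own wording, not in your argument.
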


\begin{proof}
This is a standard result, it can be deduced from the strong
approximation theorem (\cite[$\S$ III.4, Th\'eor\`eme 4.3]{Vi})
applied to the norm theorem (\cite[$\S$ III.4, Th\'eor\`eme 4.1]{Vi}). \end{proof} 
Let us now fix a set of representatives $(x_i)$ for the modified class group 
$\mathfrak{Cl}_F/F_{\mathfrak{p}}^{\times}$ such that $(x_i)_{\mathfrak{p}}=1$ 
for each $i=1, \ldots, h$. Let us then choose $\xi_i \in \widehat{B}^{\times}$ such
that $(\xi_i)_{\mathfrak{p}}=1$ and $\nrd(\xi_i)=x_i$ for each $i$.
Now, for each $i$, let us define a subgroup \begin{align}\label{gamma} 
\Gamma_{i} = \lbrace b \in B^{\times}:
b_v \in \left( \xi_{i,v}\right) H_v \left( \xi _{i, v}\right)^{-1} 
\text{ for all $v \nmid \mathfrak{p}$} \rbrace \subset B^{\times}.\end{align} 
Observe that each $\Gamma_{i}$ embeds discretely into
$B_{\mathfrak{p}}^{\times}$. (That is, each $\Gamma_i$ can be described 
as the intersection of $B^{\times}$ with some conjugate of $H$. This
intersection can then be embedded discretely into $B_{S}^{\times}
= \prod_{v \in S} B_v^{\times}$, where $S$ denotes the set of archimedean 
places $\Sigma_F$ of $F$ along with the prime $\mathfrak{p}$. On the 
other hand, recall that $B$ is ramified at all of the archimedean places 
of $F$, and hence that $B^{\times}_{\Sigma_F}$ is compact. It is then follows from a 
standard fact in the theory of topological groups that the intersection
defining each $\Gamma_i$ embeds discretely into $B_{S}^{\times}$ modulo 
the compact subgroup $B_{\Sigma_F}^{\times}$. Hence, each $\Gamma_i$ 
embeds discretely into $ B_{\mathfrak{p}}^{\times}$). We may therefore 
view each $\Gamma_i$ as a discrete subgroup of $\GL(F_{\mathfrak{p}})$ 
via our fixed isomorphism $\iota_{\mathfrak{p}}: B_{\mathfrak{p}}^{\times} 
\cong \GL(F_{\mathfrak{p}})$.

\begin{corollary} We have a bijection
\begin{align}\label{SA2} \coprod_{i=1}^{h} \Gamma_{i}\backslash
B_{\mathfrak{p}}^{\times}/H_{\mathfrak{p}} &\cong
B^{\times} \backslash \widehat{B}^{\times}/H \end{align} via the map
given on each component by $[g] \mapsto [\xi_i \cdot g].$ That is, for
$g \in B_{\mathfrak{p}}^{\times}$, the class of $g$ in each
component $\Gamma_i \backslash B_{\mathfrak{p}}^{\times}/
H_{\mathfrak{p}}$ goes to the class of $\xi_i \cdot g $ in $B^{\times}
\backslash \widehat{B}^{\times}/H.$ \end{corollary}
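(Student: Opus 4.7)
The plan is to reduce the statement directly to the strong approximation bijection (\ref{SA}) provided by the preceding lemma, verifying well-definedness, surjectivity, and injectivity by elementary adelic bookkeeping. The definition of $\Gamma_i$ in (\ref{gamma}) is crafted precisely so that changing representatives on the $B_{\mathfrak{p}}^{\times}$-side matches changing representatives on the $\widehat{B}^{\times}$-side; my main task is to make this matching explicit.

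For well-definedness of the map $[g] \mapsto [\xi_i g]$, I would take $g_1, g_2 \in B_{\mathfrak{p}}^{\times}$ with $g_2 = \gamma g_1 h_{\mathfrak{p}}$ for $\gamma \in \Gamma_i$ and $h_{\mathfrak{p}} \in H_{\mathfrak{p}}$, and produce $h = (h_v)_v \in H$ with
\[
\gamma \cdot \xi_i \cdot g_1 \cdot h \;=\; \xi_i \cdot g_2 \quad \text{in } \widehat{B}^{\times},
\]
where $g_j$ is regarded as supported at $\mathfrak{p}$ via $(\xi_i)_{\mathfrak{p}}=1$. At $\mathfrak{p}$, put $h_{\mathfrak{p}}$ itself; at $v\neq\mathfrak{p}$, set $h_v = \xi_{i,v}^{-1}\gamma_v^{-1}\xi_{i,v}$, which lies in $H_v$ by the very definition (\ref{gamma}) of $\Gamma_i$. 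This shows that the composite with the inclusion into $\widehat{B}^{\times}$ descends to the desired quotient.

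Surjectivity is immediate from (\ref{SA}): any $\widehat{b}\in\widehat{B}^{\times}$ lies in some $B^{\times}\xi_i B_{\mathfrak{p}}^{\times} H$, hence is equivalent to $\xi_i g$ for some $g\in B_{\mathfrak{p}}^{\times}$. For injectivity, suppose $\xi_i g_1$ and $\xi_j g_2$ lie in the same double coset of $B^{\times}\backslash \widehat{B}^{\times}/H$. The disjointness in (\ref{SA}) forces $i=j$; write $\xi_i g_2 = \gamma\,\xi_i g_1\,h$ with $\gamma\in B^{\times}$ and $h\in H$. Reading off the $\mathfrak{p}$-component gives $g_2 = \gamma_{\mathfrak{p}} g_1 h_{\mathfrak{p}}$, while reading off any $v\neq\mathfrak{p}$ component gives $\gamma_v = \xi_{i,v} h_v^{-1}\xi_{i,v}^{-1} \in \xi_{i,v}H_v\xi_{i,v}^{-1}$, i.e.\ $\gamma\in\Gamma_i$. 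Hence $[g_1]=[g_2]$ in $\Gamma_i\backslash B_{\mathfrak{p}}^{\times}/H_{\mathfrak{p}}$.

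The only potentially subtle point, and the one I would want to exhibit cleanly, is the symmetry between the condition (\ref{gamma}) defining $\Gamma_i$ and the cocycle-like identity $\gamma_v = \xi_{i,v} h_v^{-1}\xi_{i,v}^{-1}$ that emerges from the injectivity argument; these are literally the same condition, which is why the bijection holds on the nose. Once that is pointed out, the rest is a direct unwinding of definitions and the lemma.
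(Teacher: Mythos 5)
Your proof is correct and supplies exactly the verification the paper leaves implicit — the paper states the corollary without proof, treating it as an immediate consequence of the strong approximation bijection (\ref{SA}) and the definition (\ref{gamma}) of $\Gamma_i$. Your component-wise bookkeeping (matching $h_{\mathfrak{p}}$ at $\mathfrak{p}$ and $h_v = \xi_{i,v}^{-1}\gamma_v^{-1}\xi_{i,v}$ at $v\neq\mathfrak{p}$ for well-definedness, and reading off $\gamma_v \in \xi_{i,v}H_v\xi_{i,v}^{-1}$ for injectivity) is precisely the unwinding the author has in mind, and your closing remark correctly identifies that the definition of $\Gamma_i$ is rigged to make this work on the nose.
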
 

Fix $\Phi \in S_2^B(H; \mathcal{O})$. Via a fixed bijection $(\ref{SA2})$, 
we may view $\Phi$ as an $h$-tuple of functions $\left( \phi^i 
\right)_{i=1}^{h}$ on $\GL(F_{\mathfrak{p}})$ that 
satisfy the relation \begin{align}\label{relation} 
\phi^i(\gamma b h z ) = \phi^i (b) \end{align} for all $\gamma \in
\Gamma_{i}$, $b \in B_{\mathfrak{p}}^{\times} \cong \GL(F_{\mathfrak{p}})$, 
$h \in H_{\mathfrak{p}}$, and $z \in \widehat{F}^{\times}_{\mathfrak{p}}$. 
The identification $(\ref{SA2})$ then allows us to describe 
$\Phi \in S_2^B(H;\mathcal{O})$ as a vector of functions on homothety
classes of full rank lattices of $F_{\mathfrak{p}} \oplus F_{\mathfrak{p}}$ in the following way.
Let $\mathcal{L}\left(F_{\mathfrak{p}}\oplus F_{\mathfrak{p}}\right)$ denote 
the set of homothety classes of full rank lattices of $F_{\mathfrak{p}} \oplus F_{\mathfrak{p}}$.

\begin{remark}[Case I: $\mathfrak{p}\nmid\mathfrak{M}.$]
Fix $\Phi \in \mathcal{S}_2^B(H ; \mathcal{O})$ as above, associated to a 
vector of functions $\left(\phi^i \right)_{i=1}^{h}$. For
each $\phi^i$, we can define a corresponding function 
$c_{\phi^i}$ on $\mathcal{L}\left( F_{\mathfrak{p}} 
\oplus F_{\mathfrak{p}}\right)$ as follows. Given a class
$[L] \in \mathcal{L} \left( F_{\mathfrak{p}} \oplus F_{\mathfrak{p}}\right)$ with
fixed representative $L$, let $g_L \in \GL(F_{\mathfrak{p}})$ be any matrix such that 
$g_L\left( \mathcal{O}_{F_{\mathfrak{p}} } \oplus
\mathcal{O}_{F_{\mathfrak{p}} }\right) = L$. Let
\begin{align*}c_{\phi^i}([L]) = \phi^i(g_L).\end{align*} 
This definition does not depend on choice of matrix representative $g_L$, as 
a simple check using relation $(\ref{relation})$ reveals. 
It also follows from $(\ref{relation})$ that 
\begin{align}\label{relation2} c_{\phi^i}([\gamma L]) =
c_{\phi^i}([L])\end{align} for all $\gamma \in \Gamma_i,$
for instance by taking $g_{\gamma L} = \gamma  g_L$, then seeing
that ${\phi}^i(g_{\gamma L}) = {\phi}^i(\gamma g_L ) = 
\phi^i(g_L)$.\end{remark}

\begin{remark}[Case II: $\mathfrak{p} \mid \mathfrak{M}$.] 
Consider $\Phi \in \mathcal{S}_2^B(H; \mathcal{O})$ as above, associated 
to a vector of functions $ \left(\phi^i \right)_{i=1}^{h}$. 
For each $\phi^i$, we can define a corresponding function 
$c_{\phi^i}$ on pairs $([L_1], [L_2])$ of classes in $\mathcal{L}\left( 
F_{\mathfrak{p}} \oplus F_{\mathfrak{p}}\right)$ (with a fixed pair of representatives 
$(L_1, L_2)$) satisfying the property that $L_2 \subset L_1$ with index $q$. That is, 
fix such a pair of classes $([L_1], [L_2])$. Let $g_L \in \GL(F_{\mathfrak{p}})$ 
be any matrix such that \begin{align*} g_L\left( \mathcal{O}_{F_{\mathfrak{p}}}
\oplus \mathcal{O}_{F_{\mathfrak{p}}}\right) &= L_1 \\
g_L \left(\mathcal{O}_{F_{\mathfrak{p}}} \oplus \varpi_{\mathfrak{p}} 
\mathcal{O}_{F_{\mathfrak{p}}} \right) &= L_2.\end{align*}
Let \begin{align*} c_{\phi^i}([L_1], [L_2]) = 
\phi^i(g_L).\end{align*} As before,
a simple check using relation $(\ref{relation})$ reveals that
$c_{\phi^i}([L_1], [L_2])$ does not depend on choice of matrix representative 
$g_L$. It also follows from $(\ref{relation})$ that \begin{align} 
c_{\phi^i}([\gamma L_1], [\gamma L_2]) = c_{\phi^i}([L_1], [L_2])\end{align} 
for all $\gamma \in \Gamma_i.$\end{remark} Let us write ${\bf{c}}_{\Phi}$ 
to denote the vector of lattice class functions $\left( c_{\phi^i} \right)_{i=1}^{h}$ associated to 
$\left({\phi}^i \right)_{i=1}^{h}$ in either case on the level 
$\mathfrak{M}$. We then have the following description of Hecke operators for 
these functions:

\begin{remark}[Case I: $\mathfrak{p} \nmid \mathfrak{M}.$]
The Hecke operator $T_{\mathfrak{p}}$ on ${\bf{c}}_{\Phi}$ is given by
\begin{align}\label{latticeT_p} c_{T_{\mathfrak{p}}\Phi} ([L])
= \sum_{L' \subset L} c_{\Phi}([L']).\end{align} Here, fixing representatives, the 
sum runs over the $q+1$ sublattices $L'$ of $L$ having
index equal to $q$.\end{remark}

\begin{remark}[Case II: $\mathfrak{p} \mid \mathfrak{M}$.]
The Hecke operator $U_{\mathfrak{p}}$ on ${\bf{c}}_{\Phi}$ is 
given by  \begin{align} \label{latticeU_p} 
 c_{U_{\mathfrak{p}}\Phi} ([L_1], [L_2]) = \sum_{L' \subset L_2} 
c_{\Phi}([L_2], [L']).\end{align} Here, fixing representatives, the sum runs over the
sublattices $L' \subset L_2$ of index $q$,
minus the sublattice $L'$ corresponding to 
$\varpi_{\mathfrak{p}}L_2$.\end{remark}
\end{remark}

\begin{remark}[The Bruhat-Tits tree of $\PGL(F_{\mathfrak{p}})$.]

The description above of modular forms $\Phi \in S_{2}^{B}(H; \mathcal{O})$ 
as functions ${\bf{c}}_{\Phi}$ on the set of homothety classes 
$\mathcal{L}\left( F_{\mathfrak{p}} \oplus F_{\mathfrak{p}} \right)$
has the following combinatorial interpretation. To fix ideas, 
let $\mathcal{M}(\M(F_{\mathfrak{p}}))$ denote the set of maximal
orders of $\M(F_{\mathfrak{p}})$. The group $\PGL(F_{\mathfrak{p}}) \cong
B_{\mathfrak{p}}^{\times}/F_{\mathfrak{p}}^{\times}$ acts on by
conjugation on $\mathcal{M}(\M(F_{\mathfrak{p}}))$. 

\begin{lemma}\label{transitive}
The conjugation action of $\PGL(F_{\mathfrak{p}})$ on
$\mathcal{M}(\M(F_{\mathfrak{p}}))$ is transitive. Moreover, there is an
identification of $\PGL(F_{\mathfrak{p}})/\PGL(\mathcal{O}_{F_{\mathfrak{p}}})$
with $\mathcal{M}(\M(F_{\mathfrak{p}}))$.
\end{lemma}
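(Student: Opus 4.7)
The plan is to realize the bijection $\PGL(F_{\mathfrak{p}})/\PGL(\mathcal{O}_{F_{\mathfrak{p}}}) \cong \mathcal{M}(\M(F_{\mathfrak{p}}))$ by sending a class $g \cdot \PGL(\mathcal{O}_{F_{\mathfrak{p}}})$ to the conjugate maximal order $g \cdot \M(\mathcal{O}_{F_{\mathfrak{p}}}) \cdot g^{-1}$. To make this work, the first step is to recall the standard classification: every maximal $\mathcal{O}_{F_{\mathfrak{p}}}$-order in $\M(F_{\mathfrak{p}})$ is of the form $\End_{\mathcal{O}_{F_{\mathfrak{p}}}}(L)$ for some full-rank $\mathcal{O}_{F_{\mathfrak{p}}}$-lattice $L \subset F_{\mathfrak{p}} \oplus F_{\mathfrak{p}}$, and two lattices yield the same endomorphism ring if and only if they are homothetic. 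A reference for this (citing e.g.\ \cite[$\S$ II.2]{Vi}) will suffice.

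Given this, the second step is to translate the conjugation action into the action on lattices. If $g \in \GL(F_{\mathfrak{p}})$ and $L_0 = \mathcal{O}_{F_{\mathfrak{p}}} \oplus \mathcal{O}_{F_{\mathfrak{p}}}$, then
\begin{align*}
g \cdot \End_{\mathcal{O}_{F_{\mathfrak{p}}}}(L_0) \cdot g^{-1} = \End_{\mathcal{O}_{F_{\mathfrak{p}}}}(gL_0).
\end{align*}
Since $\GL(F_{\mathfrak{p}})$ acts transitively on the set of full-rank lattices of $F_{\mathfrak{p}} \oplus F_{\mathfrak{p}}$ (any ordered basis of a lattice is the image of the standard basis under some invertible matrix), and since every maximal order has the form $\End(L)$ by the first step, the conjugation action of $\PGL(F_{\mathfrak{p}})$ on $\mathcal{M}(\M(F_{\mathfrak{p}}))$ is transitive.

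For the identification, the third step is to compute the stabilizer of the base point $\M(\mathcal{O}_{F_{\mathfrak{p}}}) = \End_{\mathcal{O}_{F_{\mathfrak{p}}}}(L_0)$. An element $g \in \GL(F_{\mathfrak{p}})$ normalizes $\M(\mathcal{O}_{F_{\mathfrak{p}}})$ under conjugation if and only if $gL_0$ is homothetic to $L_0$, i.e.\ $gL_0 = \alpha L_0$ for some $\alpha \in F_{\mathfrak{p}}^{\times}$; equivalently, $g \in F_{\mathfrak{p}}^{\times} \cdot \GL(\mathcal{O}_{F_{\mathfrak{p}}})$. Modding out by the centre $F_{\mathfrak{p}}^{\times}$ shows that the stabilizer in $\PGL(F_{\mathfrak{p}})$ is exactly $\PGL(\mathcal{O}_{F_{\mathfrak{p}}})$, yielding the desired bijection by the orbit-stabilizer theorem.

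No step poses a serious obstacle; the main thing to be careful about is the classification of maximal orders in $\M(F_{\mathfrak{p}})$ as endomorphism rings of lattices, which is where all the content lies. Once that is in hand, everything else is a direct orbit-stabilizer computation.
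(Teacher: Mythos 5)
Your proof is correct, and it is exactly the standard argument from \cite[$\S$ II.2]{Vi} that the paper defers to by reference rather than spelling out: classify maximal orders as $\End_{\mathcal{O}_{F_{\mathfrak{p}}}}(L)$ up to homothety, observe that conjugation by $g$ corresponds to $L \mapsto gL$, deduce transitivity from the transitive action of $\GL(F_{\mathfrak{p}})$ on lattices, and compute the stabilizer of the standard lattice to be $F_{\mathfrak{p}}^{\times}\GL(\mathcal{O}_{F_{\mathfrak{p}}})$. This same lattice-order dictionary is also the content of the paper's Lemma~\ref{lattices-orders}, so your writeup is entirely consistent with how the paper organizes the material.
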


\begin{proof} This is a standard result, see for instance
\cite[$\S$ II.2]{Vi}. \end{proof} Let $\mathcal{T}_{\mathfrak{p}} =
(\mathcal{V}_{\mathfrak{p}}, \mathcal{E}_{\mathfrak{p}})$ denote the
{\it{Bruhat-Tits tree of $B_{\mathfrak{p}}^{\times}/F_{\mathfrak{p}}^{\times} \cong
\PGL(F_{\mathfrak{p}})$}}, by which we mean the tree of maximal
orders of $B_{\mathfrak{p}} \cong \M(F_{\mathfrak{p}}) $, such that

\begin{itemize}

\item[(i)] The vertex set $\mathcal{V}_{\mathfrak{p}}$ is indexed by maximal
orders of $\M(F_{\mathfrak{p}})$.

\item[(ii)] The edgeset $\mathcal{E}_{\mathfrak{p}}$ is indexed by Eichler
orders of level $\mathfrak{p}$ of $\M(F_{\mathfrak{p}}).$

\item[(iii)] The edgeset $\mathcal{E}_{\mathfrak{p}}$ has an {\it{orientation}}, 
i.e. a pair of maps \begin{align*}s, t: \mathcal{E}_{\mathfrak{p}} \longrightarrow
\mathcal{V}_{\mathfrak{p}}, ~~~ \mathfrak{e} \mapsto
(s(\mathfrak{e}), t(\mathfrak{e}))\end{align*} that assigns to each edge
$\mathfrak{e} \in \mathcal{E}_{\mathfrak{p}}$ a {\it{source}}
$s(\mathfrak{e})$ and a {\it{target}} $t(\mathfrak{e})$. Once such a
choice of orientation is fixed, let us write
$\mathcal{E}_{\mathfrak{p}}^{*}$ to denote the ``directed" edgeset
of $\mathcal{T}_{\mathfrak{p}}$.\end{itemize} Hence, we obtain
from Lemma \ref{transitive} the following immediate

\begin{corollary} The induced conjugation action of $\PGL(F_{\mathfrak{p}})$ on
$\mathcal{T}_{\mathfrak{p}}$ is transitive. Moreover, there is an 
identification of
$\PGL(F_{\mathfrak{p}})/\PGL(\mathcal{O}_{F_{\mathfrak{p}}})$ with 
$\mathcal{V}_{\mathfrak{p}}$.\end{corollary}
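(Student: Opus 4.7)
The plan is essentially to observe that this corollary is a formal consequence of Lemma \ref{transitive} together with the definition of the Bruhat--Tits tree, so the proof amounts to unwinding the identifications.

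First I would recall that by construction the vertex set $\mathcal{V}_{\mathfrak{p}}$ is defined to be indexed by the maximal orders of $\M(F_{\mathfrak{p}})$, so as a set there is a tautological bijection $\mathcal{V}_{\mathfrak{p}} \cong \mathcal{M}(\M(F_{\mathfrak{p}}))$. Under the isomorphism $\iota_{\mathfrak{p}}\colon B_{\mathfrak{p}} \cong \M(F_{\mathfrak{p}})$, the conjugation action of $B_{\mathfrak{p}}^{\times}/F_{\mathfrak{p}}^{\times} \cong \PGL(F_{\mathfrak{p}})$ on $B_{\mathfrak{p}}$ preserves the notion of maximal order, and thus induces an action on $\mathcal{V}_{\mathfrak{p}}$. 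Transitivity of this induced action is then just the restatement of the first assertion of Lemma \ref{transitive}.

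Next I would produce the homogeneous-space identification. Fix the standard maximal order $R_0 = \M(\mathcal{O}_{F_{\mathfrak{p}}}) \in \mathcal{V}_{\mathfrak{p}}$ and compute its stabilizer under conjugation. An element $g \in \GL(F_{\mathfrak{p}})$ satisfies $g R_0 g^{-1} = R_0$ iff $g$ normalizes $R_0$; a standard fact (see \cite[$\S$ II.2]{Vi}) is that the normalizer of $R_0$ in $\GL(F_{\mathfrak{p}})$ equals $F_{\mathfrak{p}}^{\times} \GL(\mathcal{O}_{F_{\mathfrak{p}}})$, so that the stabilizer of $R_0$ in $\PGL(F_{\mathfrak{p}})$ is $\PGL(\mathcal{O}_{F_{\mathfrak{p}}})$. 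Combined with transitivity, the orbit--stabilizer theorem yields the required bijection
\begin{align*}
\PGL(F_{\mathfrak{p}})/\PGL(\mathcal{O}_{F_{\mathfrak{p}}}) \longrightarrow \mathcal{V}_{\mathfrak{p}}, \quad g \mapsto g R_0 g^{-1}.
\end{align*}

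There is no real obstacle here: the substance of the argument is already contained in Lemma \ref{transitive}, and the only additional input is the computation of the normalizer of $\M(\mathcal{O}_{F_{\mathfrak{p}}})$, which is classical. The corollary should therefore be presented as essentially a restatement of the lemma in the language of the Bruhat--Tits tree, with a one-line remark that the stabilizer computation matches the one already used to prove Lemma \ref{transitive}.
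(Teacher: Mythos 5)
Your proposal is correct and takes essentially the same route as the paper: the paper presents this corollary as an immediate consequence of Lemma \ref{transitive} once one recalls that $\mathcal{V}_{\mathfrak{p}}$ is by definition indexed by $\mathcal{M}(\M(F_{\mathfrak{p}}))$, which is precisely the unwinding you carry out. Your added explicit computation of the stabilizer (the normalizer of $\M(\mathcal{O}_{F_{\mathfrak{p}}})$ being $F_{\mathfrak{p}}^{\times}\GL(\mathcal{O}_{F_{\mathfrak{p}}})$) is just making visible what Lemma \ref{transitive} already packages.
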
 Now, recall that we let 
$\mathcal{L}(F_{\mathfrak{p}}\oplus F_{\mathfrak{p}})$ denote the
set of homothety classes of full rank lattices of $F_{\mathfrak{p}} \oplus
F_{\mathfrak{p}}$.

\begin{lemma}\label{lattices-orders} There is a bijection $\mathcal{L}(F_{\mathfrak{p}}
\oplus F_{\mathfrak{p}}) \cong \mathcal{M}(\M(F_{\mathfrak{p}}).$ 
\end{lemma}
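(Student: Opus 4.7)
The plan is to produce the bijection by showing both sides are homogeneous $\PGL(F_{\mathfrak{p}})$-sets with the same stabilizer, so that each is canonically in bijection with $\PGL(F_{\mathfrak{p}})/\PGL(\mathcal{O}_{F_{\mathfrak{p}}})$, which by Lemma \ref{transitive} already realizes $\mathcal{M}(\M(F_{\mathfrak{p}}))$. The map itself is the natural one: send the homothety class $[L]$ of a full rank lattice $L \subset F_{\mathfrak{p}} \oplus F_{\mathfrak{p}}$ to its $\mathcal{O}_{F_{\mathfrak{p}}}$-endomorphism ring
\begin{align*}
[L] \longmapsto \End_{\mathcal{O}_{F_{\mathfrak{p}}}}(L) = \{ \varphi \in \M(F_{\mathfrak{p}}) : \varphi(L) \subseteq L \},
\end{align*}
viewed as a subring of $\M(F_{\mathfrak{p}})$ via its tautological action on $F_{\mathfrak{p}} \oplus F_{\mathfrak{p}}$.

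First I would check that this assignment is well-defined and lands in $\mathcal{M}(\M(F_{\mathfrak{p}}))$: replacing $L$ by $\lambda L$ with $\lambda \in F_{\mathfrak{p}}^\times$ obviously does not change the endomorphism ring, and for $L = \mathcal{O}_{F_{\mathfrak{p}}} \oplus \mathcal{O}_{F_{\mathfrak{p}}}$ one recovers the standard maximal order $\M(\mathcal{O}_{F_{\mathfrak{p}}})$. Next, I would equip both sides with a left action of $\GL(F_{\mathfrak{p}})$: on $\mathcal{L}(F_{\mathfrak{p}} \oplus F_{\mathfrak{p}})$ by $g \cdot [L] = [gL]$, and on $\mathcal{M}(\M(F_{\mathfrak{p}}))$ by $g \cdot R = g R g^{-1}$. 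The identity $\End(gL) = g \End(L) g^{-1}$ is immediate, so the map is $\GL(F_{\mathfrak{p}})$-equivariant; since the scalar matrices act trivially on both sides, it descends to a map of $\PGL(F_{\mathfrak{p}})$-sets.

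Next I would verify transitivity on the left hand side: given any two full rank lattices $L, L'$, the elementary divisor theorem over the PID $\mathcal{O}_{F_{\mathfrak{p}}}$ produces $g \in \GL(F_{\mathfrak{p}})$ with $gL = L'$, so the $\GL(F_{\mathfrak{p}})$-action on lattices is transitive, and hence so is the $\PGL(F_{\mathfrak{p}})$-action on homothety classes. Transitivity on $\mathcal{M}(\M(F_{\mathfrak{p}}))$ is precisely Lemma \ref{transitive}. To finish, I would identify stabilizers at the basepoints: the stabilizer in $\GL(F_{\mathfrak{p}})$ of $[\mathcal{O}_{F_{\mathfrak{p}}} \oplus \mathcal{O}_{F_{\mathfrak{p}}}]$ consists of $g$ with $g(\mathcal{O}_{F_{\mathfrak{p}}} \oplus \mathcal{O}_{F_{\mathfrak{p}}}) = \lambda (\mathcal{O}_{F_{\mathfrak{p}}} \oplus \mathcal{O}_{F_{\mathfrak{p}}})$ for some $\lambda$, hence modulo centre it equals $\PGL(\mathcal{O}_{F_{\mathfrak{p}}})$; similarly the normalizer of $\M(\mathcal{O}_{F_{\mathfrak{p}}})$ in $\GL(F_{\mathfrak{p}})$ is $F_{\mathfrak{p}}^\times \cdot \GL(\mathcal{O}_{F_{\mathfrak{p}}})$ by the second part of Lemma \ref{transitive}, giving the same image $\PGL(\mathcal{O}_{F_{\mathfrak{p}}})$ in $\PGL(F_{\mathfrak{p}})$. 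By the orbit-stabilizer theorem both sides are $\PGL(F_{\mathfrak{p}})$-equivariantly in bijection with $\PGL(F_{\mathfrak{p}})/\PGL(\mathcal{O}_{F_{\mathfrak{p}}})$, and the equivariant map between them must therefore be a bijection.

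There is no serious obstacle here; the only mildly delicate point is the stabilizer computation on the order side, which relies on the fact that conjugation by $g$ preserves $\M(\mathcal{O}_{F_{\mathfrak{p}}})$ exactly when $g \in F_{\mathfrak{p}}^\times \cdot \GL(\mathcal{O}_{F_{\mathfrak{p}}})$. This is exactly what Lemma \ref{transitive} supplies, so one can also simply quote it to shorten the argument.
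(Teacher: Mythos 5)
Your map is the same one the paper uses, namely $[L] \mapsto \End_{\mathcal{O}_{F_{\mathfrak{p}}}}(L)$, but the verification that it is a bijection takes a different route. The paper simply quotes Vign\'eras \cite[II~\S~2, Lemme~2.1(1)]{Vi} for the classification of maximal orders of $\End_{F_{\mathfrak{p}}}(V)$ as exactly the rings $\End_{\mathcal{O}_{F_{\mathfrak{p}}}}(L)$, with distinct homothety classes giving distinct orders, and concludes. You instead give a self-contained orbit-stabilizer argument: both sides are transitive $\PGL(F_{\mathfrak{p}})$-sets (transitivity on lattices via elementary divisors, transitivity on maximal orders via Lemma~\ref{transitive}), the map is equivariant because $\End(gL)=g\End(L)g^{-1}$, and the stabilizers at the two natural basepoints both come out to $\PGL(\mathcal{O}_{F_{\mathfrak{p}}})$ --- the one on the order side being exactly what the second assertion of Lemma~\ref{transitive} records. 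This gets you surjectivity and injectivity in one shot without re-invoking the Vign\'eras classification. Your argument is correct and arguably tighter, since it leans only on Lemma~\ref{transitive}, which is already stated in the paper; the paper's version is shorter but defers the real content to the reference. Worth noting: your stabilizer computation on the lattice side (that $g[\mathcal{O}^2]=[\mathcal{O}^2]$ forces $g\in F_{\mathfrak{p}}^\times\GL(\mathcal{O}_{F_{\mathfrak{p}}})$) is the one elementary fact you supply that the paper leaves implicit in its appeal to ``correspond in a natural way.''
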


\begin{proof} See \cite[II $\S 2$]{Vi}. Let $V = F_{\mathfrak{p}}\oplus
F_{\mathfrak{p}}$, viewed as a $2$-dimensional $F_{\mathfrak{p}}$-vector
space. Fixing a basis $\lbrace z_1, z_2 \rbrace$ of $V$, we obtain the 
standard identification \begin{align*} \M(F_{\mathfrak{p}}) &\longrightarrow
\End_{F_{\mathfrak{p}}}(V) \\ \gamma &\longmapsto \left( v \mapsto \gamma
\cdot v \right).\end{align*} Here, $v = xz_1 + yz_2$ denotes any element of 
$V$, and $\gamma \cdot v$ the usual matrix operation of $\gamma$ 
on $(x, y)$. Hence, we obtain a bijective correspondence between 
maximal orders of $\M(F_{\mathfrak{p}})$ and maximal orders of 
$\End_{F_{\mathfrak{p}}}(V)$. Now, it is well known that the maximal
orders of $\End_{F_{\mathfrak{p}}}(V)$ take the form of 
$\End_{F_{\mathfrak{p}}}(L)$, with $L$ any full rank lattice of $F_{\mathfrak{p}}
\oplus F_{\mathfrak{p}}$, as shown for instance in \cite[II $\S$ 2, Lemme 2.1(1)]{Vi}.
Since the rings $\End_{F_{\mathfrak{p}}}(L)$ correspond in a natural way to 
classes $[L] \in \mathcal{L}(F_{\mathfrak{p}} \oplus F_{\mathfrak{p}})$, the claim follows. 
\end{proof} Hence, fixing an isomorphism $\mathcal{L}(F_{\mathfrak{p}}
\oplus F_{\mathfrak{p}}) \cong \mathcal{M}(\M(F_{\mathfrak{p}})$, we can associate
to each class $[L] \in \mathcal{L}(F_{\mathfrak{p}} \oplus F_{\mathfrak{p}})$ 
a unique vertex $\mathfrak{v}_{[L]} \in \mathcal{V}_{\mathfrak{p}}$. Let us also for future
reference make the following definition (cf. \cite[$\S$ II.2]{Vi}). Let 
$[L], [L'] \in \mathcal{L}(F_{\mathfrak{p}}\oplus F_{\mathfrak{p}})$ 
be a pair of classes with fixed representatives $(L, L')$ such that $L \supset L'$. Fix 
bases $(z_1, z_2)$ and $(\pi_{\mathfrak{p}}^a z_1, \pi_{\mathfrak{p}}^b z_2 )$
for $L$ and $L'$ respectively. We define the
{\it{distance}} $d(\mathfrak{v}_{[L]}, \mathfrak{v}_{[L']})$ between
the associated vertices $\mathfrak{v}_{[L]}$ and $ \mathfrak{v}_{[L']}$ 
by \begin{align}\label{distance}
d(\mathfrak{v}_{[L]}, \mathfrak{v}_{[L']}) = \left| b - a \right|.\end{align}
This definition does not depend on choice of representatives or bases. Let
$\mathfrak{v}_0$ denote the vertex of $\mathcal{V}_{\mathfrak{p}}$
corresponding to the maximal order
$\M(\mathcal{O}_{F_{\mathfrak{p}}})$. The {\it{length}} of any
vertex $\mathfrak{v} \in \mathcal{V}_{\mathfrak{p}}$ is then given by 
the distance $d(\mathfrak{v}, \mathfrak{v}_0)$. 

Now, the subgroups $\Gamma_i \subset B_{\mathfrak{p}}^{\times}$ defined in $(\ref{gamma})$ 
above act naturally by conjugation on $\mathcal{T}_{\mathfrak{p}}$, and the quotient graphs 
$\Gamma_i \backslash \mathcal{T}_{\mathfrak{p}}$ are finite. We may therefore consider the 
disjoint union of finite quotient graphs \begin{align*}\coprod_{i = 1}^{h} \Gamma_i \backslash 
\mathcal{T}_{\mathfrak{p}} = \left(\coprod_{i = 1}^{h} \Gamma_i \backslash
\mathcal{V}_{\mathfrak{p}}, \coprod_{i = 1}^{h} \Gamma_i
\backslash \mathcal{E}_{\mathfrak{p}}^{*} \right).\end{align*} Moreover, we may consider
the following spaces of modular forms defined on these disjoint unions of finite quotient graphs.
\begin{definition} Given a ring $\mathcal{O}$, let $\mathcal{S}_2\left( \coprod_{i=1}^{h} \Gamma_i
\backslash \mathcal{T}_{\mathfrak{p}} ; \mathcal{O} \right)$ denote the space
of vectors $\left( {\phi}^i  \right)_{i=1}^h$ of $\mathcal{O}$-valued, $\left(\Gamma_i
\right)_{i=1}^{h}$-invariant functions on $\mathcal{T}_{\mathfrak{p}}
= \left( \mathcal{V}_{\mathfrak{p}}, \mathcal{E}_{\mathfrak{p}}^{*}
\right).$ Here, it is understood that $\Phi \in \mathcal{S}_2\left(
\coprod_{i=1}^{h}\Gamma_i \backslash
\mathcal{T}_{\mathfrak{p}}; \mathcal{O} \right)$ is a function on
$\coprod_{i=1}^{h}\Gamma_i \backslash
\mathcal{V}_{\mathfrak{p}}$ if $\mathfrak{p} \nmid \mathfrak{M}$,
or a function on $\coprod_{i=1}^{h}\Gamma_i \backslash
\mathcal{E}_{\mathfrak{p}}^{*}$ if $\mathfrak{p} \mid \mathfrak{M}$.
\end{definition} 

\begin{proposition}\label{SA3} 
Let $\mathcal{O}$ be any ring. We have a bijection of
spaces \begin{align}\label{bij}  \mathcal{S}_2\left(
\coprod_{i=1}^{h} \Gamma_i \backslash
\mathcal{T}_{\mathfrak{p}} ; \mathcal{O} \right) &\cong
\mathcal{S}_{2}^{B}(H; \mathcal{O}).\end{align}
\end{proposition}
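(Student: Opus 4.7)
The proof is a splicing-together of the three preceding ingredients: (i) the strong-approximation bijection (\ref{SA2}), (ii) the lattice descriptions in Case I and Case II, and (iii) the Bruhat--Tits dictionary supplied by Lemma \ref{lattices-orders} and the Corollary to Lemma \ref{transitive}. Most of the work has already been done in the exposition above; what remains is to assemble everything and exhibit an inverse.

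First I would use (\ref{SA2}) to rewrite a form $\Phi \in \mathcal{S}_{2}^{B}(H;\mathcal{O})$ as an $h$-tuple $(\phi^{i})_{i=1}^{h}$ of $\mathcal{O}$-valued functions on $B_{\mathfrak{p}}^{\times} \cong \GL(F_{\mathfrak{p}})$ satisfying (\ref{relation}): left $\Gamma_{i}$-invariance, right $H_{\mathfrak{p}}$-invariance, and right $F_{\mathfrak{p}}^{\times}$-invariance (the last being extracted from the central-character triviality built into (\ref{transformation})). This step is already a bijection of $\mathcal{O}$-modules, so the problem reduces to identifying each $\phi^{i}$ with a $\Gamma_{i}$-invariant function on $\mathcal{T}_{\mathfrak{p}}$.

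Next I would dispose of the two cases separately. In Case I, $H_{\mathfrak{p}} \cong \PGL(\mathcal{O}_{F_{\mathfrak{p}}})$, so each $\phi^{i}$ descends to $\Gamma_{i} \backslash \PGL(F_{\mathfrak{p}}) / \PGL(\mathcal{O}_{F_{\mathfrak{p}}})$, which by the Corollary after Lemma \ref{transitive} combined with Lemma \ref{lattices-orders} is canonically identified with $\Gamma_{i} \backslash \mathcal{V}_{\mathfrak{p}}$; this simply recovers the lattice-class function $c_{\phi^{i}}$. In Case II, $H_{\mathfrak{p}} \cong I_{\mathfrak{p}}$, and I would identify $\Gamma_{i} \backslash \PGL(F_{\mathfrak{p}}) / (I_{\mathfrak{p}} F_{\mathfrak{p}}^{\times}/F_{\mathfrak{p}}^{\times})$ with $\Gamma_{i} \backslash \mathcal{E}_{\mathfrak{p}}^{\ast}$ by checking that $I_{\mathfrak{p}}$ is precisely the stabilizer of the \emph{oriented} edge joining $\mathfrak{v}_{[\mathcal{O}_{F_{\mathfrak{p}}} \oplus \mathcal{O}_{F_{\mathfrak{p}}}]}$ to $\mathfrak{v}_{[\mathcal{O}_{F_{\mathfrak{p}}} \oplus \varpi_{\mathfrak{p}}\mathcal{O}_{F_{\mathfrak{p}}}]}$, so that by transitivity the coset space parametrizes $\mathcal{E}_{\mathfrak{p}}^{\ast}$. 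This is exactly the pair-of-lattice-classes description of Case II.

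For the inverse map, starting from a tree-level vector $(\phi^{i}) \in \mathcal{S}_{2}(\coprod_{i} \Gamma_{i} \backslash \mathcal{T}_{\mathfrak{p}} ; \mathcal{O})$, I would lift each $\phi^{i}$ to a $\Gamma_{i}$-invariant, right-$F_{\mathfrak{p}}^{\times} H_{\mathfrak{p}}$-invariant function on $B_{\mathfrak{p}}^{\times}$ via the identifications above, then glue using (\ref{SA2}) into a single function on $\widehat{B}^{\times}$. A direct check using $(\xi_{i})_{\mathfrak{p}} = 1$ and the definition (\ref{gamma}) of $\Gamma_{i}$ as $B^{\times} \cap \xi_{i} H \xi_{i}^{-1}$ away from $\mathfrak{p}$ verifies the transformation rule (\ref{transformation}), and the two constructions are visibly mutually inverse on representatives.

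The main obstacle is the Case~II orientation check: one has to verify that $I_{\mathfrak{p}}$ really stabilizes the ordered pair and not merely the underlying unoriented edge, so that its coset space parametrizes $\mathcal{E}_{\mathfrak{p}}^{\ast}$ rather than $\mathcal{E}_{\mathfrak{p}}$. This follows from the fact that $I_{\mathfrak{p}}$ preserves each of the two lattice classes individually (unlike its normalizer in $\PGL(F_{\mathfrak{p}})$, which swaps them). A secondary bookkeeping point, which I would flag explicitly, is to match the quotient by $\nrd$-factoring functions on the source with the appropriate quotient on the tree side; since $\nrd(\widehat{B}^{\times})$ lies in $\widehat{F}^{\times}$ and indexes the $h$ components in (\ref{SA2}), factoring through $\nrd$ corresponds to being constant on each connected component of $\coprod_{i} \Gamma_{i} \backslash \mathcal{T}_{\mathfrak{p}}$.
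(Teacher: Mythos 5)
Your argument follows the same route as the paper's own proof: strong approximation via $(\ref{SA2})$, the lattice-class description ${\bf c}_{\Phi}$, and the dictionary of Lemma \ref{lattices-orders} identifying lattice classes with maximal orders, i.e.\ with vertices. The paper's proof is terser (it does not spell out the Case~II orientation check or the explicit inverse), so your expansion is harmless and arguably more complete. One small correction: your claim in Case~I that $H_{\mathfrak{p}} \cong \PGL(\mathcal{O}_{F_{\mathfrak{p}}})$ should read $\GL(\mathcal{O}_{F_{\mathfrak{p}}})$; this is cosmetic because one is working modulo center throughout.

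The one substantive point to adjust is your final bookkeeping remark on the $\nrd$-quotient. A function $\Phi(b)=f(\nrd(b))$ that is left $B^{\times}$-, right $H$-, and centrally invariant does not generally descend to something constant on each component $\Gamma_{i}\backslash\mathcal{T}_{\mathfrak{p}}$. On the $\mathfrak{p}$-adic side, $\det$ on $\GL(F_{\mathfrak{p}})$ taken modulo $\mathcal{O}_{F_{\mathfrak{p}}}^{\times}$ (from $H_{\mathfrak{p}}$) and $(F_{\mathfrak{p}}^{\times})^{2}$ (from the center) survives to $F_{\mathfrak{p}}^{\times}/\mathcal{O}_{F_{\mathfrak{p}}}^{\times}(F_{\mathfrak{p}}^{\times})^{2}\cong \mathbf{Z}/2$, which is exactly the \emph{parity of vertex length} (or the orientation class of an edge in Case~II). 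So the $\nrd$-factoring functions pass to tree-side functions that depend only on this parity, and whether they further collapse to constants on $\Gamma_{i}\backslash\mathcal{T}_{\mathfrak{p}}$ depends on whether $\ord_{\mathfrak{p}}\nrd(\Gamma_{i})$ meets the odd integers. The correct statement of the quotient is therefore by length-parity functions, not by component-indicator functions. This is a genuine loose end, but the paper's own proof is equally silent on it, so it is a sharpening rather than a repair.
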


\begin{proof} Fix a function $\Phi \in \mathcal{S}_2^B(H;\mathcal{O})$.
Recall that by $(\ref{SA2})$, we have a bijection 
\begin{align*} \coprod_{i = 1}^{h}\Gamma_i \backslash 
B_{\mathfrak{p}}^{\times}/H_{\mathfrak{p}} &\longrightarrow B^{\times} 
\backslash \widehat{B}^{\times}/H,\end{align*} and moreover that we can 
associate to $\Phi$ a vector of functions ${\bf{c}}_{\Phi}= \left( c_{{\phi}^i}\right)$ 
on the set of homothety classes $\mathcal{L}(F_{\mathfrak{p}}\oplus F_{\mathfrak{p}}).$ The
vector ${\bf{c}}_{\Phi}$ is then clearly determined uniquely by the transformation
law for $\Phi$ in view of this bijection. Hence, by Lemma
\ref{lattices-orders}, we may view ${\bf{c}}_{\Phi}$ as a vector of 
functions on the set of maximal orders $\mathcal{M}(\M(F_{\mathfrak{p}}))$. 
Since we saw above that each of the functions $c_{{\phi}^i}$ is 
$\Gamma_i$-invariant, the claim follows. \end{proof} 
Let us for ease of notation write $\Phi$ to denote both a function in the space of 
modular forms $\mathcal{S}_2^B(H;\mathcal{O}) $, as well as its corresponding 
vector of functions $c_{\Phi}$ on maximal orders of $\M(F_{\mathfrak{p}})$ 
in $\mathcal{S}_2\left(\coprod_{i=1}^{h} \Gamma_i \backslash 
\mathcal{T}_{\mathfrak{p}} ; \mathcal{O} \right).$ We can then write down
the following combinatorial description of the Hecke operators $T_{\mathfrak{p}}$
and $U_{\mathfrak{p}}$ defined above, dividing into cases on the level structure.

\begin{remark} [Case I: $\mathfrak{p} \nmid \mathfrak{M}$.] We obtain
the following description of the operator $T_{\mathfrak{p}}$. Let
$\Phi({\bf{\mathfrak{v}}})$ denote the $h$-tuple of
functions $\left(c_{\phi^i}(\mathfrak{v})
\right)_{i=1}^{h} $ evaluated at a fixed vertex $\mathfrak{v}$
of $\mathcal{V}_{\mathfrak{p}}$. By $(\ref{latticeT_p})$, we obtain the description
\begin{align} \left( T_{\mathfrak{p}} \Phi \right) ({\bf{\mathfrak{v}}}) =
\sum_{ \mathfrak{w} \rightarrow \mathfrak{v} }
c_{\Phi}(\mathfrak{w}).
\end{align} Here, the sum ranges over all $q+1$ vertices
$\mathfrak{w}$ adjacent to $\mathfrak{v}$. \end{remark}

\begin{remark} [Case II: $\mathfrak{p} \mid \mathfrak{M}$.] We obtain
the following description of the operator $U_{\mathfrak{p}}$. Let
$\Phi({\bf{\mathfrak{e}}})$ denote the $h$-tuple of
functions $\left(c_{\phi^i}(\mathfrak{e})\right)_{i=1}^{h} $ evaluated 
at a fixed edge $\mathfrak{e}$. By $(\ref{latticeU_p})$, we obtain the description
\begin{align} \left( U_{\mathfrak{p}} \Phi \right) ({\bf{\mathfrak{e}}}) =
\sum_{s({\bf{\mathfrak{e}}}') = t({\bf{\mathfrak{e}}})}
c_{\Phi}({\bf{\mathfrak{e}}}'). \end{align} Here, the sum runs over
the $q$ edges $\mathfrak{e}' \in \mathcal{E}_{\mathfrak{p}}^{*}$
such that $s(\mathfrak{e}') = t(\mathfrak{e})$, minus the edge 
$\overline{\mathfrak{e}}$ obtained by reversing orientation of $\mathfrak{e}$. \end{remark}
\end{remark}

\begin{remark}[The Jacquet-Langlands correspondence.]

We now give a combinatorial version of the theorem of Jacquet and 
Langlands \cite{JL} under the bijection $(\ref{bij})$. Let us first review 
some background from the theory of Hilbert modular forms. We refer 
the reader to \cite{Ga}, \cite{Ge} or \cite{Go} for basic definitions
and background.

\begin{remark}[Hilbert modular forms.] Let ${\bf{f}} \in \mathcal{S}_2
(\mathfrak{N})$ be a cuspidal Hilbert modular form of parallel weight two, 
level $\mathfrak{N} \subset \mathcal{O}_F$, and trivial Nebentypus with 
associated vector of functions $\left( f_i \right)_{i = 1}^{h}$ on the $d$-fold 
product $\mathfrak{H}^d$ of the complex upper-half plane. We write 
$\mathcal{S}_2(\mathfrak{N})$ to denote this space of functions, which 
comes equipped with an action by classically or adelically defined Hecke 
operators for each finite prime $v$ of $F$, which we denote by $T_v$ in 
an abuse of notation. Let us also write $U_v$ to denote these operators 
when $v$ divides $\mathfrak{N}$. Fix a set of representatives 
$\left( t_i \right)_{i = 1}^{h}$ for the narrow
class group $\mathfrak{Cl}_F$. By the weak approximation 
theorem, we may choose these representatives in such way that 
$\left( t_i \right)_{\infty}=1$ for each $i$. Given a vector 
$z = (z_1, \ldots, z_d) \in F \otimes {\bf{R}}$, let us also 
define operations \begin{align*}\operatorname{Tr}(z)
= \sum_{i=1}^d z_i, ~~~ \mathcal{N}(z) = \prod_{i=1}^dz_i. 
\end{align*} Let us then define $e_F(z) = \exp(2\pi i 
\operatorname{Tr}(z)).$ It can be deduced from the transformation
law satisfied by ${\bf{f}}$ that each $f_i$ admits a Fourier series
expansion \begin{align*}f_i(z) = \sum_{\mu \in t_i \atop \mu \gg o} 
a_i(\mu)e_F(\mu z),\end{align*} where the sum over $t_i$ means 
the sum over ideals generated by the id\`eles $t_i$, and
$\mu \gg 0 $ means that $\mu$ is strictly positive. 
Now, any fractional ideal $\mathfrak{m}$ of $F$ can be written
uniquely as some $(\mu)t_i^{-1}$ with $\mu \in t_i$ totally positive.
We use this to define a normalized Fourier coefficient 
$a_{\mathfrak{m}}({\bf{f}})$ of ${\bf{f}}$ at $\mathfrak{m}$ in the 
following way. That is, let
\begin{align*} a_{\mathfrak{m}}({\bf{f}}) &= \begin{cases}
a_i(\mu) \cdot \mathcal{N}(t_i)^{-1} &\text{ if $\mathfrak{m}$ is 
integral}\\ 0 &\text{ else}.\end{cases}\end{align*} Equivalently,
we can define the normalized Fourier coefficient 
$a_{\mathfrak{m}}({\bf{f}})$ using the adelic Fourier expansion of ${\bf{f}}$.
That is, let $\vert \cdot \vert_{{\bf{A}}_F}$ denote the adelic norm, and
let $\chi_F: {\bf{A}}_F/F \longrightarrow {\bf{C}}^{\times}$ denote the 
standard additive character whose restriction to archimedean components
agrees with the restriction to archimedean components of the character
$e_F$. We then have the following adelic Fourier series expansion at 
infinity:\begin{align*}
f\left( \left( \begin{array} {cc} y& x \\ 0 & 1 \end{array} \right)\right) &= 
\vert y \vert_{{\bf{A}}_F} \cdot \sum_{\xi \in F \atop \xi \gg 0}
a_{\xi y \mathcal{O}_f}({\bf{f}})\cdot e_F(\xi {\bf{i}} y_{\infty})\cdot \chi_F(\xi x). \end{align*}
Here, ${\bf{i}}$ denotes the $d$-tuple $(i, \ldots, i)$, and $y_{\infty}$
denotes the archimedean component of $y$. Hence, we could also take this to be 
our definition of normalized Fourier coefficients. A Hilbert modular form 
${\bf{f}} \in \mathcal{S}_2(\mathfrak{N})$ is said to be a {\it{normalized eigenform}} 
if it is a simultaneous eigenvector for all of the Hecke operators $T_v$ and $U_v$ with
$a_{\mathcal{O}_F}({\bf{f}})=1$. In this case, we write
\begin{itemize}
\item $T_v {\bf{f}} = a_v({\bf{f}}) \cdot {\bf{f}}$ for all $v
\nmid \mathfrak{N}.$
\item $U_v {\bf{f}} = \alpha_v({\bf{f}}) \cdot {\bf{f}}$ for all $v
\mid \mathfrak{N}.$\end{itemize} A normalized eigenform ${\bf{f}} \in 
\mathcal{S}_2(\mathfrak{N})$ is said to be a {\it{newform}} if there does 
not exist any other form ${\bf{g}} \in \mathcal{S}_2(\mathfrak{M})$ with 
$\mathfrak{M} \mid \mathfrak{N}$ and $\mathfrak{M} \supsetneq \mathfrak{N}$ 
such that $a_{\mathfrak{n}}({\bf{f}}) = a_{\mathfrak{n}}({\bf{g}})$ for all
integral ideals $\mathfrak{n}$ of $F$ prime to $\mathfrak{N}$.
An eigenform ${\bf{f}} \in \mathcal{S}_2(\mathfrak{N})$
is {\it{$\mathfrak{p}$-ordinary}} if its $T_{\mathfrak{p}}$-eigenvalue
$a_{\mathfrak{p}}({\bf{f}})$ is a $p$-adic unit with respect to any fixed
embedding $\overline{{\bf{Q}}} \rightarrow \overline{{\bf{Q}}}_p$. In this case, there
exists a $p$-adic unit root $\alpha_{\mathfrak{p}}({\bf{f}})$ to the Hecke
polynomial \begin{align}\label{char} x^2 - a_{\mathfrak{p}}({\bf{f}})x + q,\end{align}
where $q$ denotes the cardinality of the residue field at $\mathfrak{p}$.
Let us now fix an integral ideal $\mathfrak{N} \subset \mathcal{O}_F$
as in $(\ref{N})$, with underlying integral ideal $\mathfrak{N}_0 
\subset \mathcal{O}_F$. Fix a Hilbert modular eigenform 
${\bf{f}}_0 \in \mathcal{S}_2(\mathfrak{N}_0)$ that is new at all primes
dividing the level $\mathfrak{N}_0$. Let $\mathfrak{N}_0 \subset \mathcal{O}_F$ 
be an integral ideal that is not divisible by $\mathfrak{p}$. 
The {\it{$\mathfrak{p}$-stabilization}}
${\bf{f}} \in \mathcal{S}_2(\mathfrak{N})$ of ${\bf{f}}_0 \in 
\mathcal{S}_2(\mathfrak{N}_0)$ is the
eigenform given by \begin{align} \label{p-stab} {\bf{f}} =
{\bf{f}}_0 - \beta_{\mathfrak{p}}({\bf{f}}_0)\cdot \left(
T_{\mathfrak{p}}{\bf{f}}_0\right),
\end{align} where $\beta_{\mathfrak{p}}({\bf{f}}_0)$ denotes the
non-unit root to $(\ref{char})$. This is a $\mathfrak{p}$-ordinary
eigenform in $\mathcal{S}_2(\mathfrak{N})$ with
$U_{\mathfrak{p}}$-eigenvalue $\alpha_{\mathfrak{p}} 
= \alpha_{\mathfrak{p}}({\bf{f}}_0)$. \end{remark}

Let us now consider an eigenform ${\bf{f}}
\in \mathcal{S}_2(\mathfrak{N})$ given by ${\bf{f}}_0$ if 
$\mathfrak{p}$ divides $\mathfrak{N}_0$, or given by the 
$\mathfrak{p}$-stabilization of ${\bf{f}}_0$ if $\mathfrak{p}$ 
does not divide $\mathfrak{N}_0$. We have the following 
quaternionic description of ${\bf{f}}$ in either case.
Let $B$ denote the totally definite quaternion algebra over $F$ of
discriminant $\mathfrak{N}^{-}$. To be consistent with the notations above, 
let us also write $U_v$ for the Hecke operators $T_v$ on $S_2^B(H; \mathcal{O})$ 
when $v \mid \mathfrak{N}^+$.

\begin{proposition}[Jacquet-Langlands]
\label{p-JLC} Given an eigenform ${\bf{f}} \in
\mathcal{S}_2(\mathfrak{N})$ as above, there exists a
function $\Phi \in \mathcal{S}_2\left( \coprod_{i=1}^{h}
\Gamma_i \backslash \mathcal{T}_{\mathfrak{p}}; {\bf{C}} \right)$
such that

\begin{itemize}
\item $T_v \Phi = a_v({\bf{f}}) \cdot \Phi$ for all $v
\nmid \mathfrak{N}$
\item $U_v \Phi = \alpha_v({\bf{f}}) \cdot \Phi$ for all $v
\mid \mathfrak{N}^{+}$
\item $U_{\mathfrak{p}} \Phi = \alpha_{\mathfrak{p}} \cdot \Phi.$
\end{itemize} This function is unique up to multiplication by
non-zero complex numbers. Conversely, given an eigenform $\Phi \in
\mathcal{S}_2\left( \coprod_{i=1}^{h} \Gamma_i \backslash
\mathcal{T}_{\mathfrak{p}}; {\bf{C}} \right)$, there exists an
eigenform ${\bf{f}} \in \mathcal{S}_2(\mathfrak{N})$ such that

\begin{itemize}
\item $T_v {\bf{f}} = a_v(\Phi) \cdot {\bf{f}}$ for all $v
\nmid \mathfrak{N}$
\item $U_v {\bf{f}} = \alpha_v(\Phi) \cdot {\bf{f}}$ for all $v
\mid \mathfrak{N}^{+}$
\item $U_{\mathfrak{p}} {\bf{f}} = \alpha_{\mathfrak{p}}(\Phi) \cdot {\bf{f}}.$
\end{itemize} Here, $a_v(\Phi)$ denotes the eigenvalue for $T_v$ of
$\Phi$ if $v \nmid \mathfrak{N}$, and $\alpha_v(\Phi)$ the
eigenvalue for $U_v$ of $\Phi$ if $v \mid \mathfrak{N}$.
\end{proposition}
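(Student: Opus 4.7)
The plan is to chain together the classical Jacquet-Langlands theorem \cite{JL} with the strong approximation bijection established in Proposition \ref{SA3}, and then match Hecke eigenvalues place by place.

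First, I would associate to ${\bf{f}}$ its cuspidal automorphic representation $\pi = \pi_{{\bf{f}}}$ of $\GL({\bf{A}}_F)$, which by hypothesis has trivial central character. Since $B$ is ramified exactly at the archimedean places of $F$ and at the finite primes dividing $\mathfrak{N}^{-}$, and since $\pi_v$ is a discrete series at each such place (by the parity hypothesis on $\mathfrak{N}^{-}$, ensuring the ramification set of $B$ matches the discrete-series set of $\pi$), the Jacquet-Langlands theorem produces an automorphic representation $\pi' = \JL^{-1}(\pi)$ of $(B \otimes {\bf{A}}_F)^{\times}$ whose local components at split places agree with those of $\pi$; strong multiplicity one guarantees uniqueness of $\pi'$.

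Next, I would extract an $H$-fixed vector in $\pi'$, where $H \subset \widehat{B}^{\times}$ is the $\mathfrak{M}$-level structure with $\mathfrak{M} = \mathfrak{p}\mathfrak{N}^{+}$: maximal at $v \mid \mathfrak{N}^{-}$, Eichler of level $v^{\ord_v(\mathfrak{N}^{+})}$ at split $v \nmid \mathfrak{p}$, and Iwahori at $\mathfrak{p}$ via $\iota_{\mathfrak{p}}$. Local uniqueness of the new vector at each place yields a one-dimensional space of such fixed vectors, producing a function $\Phi \in \mathcal{S}_2^B(H; {\bf{C}})$ unique up to scalar. Proposition \ref{SA3} then transports $\Phi$ to an element of $\mathcal{S}_2\left(\coprod_i \Gamma_i \backslash \mathcal{T}_{\mathfrak{p}}; {\bf{C}}\right)$. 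For the Hecke eigenvalues, note that for $v \nmid \mathfrak{N}$ the operators $T_v$ on both sides are defined by the same $\GL(\mathcal{O}_{F_v})$-double coset via $\iota_v$, so the Satake parameters match and $T_v \Phi = a_v({\bf{f}}) \Phi$; for $v \mid \mathfrak{N}^{+}$ the Iwahori-level $U_v$ transfers similarly. At $\mathfrak{p}$, the eigenvalue equals $\alpha_{\mathfrak{p}}$ directly when $\mathfrak{p} \mid \mathfrak{N}_0$, and otherwise via the defining relation $(\ref{p-stab})$ for the $\mathfrak{p}$-stabilization, engineered so that the unit root acts. The converse direction runs the same chain in reverse, starting from a Hecke eigensystem on the tree, lifting via $(\ref{bij})$ to an automorphic form on $B^{\times}$, and applying $\JL$ to produce a matching form in $\mathcal{S}_2(\mathfrak{N})$.

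The main obstacle is the bookkeeping at $\mathfrak{p}$: one must verify that the combinatorial $U_{\mathfrak{p}}$ described in Case II, as a sum over edges $\mathfrak{e}'$ with $s(\mathfrak{e}') = t(\mathfrak{e})$ and $\mathfrak{e}' \neq \overline{\mathfrak{e}}$, agrees under Proposition \ref{SA3} and the Iwahori decomposition $(\ref{U_pdecomp})$ with the automorphic $U_{\mathfrak{p}}$-action transferred from ${\bf{f}}$. This reduces to checking that the fixed orientation on $\mathcal{T}_{\mathfrak{p}}$ and the lattice-pair correspondence from the preceding remarks are arranged so that the $q$ outgoing neighbors of an oriented edge match the $q$ Iwahori cosets; with this compatibility in place, the remaining identities at unramified and split ramified primes follow directly from the standard Satake/Iwahori correspondence.
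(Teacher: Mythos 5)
Your overall scaffolding matches the paper's: pass from ${\bf{f}}$ to the automorphic representation $\pi$, transfer to $B^{\times}$ via Jacquet--Langlands, pick out a vector fixed by the level $H$, and then move to the tree via Proposition \ref{SA3}. The identification of Hecke eigenvalues at $v \nmid \mathfrak{N}$ and $v \mid \mathfrak{N}^+$ is also handled the same way in both arguments.

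However, there is a genuine gap at the prime $\mathfrak{p}$ in the $\mathfrak{p}$-stabilized case. You assert that ``local uniqueness of the new vector at each place yields a one-dimensional space of such fixed vectors,'' but this fails at $v = \mathfrak{p}$ when $\mathfrak{p} \nmid \mathfrak{N}_0$: there $\pi_{\mathfrak{p}}$ is spherical (unramified principal series), and the Iwahori-fixed subspace $\pi_{\mathfrak{p}}^{I_{\mathfrak{p}}}$ is \emph{two}-dimensional, not one. So the naive transfer does not by itself produce a well-defined $U_{\mathfrak{p}}$-eigenvector $\Phi$ at Iwahori level. The paper's proof spends essentially all of its work on exactly this point: starting from the Jacquet--Langlands transfer $\Phi_0 = (\phi_0^i)$ at level $\mathfrak{N}^+$ (no $\mathfrak{p}$), it forms the two edge-functions $\phi_s^i(\mathfrak{e}) = \phi_0^i(s(\mathfrak{e}))$ and $\phi_t^i(\mathfrak{e}) = \phi_0^i(t(\mathfrak{e}))$ that span this two-dimensional space, computes the $U_{\mathfrak{p}}$-action on each via relations $(\ref{r1})$--$(\ref{r2})$, and then verifies by a direct calculation using $\alpha_{\mathfrak{p}}^2 = a_{\mathfrak{p}}({\bf{f}}_0)\alpha_{\mathfrak{p}} - q$ that the combination $\phi^i = \phi_s^i - \alpha_{\mathfrak{p}} \phi_t^i$ is a $U_{\mathfrak{p}}$-eigenvector with eigenvalue $\alpha_{\mathfrak{p}}$. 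Your sentence ``otherwise via the defining relation $(\ref{p-stab})$ for the $\mathfrak{p}$-stabilization, engineered so that the unit root acts'' gestures at this but does not produce the eigenvector; it is precisely the $\mathfrak{p}$-stabilization that must be carried out on the quaternionic side, not merely invoked. In the case $\mathfrak{p} \mid \mathfrak{N}_0$ your argument is fine, since $\pi_{\mathfrak{p}}$ is then special and the Iwahori-fixed subspace is indeed one-dimensional, which is the paper's Case I.

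A secondary point: your closing paragraph flags the compatibility of the combinatorial $U_{\mathfrak{p}}$ with the adelic one as ``the main obstacle,'' but in the paper this is essentially definitional once the lattice-pair description of oriented edges from $(\ref{U_pdecomp})$ and $(\ref{latticeU_p})$ has been set up, and it is the two-dimensionality of the local fixed space, not that compatibility, that requires work. Redirecting your effort there, and writing out the explicit $\phi_s, \phi_t$ construction, would close the argument.
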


\begin{proof} We generalize the argument given for
$F={\bf{Q}}$ in \cite[Proposition 1.3]{BD}.

\begin{remark}[Case I:] Suppose first that $\mathfrak{p}$
divides $\mathfrak{N}_0 = \mathfrak{N}$, hence that 
${\bf{f}} \in \mathcal{S}_2(\mathfrak{N})$ is new at
$\mathfrak{p}$. Let $R_0 \subset B$ be an Eichler order of level
$\mathfrak{p} \mathfrak{N}^{+}$. The theorem of Jacquet-Langlands
\cite{JL} then associates to ${\bf{f}}$ an eigenform $\Phi \in
\mathcal{S}_2^B(\widehat{R}_0^{\times}; {\bf{C}})$ such that

\begin{itemize}
\item[] $T_v \Phi = a_v({\bf{f}}) \cdot \Phi$ for all $v
\nmid \mathfrak{N}$
\item[] $U_v \Phi = \alpha_v({\bf{f}}) \cdot \Phi$ for all $v
\mid \mathfrak{N}^{+}$
\item[] $U_{\mathfrak{p}} \Phi = \alpha_{\mathfrak{p}} \cdot \Phi$.
\end{itemize} This function $\Phi$ is unique up to multiplication by
nonzero elements of ${\bf{C}}$. It can be identified with a function
in $\mathcal{S}_2\left( \coprod_{i=1}^{h} \Gamma_i
\backslash \mathcal{T}_{\mathfrak{p}}; {\bf{C}}\right)$ by
Proposition
\ref{SA3}.\end{remark}

\begin{remark}[Case II:] Suppose that $\mathfrak{p}$ does not divide
$\mathfrak{N}_0$, hence that ${\bf{f}} \in \mathcal{S}_2(\mathfrak{N})$ 
is not new at $\mathfrak{p}$.  Let $R_0 \subset B$ be an Eichler order 
of level $\mathfrak{N}^{+}$. The theorem of Jacquet-Langlands \cite{JL} 
then associates to ${\bf{f}}$ an eigenform $\Phi_0 \in \mathcal{S}_2^B
(\widehat{R}_0^{\times}; {\bf{C}})$ such that

\begin{itemize}
\item[] $T_v \Phi_0 = a_v({\bf{f}}) \cdot \Phi_0$ for all $v \nmid \mathfrak{N}_0$.
\item[] $U_v \Phi_0 = \alpha_v({\bf{f}}) \cdot \Phi_0$ for all $v
\mid \mathfrak{N}^{+}.$ \end{itemize} This function $\Phi_0$ is also unique 
up to multiplication by nonzero elements of ${\bf{C}}$. It can be identified with 
a function in $\mathcal{S}_2\left( \coprod_{i=1}^{h} \Gamma_i \backslash 
\mathcal{V}_{\mathfrak{p}}; {\bf{C}} \right)$ by Proposition \ref{SA3}. Now, we 
can construct from this function $\Phi_0 = \left(\phi_0^i \right)_{i=1}^{h}$ the 
following functions of $\left( {\phi}^{i} \right)_{i=1}^h$ of $\mathcal{S}_2\left( 
\coprod_{i=1}^{h} \Gamma_i \backslash \mathcal{E}_{\mathfrak{p}}^{*}; 
{\bf{C}}\right).$ For each component $\phi_0^i$, define a pair of functions 
$\phi^i_s, \phi^{i}_t: \mathcal{E}_{\mathfrak{p}}^{*} \longrightarrow {\bf{C}}$ by the
rules \begin{align}\label{rules} \phi_{s}^{i}(\mathfrak{e}) =
\phi_{0}^{i}(s(\mathfrak{e})), ~~~~~ \phi_{t}^{i}(\mathfrak{e}) =
\phi_{0}^{i}(t(\mathfrak{e})). \end{align} We have by construction
that \begin{itemize}
\item[] $T_v \phi_s^i = a_v({\bf{f}}) \cdot \phi_s^i$ for all $v
\nmid \mathfrak{N}_0$.
\item[] $T_v \phi_t^i = a_v({\bf{f}}) \cdot \phi_t^i$ for all $v
\nmid \mathfrak{N}_0$.\end{itemize} Now, observe that
\begin{align}\label{r1}
\left( U_{\mathfrak{p}} \phi_s^i\right) (\mathfrak{e}) =
\sum_{s(\mathfrak{e}') = t(\mathfrak{e})}\phi_s^i(\mathfrak{e}') =
\sum_{s(\mathfrak{e}') = t(\mathfrak{e})}\phi_0^i(t(\mathfrak{e})) =
q \cdot \phi_t^i(\mathfrak{e}).
\end{align} On the other hand, observe that \begin{align}\label{r2}
\left( U_{\mathfrak{p}} \phi_t^i\right) (\mathfrak{e}) =
\sum_{s(\mathfrak{e}') = t(\mathfrak{e})} \phi_t^i(\mathfrak{e}) =
\left( T_{\mathfrak{p}} \phi_0^i \right) (t(\mathfrak{e})) -
\phi_0^i (s(\mathfrak{e})) = a_{\mathfrak{p}}({\bf{f}}_0) \cdot
\phi_t^i(\mathfrak{e}) - \phi_s^i(\mathfrak{e}).
\end{align} Let us
now define $\phi^i = \phi_s^i - \alpha_{\mathfrak{p}} \cdot
\phi_t^i. $ Using $(\ref{r1})$ and $(\ref{r2})$, we find that

\begin{align*} \left(U_{\mathfrak{p}} \phi^i \right)(\mathfrak{e})
&= \sum_{ s(\mathfrak{e}') = t(\mathfrak{e}) }
\phi_s^i(\mathfrak{e}') - \alpha_{\mathfrak{p}}
\cdot \phi_t^i(\mathfrak{e}')  \\
&= \sum_{s(\mathfrak{e}') = t(\mathfrak{e})}
\phi_0^i(s(\mathfrak{e}')) - \alpha_{\mathfrak{p}} \cdot
\sum_{s(\mathfrak{e}') = t(\mathfrak{e})} \phi_0^i(t(\mathfrak{e}'))
\\ &= q \cdot \phi_0^i(t(\mathfrak{e})) - \alpha_{\mathfrak{p}}
\left( a_{\mathfrak{p}}({\bf{f}}_0) \cdot \phi_0^i(t(\mathfrak{e}))
- \phi_0^{i}(s(\mathfrak{e})) \right) \\
&= \alpha_{\mathfrak{p}} \cdot \phi_s^i(\mathfrak{e}) - \left(
\alpha_{\mathfrak{p}} \cdot a_{\mathfrak{p}}({\bf{f}}_0) - q \right)
\cdot \phi_t^i(\mathfrak{e}) \\ &= \alpha_{\mathfrak{p}} \cdot
\left( \phi_s^i(\mathfrak{e}) - \alpha_{\mathfrak{p}} \cdot
\phi_t^i(\mathfrak{e}) \right) \\
&= \alpha_{\mathfrak{p}} \cdot \phi^i(\mathfrak{e}),\end{align*}
i.e. since $\alpha_{\mathfrak{p}}$ being a root of $(\ref{char})$
implies that $\alpha_{\mathfrak{p}}^2 = a_{\mathfrak{p}}({\bf{f}}_0)
\cdot \alpha_{\mathfrak{p}} - q$. It follows that $\phi^i$ is an
eigenvector for the Hecke operator $U_{\mathfrak{p}}$ with
eigenvalue $\alpha_{\mathfrak{p}}$. Let us then define $\Phi =
\left( \phi^i \right)_{i=1}^{h}$. Thus, $\Phi$ is a
function in the space $\mathcal{S}_2\left(
\coprod_{i=1}^{h} \Gamma_i \backslash
\mathcal{E}_{\mathfrak{p}}^{*} ; {\bf{C}} \right)$. It is an simultaneous
eigenvector for all of the operators $T_v$ with $v \nmid
\mathfrak{N}$, $U_v$ with $v \mid \mathfrak{N}^+$, and
$U_{\mathfrak{p}}$ having the prescribed eigenvalues. Moreover, it
is the unique such function up to multiplication by non-zero
elements of ${\bf{C}}$.\end{remark}

The converse in either case can be established as follows. Given
such a function $\Phi$ in $\mathcal{S}_2\left(
\coprod_{i=1}^{h} \Gamma_i \backslash
\mathcal{T}_{\mathfrak{p}} ; {\bf{C}} \right)$, consider its image
under the bijection $(\ref{bij})$. The theorem
of Jacquet and Langlands then associates to this image an
eigenform ${\bf{f}} \in \mathcal{S}_2(\mathfrak{N})$ having the
proscribed eigenvalues. \end{proof} \end{remark}

\section{Construction of measures}

Let ${\bf{f}} \in \mathcal{S}_2(\mathfrak{N})$
be an eigenfrom as defined for Proposition \ref{p-JLC}, with $\Phi$ the
associated quaternionic eigenform. Recall that we fixed an embedding 
$\overline{\bf{Q}} \rightarrow \overline{\bf{Q}}_p$, and take
$\mathcal{O}$ to be the ring of integers of a finite extension of ${\bf{Q}}_p$
containing all of the Fourier coefficients of ${\bf{f}}$. Recall as well that we let $\Lambda$
denote the $\mathcal{O}$-Iwasawa algebra $\mathcal{O}[[G_{\mathfrak{p}^{\infty}}]]$. We
construct in this section elements of $ \Lambda$, equivalently $\mathcal{O}$-valued 
measures on $G_{\mathfrak{p}^{\infty}}$, that 
interpolate the central values $L(\Phi, \rho, 1/2)$. Here, $\rho$ is any 
finite order character of $G_{\mathfrak{p}^{\infty}}$. The construction 
below generalizes those of Bertolini-Darmon (\cite{BD2}, \cite{BD}) in the 
ordinary case, as well as constructions of Darmon-Iovita \cite{DI} and Pollack
\cite{Poll} in the supersingular case. These constructions are also
sketched in the ordinary case by Longo \cite{Lo2}, using the language of Gross points. 
We use the Yuan-Zhang-Zhang generalization Waldpurger's theorem, as described in
Theorem \ref{W} above, to deduce an interpolation formula
for these measures (Theorem \ref{basicint}). We then give
a formula for the associated $\mu$-invariant (Theorem \ref{mu}),
generalizing the work of Vatsal \cite{Va2}. 

Fix an integral ideal $\mathfrak{N} \subset \mathcal{O}_F$ having the 
factorization in $F$ defined in $(\ref{factorization})$.
Let $B$ denote the totally definite quaternion algebra over
$F$ of discriminant $\mathfrak{N}^{-}$. Let $Z$ denote the 
maximal $\mathcal{O}_F[\frac{1}{\mathfrak{p}}]$-order in $K$, and let
$R \subset B$ be an Eichler $\mathcal{O}_F[\frac{1}{\mathfrak{p}}]$-order 
of level $\mathfrak{N}^{+}$. We fix an {\it{optimal embedding}} $\Psi$ of $\mathcal{O}$ 
into $R$, i.e. an injective $F$-algebra homomorphism $ \Psi: K \longrightarrow B$ 
such that \begin{align*}\Psi(K) \cap R = \Psi(Z).\end{align*} 
Such an embedding exists if and only if all of the primes dividing 
the level of $R$ are split in $K$ (see \cite[$\S$ II.3]{Vi}), so 
our choice of factorization $(\ref{factorization})$ ensures that we 
may choose such an embedding.  

\begin{remark}[Galois action on the Bruhat-Tits tree.] 

The reciprocity map $\rec_K$ induces an isomorphism \begin{align*}\begin{CD} 
\widehat{K}^{\times}/ \left(K^{\times} \prod_{v \nmid \mathfrak{p}} 
Z_{v}^{\times}\right) @>{r_K}>> G[\mathfrak{p}^{\infty}],\end{CD}\end{align*} 
as implied for instance from Lemma \ref{Galois}(i). Passing to the adelization, 
the optimal embedding $\Psi$ then induces an embedding \begin{align*}\begin{CD} 
\widehat{K}^{\times}/ \left(K^{\times} \prod_{v \nmid \mathfrak{p}}
Z_{v}^{\times}\right) @>{\widehat{\Psi}}>> B^{\times}\backslash \widehat{B}^{\times} / 
\prod_{v\nmid \mathfrak{p}} R_{v}^{\times}. \end{CD}\end{align*} 
Taking the subset of $\widehat{B}^{\times}$ defined by 
$\prod_{v \nmid \mathfrak{p}}R_{v}^{\times}$, with associated subgroups 
$\Gamma_{i}$ as defined in $(\ref{gamma})$, strong approximation $(\ref{SA2})$ 
gives an isomorphism \begin{align*}\begin{CD} \coprod_{i=1}^{h} \Gamma_{i}
\backslash B_{\mathfrak{p}}^{\times}/ F_{\mathfrak{p}}^{\times}
@>{\eta_{\mathfrak{p}}}>> B^{\times} \backslash \widehat{B}^{\times}
/  \prod_{v \nmid \mathfrak{p}} R_{v}^{\times}.
\end{CD}\end{align*} We may therefore consider the composition 
of maps given by \begin{align*}\begin{CD} 
G[\mathfrak{p}^{\infty}] @>{r_K^{-1}}>>
\widehat{K}^{\times}/ \left( K^{\times} \prod_{v
\nmid \mathfrak{p}} Z_{v}^{\times}\right)  \\
@>{\widehat{\Psi} }>> B^{\times} \backslash \widehat{B}^{\times} /
\prod_{v \nmid \mathfrak{p}} R_{v}^{\times}
 \\ @>{\eta_{\mathfrak{p}}^{-1}}>> \coprod_{i=1}^{h} \Gamma_{i}
\backslash B_{\mathfrak{p}}^{\times}/ F_{\mathfrak{p}}^{\times}.
\end{CD}\end{align*} The composition 
\begin{align}\label{compos} 
\eta_{\mathfrak{p}}^{-1} \circ \widehat{\Psi} \circ
r_{K}^{-1}\end{align} induces an action $\star$ of
the Galois group $G[\mathfrak{p}^{\infty}]$ on the Bruhat-Tits tree
$\mathcal{T}_{\mathfrak{p}} = (\mathcal{V}_{\mathfrak{p}},
\mathcal{E}_{\mathfrak{p}}^*)$. This action factors through that of the 
local optimal embedding $\Psi_{\mathfrak{p}}: K_{\mathfrak{p}} \longrightarrow B_{\mathfrak{p}}$.
We can give a more precise description, following \cite[$\S$ 2.2]{DI}. That is, 
the optimal embedding $\Psi: K \longrightarrow B$ induces a local optimal embedding 
$\Psi_{\mathfrak{p}}: K_{\mathfrak{p}}^{\times} \longrightarrow
B_{\mathfrak{p}}^{\times}$, which in turn induces an action (by conjugation) 
of $K_{\mathfrak{p}}^{\times}/F_{\mathfrak{p}}^{\times}$ on
$B_{\mathfrak{p}}^{\times}/F_{\mathfrak{p}}^{\times}$. The dynamics of this action depend on 
the decomposition of $\mathfrak{p}$ in $K$. Hence, we divide into cases. Let us write 
$K_{\mathfrak{p}} = K \otimes F_{\mathfrak{p}}$ to denote the localization of $K$ at $\mathfrak{p}$.

\begin{remark} [Case I: $\mathfrak{p}$ splits in $K$.] In this case, $K_{\mathfrak{p}}^{\times}$
is a split torus, and so the action of $K_{\mathfrak{p}}^{\times}/F_{\mathfrak{p}}^{\times}$ on
$\mathcal{T}_{\mathfrak{p}}$ does not fix any vertex. Fix a prime
$\mathfrak{P}$ above $\mathfrak{p}$ in $K$ (not to be confused with the maximal ideal 
$\mathfrak{P} \subset \mathcal{O}$ defined above). Define a homomorphism \begin{align*} \vert\vert
~\vert\vert_{\mathfrak{P}} : K_{\mathfrak{p}}^{\times}/F_{\mathfrak{p}}^{\times} &\longrightarrow {\bf{Z}}\\
x &\longmapsto \ord_{\mathfrak{P}}\left( \frac{x}{\overline{x}} \right).
\end{align*} Note that the choice of $\mathfrak{P}$ only changes the homomorphism above
by a sign. For later applications, we shall choose $\mathfrak{P}$ in accordance with our fixed 
embedding $\overline{{\bf{Q}}} \rightarrow \overline{{\bf{Q}}}_p$. Consider the maximal compact 
subgroup of $K_{\mathfrak{p}}^{\times}/F_{\mathfrak{p}}^{\times}$ defined by
\begin{align*} \UU_0 = \ker \left( \vert\vert ~ \vert\vert_{\mathfrak{P}}\right).
\end{align*} Consider the natural decreasing filtration of compact
subgroups \begin{align}\label{filtration1} \ldots \subset \UU_j \subset \ldots \UU_1
\subset \UU_0\end{align} satisfying the condition that $[\UU_0: \UU_j] = q^{j-1}(q +1)$
for each index $j \geq 1$. The action of $\UU_0$ on $\mathcal{T}_{\mathfrak{p}}$ fixes a
{\it{geodesic}} $J = J_{\Psi}$ of $\mathcal{T}_{\mathfrak{p}}$, i.e.
an infinite sequence of consecutive vertices. Now, the quotient
$K_{\mathfrak{p}}^{\times}/F_{\mathfrak{p}}^{\times}/\UU_0$ acts by translation on $J$. 
Let us define the distance between any vertex $\mathfrak{v} \in
\mathcal{V}_{\mathfrak{p}}$ and the geodesic $J$ to be
\begin{align*} d(\mathfrak{v}, J) := \min_{\mathfrak{w} \in J}
d(\mathfrak{v}, \mathfrak{w}).\end{align*} Here, $\mathfrak{w} \in
J$ runs over all of the vertices of $J$. If $d(\mathfrak{v}, J) =
j$, then it is simple to see from the definitions of distance above
that $\Stab_{K_{\mathfrak{p}}^{\times}/F_{\mathfrak{p}}^{\times}}\left( \mathfrak{v} 
\right) = \UU_j$. Moreover, we see that the quotient $K_{\mathfrak{p}}^{\times}/
F_{\mathfrak{p}}^{\times}/\UU_j$ acts simply transitively on the set of vertices 
of distance $j$ from $J$. In this case, let us fix a sequence of consecutive 
vertices $\lbrace \mathfrak{v}_j \rbrace_{j \geq 0}$ with $d(\mathfrak{v}_j, J) = j$
such that $\Stab_{K_{\mathfrak{p}}^{\times}/F_{\mathfrak{p}}^{\times}}\left( \mathfrak{v}_j \right) =
\UU_j.$\end{remark}

\begin{remark}[Case II: $\mathfrak{p}$ is inert in $K$.]
In this case, the quotient $K_{\mathfrak{p}}^{\times}/F_{\mathfrak{p}}^{\times}$ is compact, 
and so the action of $K_{\mathfrak{p}}^{\times}/F_{\mathfrak{p}}^{\times}$ on
$\mathcal{T}_{\mathfrak{p}}$ fixes a distinguished vertex
$\mathfrak{v}_0$. Hence, we can take $\UU_0 =
K_{\mathfrak{p}}^{\times}/F_{\mathfrak{p}}^{\times}$ to be the maximal 
compact subgroup in the construction above, with associated natural decreasing filtration
of compact subgroups \begin{align}\label{filtration2} \ldots \subset \UU_j \subset \ldots \UU_1 \subset \UU_0 =
K_{\mathfrak{p}}^{\times}/F_{\mathfrak{p}}^{\times}\end{align} satisfying the condition that 
$[\UU_0: \UU_j] = q^{j-1}(q +1)$ for each index $j\geq 1$. If $d(\mathfrak{v}_0, \mathfrak{v}) = j$ 
for some vertex $\mathfrak{v} \in \mathcal{V}_{\mathfrak{p}}$, then we have that
$\Stab_{K_{\mathfrak{p}}^{\times}/F_{\mathfrak{p}}^{\times}}\left(\mathfrak{v}\right) = \UU_j.$ 
In this case, let us fix a sequence of consecutive vertices $\lbrace
\mathfrak{v}_j \rbrace_{j\geq 0}$ with $d(\mathfrak{v}_0,
\mathfrak{v}_j)=j$ such that $\Stab_{K_{\mathfrak{p}}^{\times}/F_{\mathfrak{p}}^{\times}}\left(
\mathfrak{v}_j \right) = \UU_j.$\end{remark} Let us note that in either case
of the decomposition of $\mathfrak{p}$ in $K$, the the filtration subroup $\UU_j$ 
is simply the standard compact subgroup of $K_{\mathfrak{p}}^{\times}/F_{\mathfrak{p}}^{\times}$ 
of the form \begin{align}\label{UUj} \UU_j = \left( 1 + \mathfrak{p}^j \mathcal{O}_{K} \otimes
\mathcal{O}_{F_{\mathfrak{p}}}\right)^{\times}/\left( 1 +
\mathfrak{p}^j \mathcal{O}_{F_{\mathfrak{p}}}\right)^{\times}.\end{align}
In either case, we obtain from the filtration $(\ref{filtration1})$ or $(\ref{filtration2})$
an infinite sequence of consecutive edges $\lbrace \mathfrak{e}_j \rbrace_{j\geq 1}$, 
with each edge $\mathfrak{e}_j$ joining two vertices $\mathfrak{v}_{j-1} \leftrightarrow \mathfrak{v}_j$, 
and satisfying the property that \begin{align*} \Stab_{K_{\mathfrak{p}}^{\times}/F_{\mathfrak{p}}^{\times}}\left(
\mathfrak{e}_j \right) = \UU_j.\end{align*} We refer the reader to \cite[$\S$ 1]{DI} for some more details.
Let us for simplicity write $\lbrace \mathfrak{w}_j\rbrace = \lbrace \mathfrak{w}_j \rbrace_j$ to 
denote either the sequence of consecutive vertices $\lbrace 
\mathfrak{v}_j \rbrace_{j \geq 0}$ or the induced sequence of consecutive 
edges $\lbrace \mathfrak{e}_j \rbrace_{j\geq1}.$ 
\end{remark}

\begin{remark}[A pairing.]

Fix an eigenform $\Phi = \left( \phi^i \right)_{i=1}^{h} \in \mathcal{S}_{2}(\coprod_{i=1}^{h} 
\Gamma_i \backslash \mathcal{T}_{\mathfrak{p}}; \mathcal{O})$. Fix a sequence of consecutive
edges or vertices $\lbrace \mathfrak{w}_j \rbrace$. We define for each index $j$ a function
\begin{align*}\Phi_{K, j}: K_{\mathfrak{p}}^{\times}/F_{\mathfrak{p}}^{\times}/\UU_j 
&\longrightarrow \mathcal{O},\\ \gamma &\longmapsto \Phi\left( 
\gamma \star \mathfrak{w}_j \right) = \left( \phi^i \left( 
\gamma \star \mathfrak{w}_{j} \right) \right)_{i=1}^{h}.\end{align*} 
Let us now simplify notation by writing 
\begin{align*} \HH_{\infty} &= \rec_K^{-1}\left(G[\mathfrak{p}^{\infty}]\right) =
\widehat{K}^{\times}/ \left( K^{\times}
\prod_{v \nmid \mathfrak{p}} Z_{v}^{\times}\right).
\end{align*} Let us commit an abuse of notation in writing $\UU_j$ 
to also denote the image of the filtration subgroup defined in $(\ref{UUj})$above in 
$\HH_{\infty}$. We then have the relation \begin{align*} \HH_{\infty} = \ilim j \HH_{\infty}/\UU_j.
\end{align*} To be more precise, $\HH_{\infty}$ is profinite, hence compact. The 
open subgroups $\UU_j$ then have finite index in $\HH_{\infty}$. Since 
$\HH_{\infty}$ must also be locally compact, its open subgroups form 
a base of neighbourhoods of the identity. We claim that the collection 
$\lbrace \UU_j \rbrace_{j \geq 0}$ in fact forms a base of neighbourhoods 
of the identity, in which case the natural map $\HH_{\infty} \longrightarrow 
\varprojlim_j \HH_{\infty}/\UU_j$ is seen to be both continuous and injective. 
Since its image is dense, a standard compactness argument then implies 
that the map must be an isomorphism. We now claim that the functions 
$\Phi_{K,j}$ defined above in fact descend to functions on the quotients 
$\HH_j = \HH_{\infty}/\UU_j$. Indeed, this we claim is clear
from the composition of maps $(\ref{compos})$, as the part of the
image of $\HH_{\infty}$ that does not land in $K_{\mathfrak{p}}^{\times}/
F_{\mathfrak{p}}^{\times}$ must  lie in one of the subgroups $\Gamma_i$. 
Since each eigenform ${\phi}^i $ is $\Gamma_i$-invariant,
the claim follows. The functions $\Phi_{K, j}$ are then seen to give 
rise to a natural pairing \begin{align*} [~,~]_{\Phi}:
\HH_{\infty} \times \mathcal{T}_{\mathfrak{p}} &\longrightarrow 
\mathcal{O} \\ (  t, \mathfrak{w}_j) &\longmapsto \Phi(
\eta_{\mathfrak{p}}^{-1} \circ \widehat{\Psi}( t) \star
\mathfrak{w}_j),\end{align*} and under the reciprocity map $\rec_{K}$
a natural pairing \begin{align*} [~,~]_{\Phi}: G[\mathfrak{p}^{\infty}] \times
\mathcal{T}_{\mathfrak{p}} &\longrightarrow \mathcal{O}
\\ (\sigma, \mathfrak{w}_j) &\longmapsto \Phi( \eta_{\mathfrak{p}}^{-1} 
\circ \widehat{\Psi} \circ \rec_K^{-1} (\sigma)\star
\mathfrak{w}_j).\end{align*} Let us write $[~,~]_{\Phi}$ to denote either pairing,
though it is a minor abuse of notation.\end{remark}

\begin{remark} [A distribution.] 

Let us define for each index $j$ a group ring element 
\begin{align*} \vartheta_{\Phi, j} = \sum_{t \in \HH_j} \Phi_{K, j}(t) 
\cdot t \in \mathcal{O}[\HH_j]. \end{align*} We shall consider the 
natural projections \begin{align*}\pi_{j+1, j}: \HH_{j+1} \longrightarrow 
\HH_j,\end{align*} as well as the group ring operations 
\begin{align*} \xi_j = \sum_{x \in \UU_j/\UU_{j+1}} x.\end{align*}
That is, let $\xi_j$ denote the map from $\HH_j$ to 
$\HH_{j+1}$ such that for for any $y \in \HH_j$,
\begin{align} \label{xi} \xi_j(y) &= \sum_{x \in \HH_{j+1} \atop
\pi_{j+1, j}(x)=y} x. \end{align} Let us now clarify some more
notations. Given any $t_j \in \HH_j$, we let $t_{j+1}$ denote
an arbitrary lift of $t_j$ to $\HH_{j+1}$. This allows us to
abuse notation in viewing $\vartheta_{\Phi, j}$ as an element of the
group ring $\mathcal{O}[\HH_{j+1}],$ i.e. via replacement of $\vartheta_{\Phi,
j}$ with some arbitrary lift to $\mathcal{O}[\HH_{j+1}]$ under the projection
$\pi_{j+1, j}$. This lift is not well defined, but the product
$\xi_j \vartheta_{\Phi,j} \in \mathcal{O}[\HH_{j+1}]$ is.

\begin{lemma}\label{distribution} We have the following distribution
relations with respect to the eigenform ${\bf{f}}_0$ associated to
$\Phi$ in the setting of Proposition \ref{p-JLC}.

\begin{itemize}
\item[(i)] If $\lbrace \mathfrak{w}_j \rbrace =\lbrace
\mathfrak{v}_j \rbrace_{j\geq0}$, then
\begin{align} \pi_{j+1, j} \left( \vartheta_{\Phi, j+1} \right) =
a_{\mathfrak{p}}({\bf{f}}_0) \cdot \vartheta_{\Phi, j} - \xi_{j-1}
\vartheta_{\Phi, j-1}. \end{align}

\item[(ii)] If $\lbrace \mathfrak{w}_j \rbrace = \lbrace
\mathfrak{e}_j \rbrace_{j\geq1}$, then
\begin{align} \pi_{j+1, j} \left( \vartheta_{\Phi, j+1} \right) =
\alpha_{\mathfrak{p}}({\bf{f}}_0) \cdot \vartheta_{\Phi, j}.
\end{align} \end{itemize} \end{lemma}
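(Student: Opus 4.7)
The plan is to reduce both identities to the combinatorial descriptions of $T_{\mathfrak{p}}$ and $U_{\mathfrak{p}}$ on the Bruhat-Tits tree, combined with the eigenvalue relations of Proposition \ref{p-JLC}. The natural first move is to expand the projected theta element fiberwise:
\begin{align*}
\pi_{j+1,j}(\vartheta_{\Phi,j+1}) &= \sum_{s \in \HH_j} \Bigl( \sum_{t \in \pi_{j+1,j}^{-1}(s)} \Phi(t \star \mathfrak{w}_{j+1}) \Bigr) \cdot s,
\end{align*}
so that the task becomes evaluating each inner fiber sum. Each fiber $\pi_{j+1,j}^{-1}(s)$ is a coset of $\UU_j/\UU_{j+1}$, of cardinality $q$ by the description of the filtrations $(\ref{filtration1})$ and $(\ref{filtration2})$, and the geometric crux is to identify $\{t \star \mathfrak{w}_{j+1} : t \in \pi_{j+1,j}^{-1}(s)\}$ with a distinguished set of tree neighbors of $s \star \mathfrak{w}_j$.

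More precisely, because $\UU_{j+1}$ stabilizes $\mathfrak{w}_{j+1}$ and $\UU_j$ stabilizes $\mathfrak{w}_j$ (and hence acts on its neighborhood in $\mathcal{T}_{\mathfrak{p}}$), orbit-stabilizer exhibits exactly $q$ neighbors, the single omitted one being the unique neighbor also fixed by all of $\UU_j$. In case (i) this omitted vertex is $s \star \mathfrak{v}_{j-1}$, since $\UU_j \subset \UU_{j-1}$; in case (ii) it is the reverse $\overline{s \star \mathfrak{e}_j}$, since $\UU_j$ stabilizes $\mathfrak{e}_j$. Comparing with the tree-theoretic formulas for $T_{\mathfrak{p}}$ and $U_{\mathfrak{p}}$ recorded above, the fiber sum in case (i) equals $(T_{\mathfrak{p}}\Phi)(s \star \mathfrak{v}_j) - \Phi(s \star \mathfrak{v}_{j-1}) = a_{\mathfrak{p}}({\bf f}_0) \Phi(s \star \mathfrak{v}_j) - \Phi(s \star \mathfrak{v}_{j-1})$, while in case (ii) it equals $(U_{\mathfrak{p}}\Phi)(s \star \mathfrak{e}_j) = \alpha_{\mathfrak{p}}({\bf f}_0) \Phi(s \star \mathfrak{e}_j)$.

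It remains, in case (i), to recognize the correction term $\sum_{s \in \HH_j} \Phi(s \star \mathfrak{v}_{j-1}) \cdot s$ as $\xi_{j-1} \vartheta_{\Phi,j-1}$. Since $\UU_{j-1} \supset \UU_j$ stabilizes $\mathfrak{v}_{j-1}$, the value $\Phi(s \star \mathfrak{v}_{j-1})$ depends only on $\pi_{j,j-1}(s) \in \HH_{j-1}$, and unwinding the definition $\xi_{j-1}(y) = \sum_{\pi_{j,j-1}(x) = y} x$ from $(\ref{xi})$ yields the claimed identity in $\mathcal{O}[\HH_j]$. The main obstacle I anticipate is precisely the geometric identification of the $\UU_j/\UU_{j+1}$-orbit with a specific set of tree neighbors: this requires separate verifications in the split and inert cases using the explicit shape of $\UU_j$ in $(\ref{UUj})$ and careful tracking of orientations in the edge case, together with an appeal to the fact that the composition $(\ref{compos})$ intertwines the $\HH_\infty$-action on $\HH_j$ with the $K_{\mathfrak{p}}^\times/F_{\mathfrak{p}}^\times$-action on $\mathcal{T}_{\mathfrak{p}}$ modulo the $\Gamma_i$-invariance built into $\Phi$. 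Once this geometric step is in place, the eigenvalue relations of Proposition \ref{p-JLC} and the bijection of Proposition \ref{SA3} translate the fiber sums into the desired distribution relations.
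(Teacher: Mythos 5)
Your proof is correct and follows essentially the same route as the paper's: expand $\pi_{j+1,j}(\vartheta_{\Phi,j+1})$ fiberwise, identify the fiber orbit $\{t\star\mathfrak{w}_{j+1}\}$ with the $q$ tree-neighbors of $s\star\mathfrak{w}_j$ missing exactly $s\star\mathfrak{v}_{j-1}$ (resp.\ the reversed edge), and invoke the combinatorial $T_{\mathfrak{p}}$ and $U_{\mathfrak{p}}$ formulas together with the eigenvalue relations of Proposition~\ref{p-JLC}. If anything, you make the orbit-stabilizer identification of the omitted neighbor and the rewriting of $\sum_{s}\Phi(s\star\mathfrak{v}_{j-1})\cdot s$ as $\xi_{j-1}\vartheta_{\Phi,j-1}$ more explicit than the paper, which defers that geometric step to the cited reference.
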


\begin{proof}
See \cite[Lemma 2.6]{DI}. The same approach applied to each component $\phi^i$ 
of $\Phi$ works here. That is, we have by direct calculation on each $\phi^i$ that 
\begin{align}\label{projection} \pi_{j+1, j}\left( \vartheta_{\Phi, j+1} \right) &= 
\sum_{t_j \in \HH_j} \left( \sum_{x \in \HH_{j+1} \atop \pi_{j+1, j}(x)=y } \Phi_{K, j+1}
(x t_{j+1} ) \right) \cdot t_{j}^{-1}. \end{align} On the other hand, we have by definition that
\begin{align}\label{innersum} \sum_{x \in \HH_{j+1} \atop \pi_{j+1, j}(x)=y} \Phi_{K,j+1} 
(x t_{j+1} ) &= \sum_{x \in \HH_{j+1} \atop \pi_{j+1, j}(x)=y} \Phi\left((x t_{j+1}) \star 
\mathfrak{w}_{j+1}\right).\end{align} Suppose that $\lbrace \mathfrak{w}_j \rbrace 
= \lbrace\mathfrak{v}_j \rbrace_{j\geq0}$ is a sequence of consecutive vertices. 
Then, the sum on the right hand side of $(\ref{innersum})$ corresponds on each 
component $\phi^i$ to the sum over the $q+1$ vertices adjacent to $t_j \star \mathfrak{v}_j$, 
minus the vertex $t_j \star \mathfrak{v}_{j-1}$. We refer the reader to \cite[Figure 3, p. 12]{DI} 
for a visual aid, as it also depicts the situation here. In particular, we find that the inner sum 
of $(\ref{projection})$ is given by $$T_{\mathfrak{p}} \left( \Phi \right)(t_j \star \mathfrak{v}_j) 
- \Phi\left(t_j \star \mathfrak{v}_{j-1} \right).$$ We may then deduce from Theorem
\ref{p-JLC} applied to $\Phi$ that \begin{align*} \sum_{x \in \HH_{j+1} \atop \pi_{j+1, j}(x)=y} 
\Phi\left( (x t_{j+1}) \star \mathfrak{v}_{j+1} \right) &= a_{\mathfrak{p}}({\bf{f}}_0) \cdot 
\Phi(t_j \star \mathfrak{v}_j ) - \Phi(t_j \star \mathfrak{v}_{j-1})\\ &= a_{\mathfrak{p}}({\bf{f}}_0) 
\cdot \Phi_{K, j}(t_j) - \Phi_{K, j-1}(t_j). \end{align*} The first part of the claim then
follows from $(\ref{projection})$ and $(\ref{innersum})$, using the
definition of $\vartheta_{\Phi, j}$. Suppose now that $\lbrace
\mathfrak{w}_j \rbrace = \lbrace \mathfrak{e}_j \rbrace_{j\geq1}$ is a 
consecutive sequence of edges. Then, the sum on the right hand side 
of $(\ref{innersum})$ corresponds on each component $\phi^i$ to the 
sum over the $q+1$ edges emanating from $t_j \star \mathfrak{e}_j$, 
minus the edge obtained by reversing orientation. In particular, we find 
that the inner sum of $(\ref{projection})$ is given by
$$U_{\mathfrak{p}} \left( \Phi \right)(t_j \star \mathfrak{e}_j ).$$ We may 
then deduce from Proposition \ref{p-JLC} that \begin{align*} \sum_{x \in \HH_{j+1} 
\atop \pi_{j+1, j}(x)=y} \Phi\left( (x t_{j+1} ) \star \mathfrak{e}_{j+1} \right) =
\alpha_{\mathfrak{p}}({\bf{f}}_0) \cdot \Phi\left((t_j \star \mathfrak{e}_j) \right).\end{align*} 
The second part of the claim then follows as before from $(\ref{projection})$ and
$(\ref{innersum})$, using the definition of $\vartheta_{\Phi, j}$. \end{proof}\end{remark}

\begin{remark}[The ordinary case.]

Let us assume now that the eigenform
${\bf{f}}_0$ is $\mathfrak{p}$-ordinary, i.e. that the image of
the eigenvalue $a_{\mathfrak{p}}({\bf{f}}_0)$ under our fixed embedding
$\overline{\bf{Q}} \rightarrow \overline{{\bf{Q}}}_p$ is a $p$-adic unit. 
Recall that we let $\alpha_{\mathfrak{p}} = \alpha_{\mathfrak{p}}({\bf{f}}_0)$ 
denote unit root of the Hecke polynomial $(\ref{char})$. Fix a sequence of 
consecutive directed edges $\lbrace \mathfrak{w}_j \rbrace = \lbrace
\mathfrak{e}_j \rbrace_{j\geq 1}$. Let us consider the system of maps
\begin{align*} \varphi_{\Phi, j}: \HH_j &\longrightarrow \mathcal{O} \end{align*}
defined for each index $j \geq 1$ by the assignment of an element $t \in \HH_j$
to the value \begin{align}\label{orddist} \varphi_{\Phi, j}(t) &= \alpha_{\mathfrak{p}}^{-j} 
\cdot \Phi_{K, j}(t). \end{align} For each $j \geq 1$, let us also define a group ring element 
\begin{align*}\theta_{\Phi, j} (\HH_j) &= \alpha_{\mathfrak{p}}^{-j} 
\cdot \sum_{t \in \HH_j} \Phi_{K,j}(t) \cdot  t \\ &= \alpha_{\mathfrak{p}}^{-j} 
\cdot \vartheta_{\Phi, j} \in \mathcal{O}[\HH_j]. \end{align*}

\begin{lemma} The system of maps $\lbrace \varphi_{\Phi, j}
\rbrace_{j \geq 1}$ defined in $(\ref{orddist})$ determines an $\mathcal{O}$-valued 
measure on the group $\HH_{\infty} = \rec_K^{-1}(G[\mathfrak{p}^{\infty}])$. \end{lemma}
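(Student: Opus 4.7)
The plan is to show that the system $\{\theta_{\Phi,j}\}_{j \geq 1}$ defines a compatible sequence in the inverse limit $\varprojlim_j \mathcal{O}[\HH_j]$, which by the identification $\HH_\infty = \varprojlim_j \HH_j$ established in the preceding discussion is the same as giving an $\mathcal{O}$-valued measure on $\HH_\infty$. Concretely, writing $\pi_{j+1,j}: \mathcal{O}[\HH_{j+1}] \to \mathcal{O}[\HH_j]$ for the map induced by the natural projection of groups, I need to verify the coherence condition
\begin{align*}
\pi_{j+1,j}\bigl(\theta_{\Phi, j+1}\bigr) = \theta_{\Phi, j}
\end{align*}
for all $j \geq 1$, equivalently that $\sum_{t_{j+1} \mapsto t_j} \varphi_{\Phi, j+1}(t_{j+1}) = \varphi_{\Phi, j}(t_j)$ for each $t_j \in \HH_j$.

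First I would recall from Lemma \ref{distribution}(ii), which applies since we are working with a consecutive sequence of edges $\{\mathfrak{e}_j\}_{j \geq 1}$ appropriate to the ordinary (level-$\mathfrak{p}$ Iwahori) setting, that
\begin{align*}
\pi_{j+1,j}\bigl(\vartheta_{\Phi, j+1}\bigr) = \alpha_{\mathfrak{p}}({\bf{f}}_0) \cdot \vartheta_{\Phi, j}.
\end{align*}
Multiplying both sides by $\alpha_{\mathfrak{p}}^{-(j+1)}$ and using $\mathcal{O}$-linearity of $\pi_{j+1,j}$ yields
\begin{align*}
\pi_{j+1,j}\bigl(\theta_{\Phi, j+1}\bigr) = \alpha_{\mathfrak{p}}^{-(j+1)} \cdot \alpha_{\mathfrak{p}} \cdot \vartheta_{\Phi, j} = \alpha_{\mathfrak{p}}^{-j} \vartheta_{\Phi, j} = \theta_{\Phi, j},
\end{align*}
which is exactly the required compatibility. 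Second, I need to check that each $\theta_{\Phi, j}$ actually lies in $\mathcal{O}[\HH_j]$, not merely in the corresponding group ring over the fraction field; this is where the $\mathfrak{p}$-ordinary hypothesis enters, since it ensures $\alpha_{\mathfrak{p}}$ is a unit in $\mathcal{O}$, so $\alpha_{\mathfrak{p}}^{-j} \in \mathcal{O}^{\times}$, and the values $\Phi_{K,j}(t) \in \mathcal{O}$ remain in $\mathcal{O}$ after scaling.

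Finally, I would invoke the standard identification between $\mathcal{O}$-valued measures on the profinite group $\HH_\infty$ and elements of the completed group ring $\varprojlim_j \mathcal{O}[\HH_j]$, so that the coherent system $\{\theta_{\Phi, j}\}$ corresponds to a unique element of $\varprojlim_j \mathcal{O}[\HH_j]$, equivalently a measure whose specialization to each finite quotient $\HH_j$ recovers $\varphi_{\Phi,j}$. The only genuine content in the argument is the distribution relation, already established in Lemma \ref{distribution}; the main point to highlight is the role of $\mathfrak{p}$-ordinarity in guaranteeing that the renormalization by $\alpha_{\mathfrak{p}}^{-j}$ preserves integrality, which is precisely the obstacle overcome by passing to a unit root (and which fails in the supersingular case, necessitating the two-variable $\pm$ construction considered later).
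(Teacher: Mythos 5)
Your proof is correct and follows essentially the same route as the paper: apply Lemma \ref{distribution}(ii), multiply by $\alpha_{\mathfrak{p}}^{-(j+1)}$, and read off the compatibility $\pi_{j+1,j}(\theta_{\Phi,j+1})=\theta_{\Phi,j}$. You additionally make explicit the integrality point that $\alpha_{\mathfrak{p}}\in\mathcal{O}^{\times}$ (needed so that $\theta_{\Phi,j}\in\mathcal{O}[\HH_j]$ rather than merely in the fraction-field group ring), which the paper leaves implicit but is exactly where $\mathfrak{p}$-ordinarity is used.
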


\begin{proof} Lemma \ref{distribution}(ii) implies that for each $j
\geq 1$, \begin{align*} \pi_{j+1, j}\left( \theta_{\Phi, j+1}
\right) = \pi_{j+1,j } \left( \alpha_{\mathfrak{p}}^{-(j+1)} \cdot
\vartheta_{\Phi, j+1} \right)  &= \alpha_{\mathfrak{p}}^{-(j+1)}
\cdot \pi_{j+1, j} \left( \vartheta_{\Phi, j+1} \right) \\ &=
\alpha_{\mathfrak{p}}^{-(j+1)} \cdot \alpha_{\mathfrak{p}} \cdot
\vartheta_{\Phi, j} \\ &= \alpha_{\mathfrak{p}}^{-j} \cdot
\vartheta_{\Phi, j} \\ &= \theta_{\Phi, j}.\end{align*} Hence, the
system of maps $\lbrace \varphi_{\Phi, j} \rbrace_{j \geq 1}$
defines a bounded $\mathcal{O}$-valued distribution on $H_{\infty}$, as
required. \end{proof}  

\begin{corollary} The system of maps $\lbrace \varphi_{\Phi, j}
\rbrace_{j\geq 1}$, under composition with the reciprocity map 
$\rec_K$ followed by projection to the Iwasawa algebra 
$\Lambda = \mathcal{O}[[G_{\mathfrak{p}^{\infty}}]]$,
defines an $\mathcal{O}$-valued measure $d \vartheta_{\Phi}$ 
on the Galois group $G_{\mathfrak{p}^{\infty}}$. \end{corollary}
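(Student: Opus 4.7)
The plan is to observe that this corollary is essentially a formal consequence of the preceding lemma combined with the structural results on $G[\mathfrak{p}^\infty]$ recorded in Lemma \ref{Galois}. Since the preceding lemma has already established that $\{\varphi_{\Phi,j}\}_{j\geq 1}$ defines a bounded $\mathcal{O}$-valued distribution on $\HH_\infty$, what remains is to transport this distribution along the chain of continuous group homomorphisms
\begin{align*}
\HH_\infty \xrightarrow{\rec_K} G[\mathfrak{p}^\infty] \twoheadrightarrow G_{\mathfrak{p}^\infty} = G[\mathfrak{p}^\infty]/G[\mathfrak{p}^\infty]_{\tors}
\end{align*}
and verify that the pushed-forward system still satisfies the distribution relations, yielding an element of $\Lambda$.

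More precisely, first I would package the conclusion of the preceding lemma as an element $\theta_\Phi \in \varprojlim_j \mathcal{O}[\HH_j] \cong \mathcal{O}[[\HH_\infty]]$, using the fact (verified in the remark preceding the pairing construction) that $\HH_\infty = \varprojlim_j \HH_j$ as topological groups. The compatibility $\pi_{j+1,j}(\theta_{\Phi,j+1}) = \theta_{\Phi,j}$ checked in the previous lemma is exactly the condition required for such an inverse limit element to exist. Next, by Lemma \ref{Galois}(i), the reciprocity map $\rec_K$ induces a topological isomorphism $\HH_\infty \cong G[\mathfrak{p}^\infty]$, and hence a ring isomorphism $\mathcal{O}[[\HH_\infty]] \cong \mathcal{O}[[G[\mathfrak{p}^\infty]]]$; I would apply this to transport $\theta_\Phi$ into $\mathcal{O}[[G[\mathfrak{p}^\infty]]]$.

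Finally, the canonical projection $G[\mathfrak{p}^\infty] \twoheadrightarrow G_{\mathfrak{p}^\infty}$ is continuous, hence induces a continuous $\mathcal{O}$-algebra surjection $\mathcal{O}[[G[\mathfrak{p}^\infty]]] \twoheadrightarrow \Lambda = \mathcal{O}[[G_{\mathfrak{p}^\infty}]]$; pushing $\theta_\Phi$ forward under this surjection produces the desired element, which one interprets as the measure $d\vartheta_\Phi$ on $G_{\mathfrak{p}^\infty}$. At the level of the defining system, this amounts to specifying for each $n$ the finite sum
\begin{align*}
\theta_{\Phi,n}(G_{\mathfrak{p}^n}) = \sum_{\sigma \in G_{\mathfrak{p}^n}} \Bigl( \sum_{t \mapsto \sigma} \varphi_{\Phi,j(n)}(t) \Bigr) \cdot \sigma,
\end{align*}
where $j(n)$ is an index large enough that $\HH_{j(n)}$ surjects onto $G_{\mathfrak{p}^n}$, and the inner sum runs over fibers of this surjection; the boundedness and compatibility of these finite sums follow immediately from the corresponding properties of $\{\theta_{\Phi,j}\}$ already established.

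There is no real obstacle here beyond bookkeeping: the content of the statement is entirely contained in the prior lemma, and the only additional input is the standard fact that a measure on a profinite group pushes forward to a measure on any continuous quotient. The one point that warrants a line of explanation is that the torsion subgroup $G[\mathfrak{p}^\infty]_{\tors}$ (which is finite by Lemma \ref{Galois}(ii)) is killed by averaging, so no divergence issues arise when passing from $G[\mathfrak{p}^\infty]$ to its $\mathbf{Z}_p^\delta$-quotient $G_{\mathfrak{p}^\infty}$.
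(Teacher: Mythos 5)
The paper states this corollary with no proof at all—it is treated as an immediate consequence of the preceding lemma—so your proposal is supplying the implicit argument, and it does so correctly: transport the bounded distribution on $\HH_\infty$ via the isomorphism $\rec_K$ of Lemma \ref{Galois}(i), then push forward along the continuous surjection $G[\mathfrak{p}^\infty]\twoheadrightarrow G_{\mathfrak{p}^\infty}$, which at the group-ring level is just the natural $\mathcal{O}$-algebra surjection $\mathcal{O}[[G[\mathfrak{p}^\infty]]]\twoheadrightarrow\Lambda$. One small terminological caution: you describe the passage to the $\mathbf{Z}_p^\delta$-quotient as the torsion subgroup being ``killed by averaging,'' but the pushforward of a measure along a quotient map is a \emph{sum} over fibers, not an average (no division by $|G[\mathfrak{p}^\infty]_{\tors}|$ occurs, which is fortunate since that order may be divisible by $p$); your conclusion that no boundedness or convergence issues arise is still correct precisely because the fibers are finite and the distribution is already bounded.
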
 
Let us now consider the associated completed group ring element
\begin{align}\label{ordzeta} \theta_{\Phi} = \ilim j
\alpha_{\mathfrak{p}}^{-j} \cdot \sum_{\sigma \in G_{\mathfrak{p}^j}} 
[\sigma, \mathfrak{e}_j]_{\Phi} \cdot
\sigma \in \Lambda
\end{align} Observe that a different choice of sequence of directed edges
$\lbrace \mathfrak{e}_j \rbrace_{j\geq1}$ has the effect of
multiplying $\theta_{\Phi}$ by an automorphism of
$G_{\mathfrak{p}^{\infty}}$. To correct this, we let
$\mathcal{L}_{\Phi}^{*}$ denote the image of $\mathcal{L}_{\Phi}$
under the involution of $\mathcal{O}[[G_{\mathfrak{p}^{\infty}}]]$ that sends
$\sigma \mapsto \sigma^{-1} \in G_{\mathfrak{p}^{\infty}}$.

\begin{definition}
Let $\mathcal{L}_{\mathfrak{p}}(\Phi, K) = \theta_{\Phi}
\theta_{\Phi}^{*}.$ \end{definition} Hence, $\mathcal{L}_{\mathfrak{p}}(\Phi, K) $ is a well-defined 
element of $\Lambda$.\end{remark}

\begin{remark} [The supersingular case.]

Assume now that $a_{\mathfrak{p}}({\bf{f}}_0) = 0.$ Fix a sequence
of consecutive vertices $\lbrace \mathfrak{w}_j \rbrace = \lbrace 
\mathfrak{v}_j \rbrace_{j\geq 0}$. Here, we give a construction of 
the $\mathfrak{p}$-adic $L$-function of the quaternionic eigenform 
$\Phi$ associated to ${\bf{f}}_0$ by Proposition \ref{p-JLC} following 
\cite{DI}, building on techniques of Pollack \cite{Poll}. Recall that 
by Lemma \ref{Galois} (ii), we have an isomorphism of topological 
groups $G_{\mathfrak{p}^{\infty}} \cong {\bf{Z}}_{p}^{\delta}$, with 
$\delta = [F_{\mathfrak{p}} : {\bf{Q}}_p].$ Fixing $\delta$ topological 
generators $\gamma_1, \ldots, \gamma_{\delta}$ of $G_{\mathfrak{p}^{\infty}}$,
we can then define an isomorphism \begin{align*} \Lambda &\longrightarrow 
\mathcal{O}[[T_1, \ldots, T_{\delta}]] \\ \left( \gamma_1, \ldots, \gamma_{\delta} \right)
&\longmapsto \left( T_1 + 1, \ldots, T_{\delta} + 1 \right). \end{align*} 
We obtain from this an identification of group rings
\begin{align}\label{groupring} \mathcal{O}[G_{\mathfrak{p}^n}] \longrightarrow 
\mathcal{O}[T_1, \ldots, T_{\delta}]/\left( \left( T_1+1 \right)^{p^n} -1, \ldots, 
\left( T_{\delta}+1 \right)^{p^n}-1 \right)\end{align} via the map that sends each 
$\gamma_i \mod G_{\mathfrak{p}^{\infty}}^{p^n}$ to the class $T_i +1 \mod 
\left((T_i+1)^{p^n} -1\right).$ Granted this identification 
$(\ref{groupring})$, we claim to have the following power series description 
of the group ring operator $\xi_n$ defined in $(\ref{xi})$: 
\begin{align*}\begin{CD} (T_1, \ldots, T_{\delta}) @>{\xi_n}>> 
\left( \Sigma_{p^n}(T_1+1), \ldots, \Sigma_{p^n}(T_{\delta}+1) \right).\end{CD}\end{align*} 
Here, $\Sigma_{p^n}$ denotes the cyclotomic polynomial of degree $p^n$. Let $\Omega_n$ 
denote the power series operation that sends \begin{align*} (T_1, \ldots, T_{\delta}) 
&\longrightarrow  \left(  \left( T_1+1 \right)^{p^n} -1, \ldots, \left(T_{\delta} +1\right)^{p^n}-1 \right) \\
&=  \left( T_1 \cdot \prod_{j=1}^{n} \Sigma_{p^j}(T_1 + 1), \ldots, T_{\delta} \cdot \prod_{j=1}^{n}
\Sigma_{p^j}(T_{\delta}+1) \right). \end{align*} Here, the last equality follows from the 
fact that \begin{align*} \left(T+1 \right)^{p^n} = T \cdot \prod_{j=1}^n \Sigma_{p^j} (T+1).
\end{align*} Let us also define power series operations
\begin{align*} \widetilde{\Omega}_n^{+} &=
\widetilde{\Omega}_n^{+}(T_1, \ldots, T_{\delta}) = \prod_{j=2 \atop j
\equiv 0(2)}^n \xi_j(T_1, \ldots, T_{\delta})\\
\widetilde{\Omega}_n^{-} &= \widetilde{\Omega}_n^{-}(T_1, \ldots,
T_{\delta})  = \prod_{j=1 \atop j \equiv 1(2)}^n \xi_j(T_1, \ldots, T_{\delta})\\
\Omega_{n}^{\pm} &= \Omega_{n}^{\pm}(T_1, \ldots, T_{\delta})  = (T_1,
\ldots, T_{\delta}) \star \widetilde{\Omega}_{n}^{\pm}. \end{align*} 
Here, we write $( T_1, \ldots, T_{\delta}) \star $ to denote the dot product, i.e. the multiplication
operation that sends $(X_1, \ldots, X_{\delta})$ to $ (T_1 X_1, \ldots, T_{\delta} X_{\delta})$,
and $\xi_j$ is the group ring operation defined above in $(\ref{xi})$. Let us for simplicity of 
notation make the identification $\Lambda \cong \mathcal{O}[[T_1, \ldots, T_{\delta}]]$ implicitly 
in the construction that follows.

\begin{lemma}\label{2.7} Given an integer $n \geq 0$,
let $\varepsilon$ denote the sign of $\left( -1 \right)^n$. Multiplication by
$\widetilde{\Omega}_{n}^{-\varepsilon}$ induces a bijection $\Lambda/\left( 
\Omega_{n}^{\varepsilon} \right) \longrightarrow \widetilde{\Omega}_{n}^{-\varepsilon}
\Lambda/\left( \Omega_n \right).$ \end{lemma}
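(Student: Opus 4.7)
The plan is to realize the map as a quotient of the natural multiplication-by-$\widetilde{\Omega}_n^{-\varepsilon}$ map on $\Lambda$, and to compute its kernel using an explicit factorization of $\Omega_n$. The crucial input is the factorization identity
\[
\Omega_n \;=\; \Omega_n^{\varepsilon} \cdot \widetilde{\Omega}_n^{-\varepsilon}
\]
in $\Lambda$. Coordinate-by-coordinate this is the elementary polynomial identity $(T_i+1)^{p^n}-1 = T_i\prod_{j=1}^{n}\Sigma_{p^j}(T_i+1)$ already noted in the paragraph defining $\Omega_n$, together with the observation that $\widetilde{\Omega}_n^{+}$ and $\widetilde{\Omega}_n^{-}$ partition the cyclotomic factors $\Sigma_{p^j}(T_i+1)$ (for $1\leq j\leq n$) according to the parity of $j$, so that $\widetilde{\Omega}_n^{+}\cdot\widetilde{\Omega}_n^{-}=\prod_{j=1}^n\Sigma_{p^j}$ and hence $\Omega_n^{\varepsilon}\cdot\widetilde{\Omega}_n^{-\varepsilon} = T\cdot\widetilde{\Omega}_n^{\varepsilon}\cdot\widetilde{\Omega}_n^{-\varepsilon} = \Omega_n$.

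Granted this, I would consider the $\Lambda$-linear map
\[
m\colon \Lambda \longrightarrow \widetilde{\Omega}_n^{-\varepsilon}\Lambda/(\Omega_n), \qquad f \longmapsto \widetilde{\Omega}_n^{-\varepsilon}f \bmod (\Omega_n),
\]
which is surjective onto its target by construction. The inclusion $(\Omega_n^{\varepsilon})\subseteq \ker(m)$ is immediate from the factorization: if $f=g\,\Omega_n^{\varepsilon}$, then $\widetilde{\Omega}_n^{-\varepsilon}f = g\cdot\Omega_n^{\varepsilon}\widetilde{\Omega}_n^{-\varepsilon} = g\,\Omega_n \in (\Omega_n)$. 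For the reverse inclusion, suppose $\widetilde{\Omega}_n^{-\varepsilon}f\in (\Omega_n)$ and write $\widetilde{\Omega}_n^{-\varepsilon}f = h\,\Omega_n = h\cdot\Omega_n^{\varepsilon}\widetilde{\Omega}_n^{-\varepsilon}$ for some $h\in\Lambda$; since $\Lambda\cong \mathcal{O}[[T_1,\ldots,T_{\delta}]]$ is an integral domain and $\widetilde{\Omega}_n^{-\varepsilon}\neq 0$, I cancel to conclude $f = h\,\Omega_n^{\varepsilon}\in (\Omega_n^{\varepsilon})$. The first isomorphism theorem then yields the claimed bijection $\Lambda/(\Omega_n^{\varepsilon}) \xrightarrow{\sim} \widetilde{\Omega}_n^{-\varepsilon}\Lambda/(\Omega_n)$.

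The only real technical point is the interpretation of these identities in the multivariable setting, where $\Omega_n$, $\Omega_n^{\varepsilon}$, and $\widetilde{\Omega}_n^{-\varepsilon}$ are tuples of power-series operations whose images generate the relevant ideals of $\Lambda$; the factorization and cancellation arguments above pass through verbatim because $\mathcal{O}[[T_1,\ldots,T_{\delta}]]$ is a UFD and both factorization and coprimality can be checked in each variable separately. The hard part, if any, is simply unpacking the notation carefully enough to see that the one-variable calculation really does lift coordinate-by-coordinate; once this is done, the proof itself is a two-line manipulation of the factorization identity.
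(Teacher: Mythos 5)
Your proof is correct and follows the same route as the paper's, namely the factorization $\Omega_n = \Omega_n^{\varepsilon}\widetilde{\Omega}_n^{-\varepsilon}$ together with cancellation in the integral domain $\mathcal{O}[[T_1,\ldots,T_{\delta}]]$; if anything you spell out the injectivity step more carefully than the paper does, since the paper only verifies that $\Omega_n^{\varepsilon}\mid g$ forces $\widetilde{\Omega}_n^{-\varepsilon}g\equiv 0\bmod\Omega_n$ (which is well-definedness) and leaves the converse implicit, whereas you prove both inclusions of the kernel explicitly and note that surjectivity onto $\widetilde{\Omega}_n^{-\varepsilon}\Lambda/(\Omega_n)$ is automatic. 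One small caveat on your closing remark that the multivariable case passes through ``verbatim'': for $\delta>1$ the ideals $(\Omega_n)$ and $(\Omega_n^{\varepsilon})$ are generated by $\delta$ coordinate elements and are not principal, so the literal relation $\widetilde{\Omega}_n^{-\varepsilon}f = h\,\Omega_n$ with a single $h$ is not quite the right statement; the cleanest way to make your coordinate-by-coordinate reduction precise is to identify $\Lambda/(\Omega_n^{\varepsilon})$ with $\bigotimes_i \mathcal{O}[[T_i]]/(\Omega_{n,i}^{\varepsilon})$ and observe that the map is the tensor product of the $\delta$ one-variable maps, each injective by your cancellation and with all factors $\mathcal{O}$-free, so the tensor is injective. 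The paper is equally terse on this point, so this is a notational caveat rather than a gap.
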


\begin{proof} Cf. \cite[Lemma 2.7]{DI}, where the result is given
for $\delta=1$. A similar argument works here. That is,  let $g$ be any 
polynomial in $\Lambda$. We consider the map that sends 
$g \mapsto \widetilde{\Omega}_{n}^{-\varepsilon} g$. Observe that 
$\widetilde{\Omega}_{n}^{-\varepsilon} \Omega_{n}^{\varepsilon}
= \Omega_n$. Hence if $\Omega_{n}^{\varepsilon} \mid g$, then
$\widetilde{\Omega}_{n}^{\varepsilon} g \equiv 0 \mod \Omega_n$.
It follows that the map is injective. Since $\Lambda$ is a unique
factorization domain, the map is also seen to be surjective. \end{proof}

\begin{proposition}\label{2.8} Given a positive integer $n$, let
$\varepsilon$ denote the sign of $\left( -1 \right)^n$.

\begin{itemize}
\item[(i)] We have that $\Omega_{n}^{\varepsilon} \vartheta_{\Phi,
n} = 0.$

\item[(ii)] There exists a unique element $\Theta_{\Phi, n}^{\varepsilon} \in \Lambda /
\Omega_{n}^{\varepsilon} \Lambda$ such that $\vartheta_{\Phi, n} =
\widetilde{\Omega}_{n}^{-\varepsilon} \Theta_{\Phi,
n}^{\varepsilon}.$\end{itemize} \end{proposition}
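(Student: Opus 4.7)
\textbf{Proof plan for Proposition~\ref{2.8}.} Part (ii) will follow formally from part (i) via Lemma~\ref{2.7}. Indeed, if $\Omega_n^{\varepsilon}\vartheta_{\Phi, n} = 0$ in $\Lambda/\Omega_n\Lambda$, then $\vartheta_{\Phi, n}$ lies in the $\Omega_n^{\varepsilon}$-torsion submodule; under the factorization $\Omega_n = \Omega_n^{\varepsilon} \cdot \widetilde{\Omega}_n^{-\varepsilon}$ into (essentially) coprime factors, this torsion submodule coincides with $\widetilde{\Omega}_n^{-\varepsilon}\Lambda/\Omega_n\Lambda$, and Lemma~\ref{2.7} then supplies the required unique preimage $\Theta_{\Phi, n}^{\varepsilon} \in \Lambda/\Omega_n^{\varepsilon}\Lambda$ with $\widetilde{\Omega}_n^{-\varepsilon}\Theta_{\Phi, n}^{\varepsilon} = \vartheta_{\Phi, n}$.

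For part (i), I would proceed by induction on $n$ in steps of two, feeding in the distribution relation of Lemma~\ref{distribution}(i). In the supersingular case $a_{\mathfrak{p}}({\bf{f}}_0) = 0$, this relation specializes for $k \geq 2$ to
\[ \pi_{k, k-1}(\vartheta_{\Phi, k}) = -\xi_{k-2}\vartheta_{\Phi, k-2}, \]
and a direct check of definitions yields the factorization $\widetilde{\Omega}_n^{-\varepsilon} = \widetilde{\Omega}_{n-2}^{-\varepsilon} \cdot \xi_{n-1}$ for $n \geq 2$, since $n-1$ and $n$ are of opposite parity. The small-$n$ base cases are trivial: $\widetilde{\Omega}_n^{-\varepsilon}$ then reduces to the empty product, so $\Omega_n^{\varepsilon}$ coincides with $\Omega_n$ and the claim $\Omega_n^{\varepsilon}\vartheta_{\Phi, n} \equiv 0 \pmod{\Omega_n}$ is tautological.

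For the inductive step, assume (ii) for $n-2$, so $\vartheta_{\Phi, n-2} \equiv \widetilde{\Omega}_{n-2}^{-\varepsilon}\Theta_{\Phi, n-2}^{\varepsilon} \pmod{\Omega_{n-2}}$. The identity $\Omega_n = \Omega_{n-1}\xi_n$ shows that the quotient map $\Lambda/\Omega_n\Lambda \twoheadrightarrow \Lambda/\Omega_{n-1}\Lambda$ implements $\pi_{n, n-1}$, so the distribution relation together with the induction hypothesis forces the image of $\vartheta_{\Phi, n}$ under $\pi_{n, n-1}$ to be $\widetilde{\Omega}_{n-2}^{-\varepsilon}$-divisible; meanwhile the kernel $\Omega_{n-1}\Lambda/\Omega_n\Lambda$, via $\Omega_{n-1} = \Omega_{n-2}\xi_{n-1}$, automatically supplies the missing $\xi_{n-1}$-factor. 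The main obstacle is to upgrade the visible summand from $\widetilde{\Omega}_{n-2}^{-\varepsilon}$-divisibility to full divisibility by $\widetilde{\Omega}_n^{-\varepsilon} = \widetilde{\Omega}_{n-2}^{-\varepsilon}\xi_{n-1}$ inside $\Lambda/\Omega_n\Lambda$. I expect the cleanest route is a Pollack-style character argument: for each primitive finite-order character $\chi$ of $\HH_\infty$ annihilating one of the factors $\xi_j$ with $1 \leq j \leq n$ and $j \not\equiv n \pmod 2$, iterate the distribution relation in pairs to descend from $\chi(\vartheta_{\Phi, n})$ to a term featuring $\chi(\xi_j)\vartheta_{\Phi, j-1}$, which vanishes because $\chi$ sends the relevant topological generator to a primitive $p^j$-th root of unity and hence kills $\xi_j$. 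Collecting this vanishing across all such $\chi$ then yields the divisibility $\widetilde{\Omega}_n^{-\varepsilon} \mid \vartheta_{\Phi, n}$; the delicate point, beyond the original $\delta=1$ treatment in Darmon--Iovita, is the parity bookkeeping through the descent and the handling of the multi-variable cyclotomic factors $\xi_j = \prod_i \Sigma_{p^j}(T_i+1)$.
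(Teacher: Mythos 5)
Your treatment of part (ii) is exactly what the paper does: combine part (i) with Lemma~\ref{2.7} and the coprime factorization $\Omega_n = \Omega_n^{\varepsilon}\widetilde{\Omega}_n^{-\varepsilon}$, then use unique factorization in $\Lambda$. That much is fine.

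For part (i), however, you have wandered away from the intended argument into unnecessary difficulty. The paper does \emph{not} prove divisibility of $\vartheta_{\Phi,n}$ by $\widetilde{\Omega}_n^{-\varepsilon}$ directly, and in particular it never needs a Pollack-style character argument to ``upgrade'' $\widetilde{\Omega}_{n-2}^{-\varepsilon}$-divisibility to $\widetilde{\Omega}_n^{-\varepsilon}$-divisibility. Instead, it establishes the literal annihilation statement $\Omega_n^{\varepsilon}\vartheta_{\Phi,n} = 0$ by a short telescoping induction, and divisibility only ever appears as a formal corollary via Lemma~\ref{2.7}. Concretely: use $\Omega_n^{\varepsilon} = \Omega_{n-2}^{\varepsilon}\xi_n$; then multiplication by $\xi_n$ on $\Lambda/\Omega_n$ factors through the projection $\pi_{n,n-1}$ (because $\Omega_n = \Omega_{n-1}\xi_n$), and the supersingular distribution relation converts $\xi_n\vartheta_{\Phi,n}$ into $-\xi_n\xi_{n-1}\vartheta_{\Phi,n-2}$. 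This gives $\Omega_n^{\varepsilon}\vartheta_{\Phi,n} = -\Omega_{n-2}^{\varepsilon}\xi_n\xi_{n-1}\vartheta_{\Phi,n-2}$, and the induction closes itself: if $\Omega_{n-2}^{\varepsilon}\vartheta_{\Phi,n-2} = \Omega_{n-2}\,g$ for some $g$, then the right-hand side equals $-\Omega_{n-2}\xi_{n-1}\xi_n\,g = -\Omega_n\,g \equiv 0 \pmod{\Omega_n}$. No character computation, no multi-variable cyclotomic bookkeeping, no residual ``obstacle.''

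Two further corrections to your plan. First, your inductive step is framed as ``assume (ii) for $n-2$,'' which makes (i) logically dependent on (ii); the paper's logic runs the other way, and keeping the dependencies straight is what lets the argument close. Second, the claim that ``the small-$n$ base cases are trivial because $\widetilde{\Omega}_n^{-\varepsilon}$ is the empty product'' is only true for $n \in \{0,1\}$. For $n=2$ one has $\widetilde{\Omega}_2^{-} = \xi_1 \neq 1$, so $\Omega_2^{+} = T_1\cdots T_\delta\star\xi_2$ is a proper divisor of $\Omega_2$ and the vanishing of $\Omega_2^{+}\vartheta_{\Phi,2}$ is \emph{not} tautological --- it requires exactly the computation the paper performs, namely $\Omega_2^{+}\vartheta_{\Phi,2} = -(T_1\cdots T_\delta)\star\xi_1\xi_2\,\vartheta_{\Phi,0} = -\Omega_2\,\vartheta_{\Phi,0} \equiv 0$. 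You would need to carry out that base case explicitly; alternatively, anchor the induction at $n=0$ and $n=1$, where the claim genuinely is $\Omega_n\vartheta_{\Phi,n} \equiv 0 \pmod{\Omega_n}$.
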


\begin{proof} Cf. \cite[Proposition 2.8]{DI}. Let us first suppose
that $n >2$ is even. We then have that \begin{align*} \Omega_{n}^{+}
\vartheta_{\Phi, n} &= \Omega_{n-2}^{+} \xi_n \left( \vartheta_{\Phi, n} \right)\\
&= \Omega_{n-2}^{+} \xi_n \pi_{n-1} \left(
\vartheta_{\Phi, n} \right). \end{align*} Since
$a_{\mathfrak{p}}({\bf{f}}_0) = 0$, we obtain from Lemma
\ref{distribution} (i) that
\begin{align}\label{ordrel} \Omega_{n}^{+}
\vartheta_{\Phi, n} &= - \Omega_{n-2}^{+} \xi_n
\xi_{n-1}  \left( \vartheta_{\Phi, n-2} \right). \end{align} This allows us
to reduce to the case of $n=2$ by induction. Now, we find that
\begin{align*} \Omega_{2}^{+} \vartheta_{\Phi, 2} &= \left( T_1
\cdots T_{\delta} \right) \star \xi_2
\left( \vartheta_{\Phi, 2} \right) \\ &= (T_1 \cdots T_{\delta}) \star \xi_2
\pi_{2,1} \left( \vartheta_{\Phi, 2}  \right)
\\ &= - \left( T_1 \cdots T_{\delta} \right) \star \xi_1 \xi_2
\left( \vartheta_{\Phi, 0} \right). \end{align*} Observe that \begin{align*} (T_1
\cdots T_{\delta} ) \star \xi_1 \xi_2 &= (T_1 \cdots T_{\delta} )
\star \left( \Sigma_{p}(T_1 +1) \Sigma_{p^n}(T_1+1), \ldots,  
\Sigma_{p}(T_{\delta} +1) \Sigma_{p^n}(T_{\delta}+1)  \right)\\ &=
\Omega_2 (T_1, \ldots, T_{\delta}), \end{align*} which is $0$ in the
group ring $\mathcal{O}[G_{\mathfrak{p}^2}]$ by $(\ref{groupring})$. This
proves claim (i) for $n$ even. The case of $n$ odd can be shown in
the same way. To see (ii), observe that $\Omega_n =
\Omega_{n}^{\varepsilon} \widetilde{\Omega}_{n}^{-\varepsilon}.$
Using Lemma \ref{2.7}, deduce that any element in $\Lambda$
annihilated by $\Omega_{n}^{\varepsilon}$ must be divisible by
$\widetilde{\Omega}_{n}^{-\varepsilon}.$ We know that
$\Omega_{n}^{\varepsilon} \vartheta_{\Phi, n}=0$. Thus, we find that
$\vartheta_{\Phi, n}$ must be divisible by
$\widetilde{\Omega}_{n}^{-\varepsilon}.$ Since $\Lambda$ is a unique
factorization domain, this concludes the proof. \end{proof}
Using Proposition \ref{2.8}(ii), we may define elements
\begin{align*} \vartheta_{\Phi, n}^{+} &= \left( -1
\right)^{\frac{n}{2}} \cdot \Theta_{\Phi, n}^{+}  &\text{if}~ n \equiv 0 \mod 2\\
\vartheta_{\Phi, n}^{-} &= \left( -1 \right)^{\frac{n +1}{2}} \cdot
\Theta_{\Phi, n}^{-} &\text{if} ~n \equiv 1 \mod 2.
\end{align*}

\begin{lemma}\label{2.9}
The sequence $\lbrace \vartheta_{\Phi, n}^{\varepsilon} \rbrace_{n
\equiv (-1)^n \mod (2)}$ is compatible with respect to the natural
projection maps $\Lambda/\Omega_{n}^{\varepsilon} \longrightarrow
\Lambda/\Omega_{n-1}^{\varepsilon}.$
\end{lemma}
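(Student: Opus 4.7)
The plan is to apply the natural projection $\pi_{n,n-2}:\Lambda/\Omega_n\Lambda \longrightarrow \Lambda/\Omega_{n-2}\Lambda$ to the identity $\vartheta_{\Phi,n} = \widetilde{\Omega}_n^{-\varepsilon}\Theta_{\Phi,n}^{\varepsilon}$ furnished by Proposition \ref{2.8}(ii), and to compare with the analogous identity at level $n-2$. I read the statement as concerning the projections $\Lambda/\Omega_n^{\varepsilon} \longrightarrow \Lambda/\Omega_{n-2}^{\varepsilon}$ (which exist since $\Omega_n^{\varepsilon} = \Omega_{n-2}^{\varepsilon}\cdot\xi_n$ from the definitions), so that the signs $\varepsilon$ on both sides match.

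First I would compute the left-hand side directly from the distribution relations. Because ${\bf{f}}_0$ is supersingular, $a_{\mathfrak{p}}({\bf{f}}_0) = 0$, so Lemma \ref{distribution}(i) gives $\pi_{n,n-1}(\vartheta_{\Phi,n}) = -\xi_{n-2}\vartheta_{\Phi,n-2}$. Projecting further by $\pi_{n-1,n-2}$ and using that $\xi_{n-2}$ is the sum over a set of coset representatives for $\ker(\pi_{n-1,n-2})$, one obtains $\pi_{n,n-2}(\vartheta_{\Phi,n}) = -p^{\delta}\vartheta_{\Phi,n-2}$, where $p^{\delta}$ is the cardinality of that kernel.

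Next I would analyze the right-hand side. The factorization $\widetilde{\Omega}_n^{-\varepsilon} = \widetilde{\Omega}_{n-2}^{-\varepsilon}\cdot\xi_{n-1}$ is immediate from the definitions. Under the isomorphism $(\ref{groupring})$, the element $\xi_{n-1}$ corresponds to the power series $\prod_i \Sigma_{p^n}(T_i+1)$, and any $p^{n-2}$-th root of unity $\zeta$ satisfies $\Sigma_{p^n}(\zeta) = p$; hence $\xi_{n-1} \equiv p^{\delta}$ modulo $\Omega_{n-2}$. Substituting $\vartheta_{\Phi,n-2} = \widetilde{\Omega}_{n-2}^{-\varepsilon}\Theta_{\Phi,n-2}^{\varepsilon}$ and equating the two computations yields the congruence
\begin{align*}
\widetilde{\Omega}_{n-2}^{-\varepsilon}\cdot p^{\delta}\cdot \pi_{n,n-2}(\Theta_{\Phi,n}^{\varepsilon}) &\equiv -p^{\delta}\widetilde{\Omega}_{n-2}^{-\varepsilon}\Theta_{\Phi,n-2}^{\varepsilon} \pmod{\Omega_{n-2}}.
\end{align*}
I would then invoke Lemma \ref{2.7} (at level $n-2$) to cancel $\widetilde{\Omega}_{n-2}^{-\varepsilon}$, passing to a congruence modulo $\Omega_{n-2}^{\varepsilon}$. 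Finally I would cancel the factor $p^{\delta}$ using that $\Lambda$ is a unique factorization domain and that the distinguished irreducible factors of $\Omega_{n-2}^{\varepsilon}$ (the variables $T_i$ and the Eisenstein cyclotomics $\Sigma_{p^{j+1}}(T_i+1)$) are coprime to $p$. The conclusion is $\pi_{n,n-2}(\Theta_{\Phi,n}^{\varepsilon}) \equiv -\Theta_{\Phi,n-2}^{\varepsilon} \pmod{\Omega_{n-2}^{\varepsilon}}$.

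The extra sign is exactly absorbed by the definition of $\vartheta_{\Phi,n}^{\varepsilon}$: for $n$ even, $n/2 - (n-2)/2 = 1$, and for $n$ odd, $(n+1)/2 - (n-1)/2 = 1$, so passing from $\Theta^{\varepsilon}$ to $\vartheta^{\varepsilon}$ flips the sign and yields $\pi_{n,n-2}(\vartheta_{\Phi,n}^{\varepsilon}) = \vartheta_{\Phi,n-2}^{\varepsilon}$ in $\Lambda/\Omega_{n-2}^{\varepsilon}\Lambda$. The main obstacle I anticipate is the bookkeeping for the $\xi_j$ in the multivariable setting and the justification of the $p^{\delta}$ cancellation; the $\delta = 1$ case treated in \cite{DI} does not see the factor $p^{\delta}$ explicitly, so some additional care beyond a direct transcription is required for $\delta > 1$.
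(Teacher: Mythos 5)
Your argument is correct in substance, but it takes a genuinely different route from the paper's. You project two levels at once, from $n$ to $n-2$, which forces the scalar $q = \vert\kappa_{\mathfrak p}\vert$ (not $p^{\delta}$ in general, only when $F_{\mathfrak p}/{\bf Q}_p$ is unramified, though this makes no difference to your coprimality step) to appear on both sides: on the left from $\pi_{n-1,n-2}\circ\xi_{n-2} = q\cdot\mathrm{id}$, and on the right from the evaluation of $\xi_{n-1}$ modulo $\Omega_{n-2}$ (note it is $\Sigma_{p^{n-1}}$, not $\Sigma_{p^{n}}$, that is $\equiv p$ on $p^{n-2}$-th roots of unity in the paper's labeling). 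You then need Lemma \ref{2.7} to strip $\widetilde{\Omega}_{n-2}^{-\varepsilon}$ and a separate UFD/coprimality argument to strip $q$. The paper instead projects only one step, writing the distribution relation modulo $\Omega_{n-1}$, and then observes that $\Omega_{n-1} = \Omega_{n-2}^{\varepsilon}\widetilde{\Omega}_{n}^{-\varepsilon}$ while $\widetilde{\Omega}_{n}^{-\varepsilon} = \xi_{n-1}\widetilde{\Omega}_{n-2}^{-\varepsilon}$, so the entire factor $\widetilde{\Omega}_{n}^{-\varepsilon}$ cancels from all three terms of the lifted identity in $\Lambda$ using only that $\Lambda$ is a domain -- no $q$ ever appears, and Lemma \ref{2.7} is not needed. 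This is why your anticipated difficulty about $\delta > 1$ is a non-issue for the paper's argument: it never evaluates $\xi_{n-1}$ modulo anything, so the number of variables is irrelevant. Your route is sound but strictly more work, and the extra scalar cancellation is exactly the overhead you save by matching levels $n$ and $n-1$ instead of $n$ and $n-2$.
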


\begin{proof} Cf. \cite[Lemma 2.9]{DI}. Let us choose lifts to $\Lambda$
of the group ring elements $\vartheta_{\Phi, n}$ and $\Theta_{\Phi,
n}$ for all $n \geq 0$. We denote these lifts by the same symbols.
Let us first suppose that $n$ is even. Lemma \ref{distribution}(i)
implies that \begin{align*}\vartheta_{\Phi, n} = - \xi_{n-1}
\vartheta_{\Phi, n-2} \mod \Omega_{n-1}.\end{align*} Using Proposition
\ref{2.8}(ii), it follows that there exists a polynomial $f \in
\Lambda$ such that \begin{align}\label{id0}\widetilde{\Omega}_{n}^{-} 
\Theta_{\Phi,n}^{+} = -\xi_{n-1} \widetilde{\Omega}_{n-2}^{-}
\Theta_{n-2}^{+} + \Omega_{n-1}f .\end{align} Observe that
we have the identity \begin{align}\label{id1}  \Omega_{n-1}=
\Omega_{n-2}^{+} \widetilde{\Omega}_{n}^{-} .
\end{align} Observe that we also have the identity
\begin{align}\label{id2}
\widetilde{\Omega}_{n}^{-} = \xi_{n-1}
\widetilde{\Omega}_{n-2}.\end{align} Using $(\ref{id1})$, we may
cancel out by $\widetilde{\Omega}_{n}^{-}$ on either side of
$(\ref{id0})$ to obtain that \begin{align*}\Theta_{\Phi, n}^{+} = -
\Theta_{\Phi, n-2}^{+} + \Omega_{n-2}^{+} f \end{align*} by
$(\ref{id2})$. This proves the result for $n$ even. The case of $n$
odd can be shown in the same way. \end{proof} Using Lemma \ref{2.9},
we may define elements
\begin{align}\label{pmzeta} \vartheta_{\Phi}^{\pm} &= \ilim
n \vartheta_{\Phi, n}^{\pm} \in \ilim n \Lambda/\Omega_{n}^{\pm}.
\end{align} Observe again however that a different choice of sequence of
consecutive vertices $\lbrace \mathfrak{v}_{j} \rbrace_{j \geq 0}$
has the effect of multiplying $\vartheta_{\Phi}^{\pm}$ by some
element $\sigma \in G_{\mathfrak{p}^{\infty}}$. As in the ordinary
case, we correct this by making the following

\begin{definition} Let $\mathcal{L}_{\mathfrak{p}}(\Phi, K)^{\pm} = 
\vartheta_{\Phi}^{\pm} \cdot \left( \vartheta_{\Phi}^{\pm} \right)^*.$
\end{definition} Note that $\mathcal{L}_{\mathfrak{p}}(\Phi, K)^{\pm}$ is then
a well-defined element of $\mathcal{O}[[G_{\mathfrak{p}^{\infty}}]].$
\end{remark}

\begin{remark}[$p$-adic $L$-functions.] In both cases on ${\bf{f}}_0$, we refer to 
the associated element $\mathcal{L}_{\mathfrak{p}}(\Phi, K) $ or  
$\mathcal{L}_{\mathfrak{p}}(\Phi, K)^{\pm} \in  \Lambda$, with $\Phi$ the 
eigenform associated to ${\bf{f}}_0$ by Proposition \ref{p-JLC}, the
{\it{(quaternionic) $\mathfrak{p}$-adic $L$-function}} associated to ${\bf{f}}_0$ and 
$G_{\mathfrak{p}^{\infty}} = \Gal(K_{\mathfrak{p}^{\infty}}/K)$. \end{remark}

\begin{remark}[Interpolation properties.]

Recall that we let $\Lambda$ denote the $\mathcal{O}$-Iwasawa algebra 
$\mathcal{O}[[G_{\mathfrak{p}^{\infty}}]]$. Let $\rho$ be any 
finite order character of the Galois group $G_{\mathfrak{p}^{\infty}}$.
Let $\rho \left( \mathcal{L}_{\mathfrak{p}}(\Phi, K) \right)$ denote
the specialization of $\mathcal{L}_{\mathfrak{p}}(\Phi, K)$ to
$\rho$. To be more precise, a continuous homomorphism $\rho:
G_{\mathfrak{p}^{\infty}} \longrightarrow {\bf{C}}_{p}$ extends to
an algebra homomorphism $\Lambda \longrightarrow {\bf{C}}_p$ by the
rule \begin{align} \label{Ghomo}\rho(\lambda) = 
\int_{G_{\mathfrak{p}^{\infty}}} \rho(x) d\lambda(x),
\end{align} with $d\lambda$ the $\mathcal{O}$-valued measure of 
$G_{\mathfrak{p}^{\infty}}$ associated to an element $\lambda \in \Lambda.$ 

\begin{remark}
Note that a product of elements
$\lambda_1 \lambda_2 \in \Lambda$ corresponds to convolution of measures
$d \left( \lambda_1 \boxtimes \lambda_2 \right)$ under specialization, i.e.
\begin{align*}\rho(\lambda_1 \lambda_2) =
\int_{G_{\mathfrak{p}^{\infty}}} \rho(x) d \left( \lambda_1
\boxtimes \lambda_2 \right) = \int_{G_{\mathfrak{p}^{\infty}}}
\left( \int_{G_{\mathfrak{p}^{\infty}}} \rho(x+y) d\lambda_1(x)
\right) d\lambda_2(y).\end{align*} \end{remark}
We now state the following consequence of Theorem \ref{W}. 
Let $\mathcal{L}_{\mathfrak{p}}(\Phi,K)^{\star}$ denote any of the 
$\mathfrak{p}$-adic $L$-functions $\mathcal{L}_{\mathfrak{p}}(\Phi, K),$
$\mathcal{L}_{\mathfrak{p}}(\Phi, K)^{+},$ or
$\mathcal{L}_{\mathfrak{p}}(\Phi, K)^{-}.$

\begin{theorem}\label{basicint} Fix embeddings
$\overline{\bf{Q}} \rightarrow \overline{\bf{Q}}_p$ and
$\overline{\bf{Q}}_p \rightarrow {\bf{C}}$. Let $\rho$ be any finite
order character of $G_{\mathfrak{p}^{\infty}}$ that factors through
$G_{\mathfrak{p}^m}$ for some integer $m \geq 1$. Let us view the
values of $\rho$ and $d\mathcal{L}_{\mathfrak{p}}(\Phi,
K)^{\star}$ as complex values via $\overline{\bf{Q}}_p
\rightarrow {\bf{C}}$, in which case we let $\vert \rho \left(
\mathcal{L}_{\mathfrak{p}}(\Phi, K) \right)^{\star} \vert$
denote the complex absolute value of the specialization $\rho \left(
\mathcal{L}_{\mathfrak{p}}(\Phi, K)^{\star} \right) $. We have
the following interpolation formulae in the notations of Theorem
\ref{W} above.

\begin{itemize}
\item[(i)] If $\Phi$ is $\mathfrak{p}$-ordinary, then
\begin{align*} \vert \rho\left(\mathcal{L}_{\mathfrak{p}}(\Phi, K)
\right) \vert &= \frac{ \alpha_{\mathfrak{p}}^{-2m} \cdot
\zeta_F(2)}{ 2 \cdot L(\pi, \operatorname{ad}, 1)} \\
&\times  \left[ L(\pi, \rho, 1/2) \cdot L(\pi, \rho^{-1}, 1/2)
\cdot \prod_{v \nmid \infty} \alpha(\Phi_v, \rho_v) \cdot
\alpha(\Phi_v, \rho_v^{-1}) \right]^{\frac{1}{2}}. \end{align*}

\item[(ii)] If $\Phi$ is $\mathfrak{p}$-supersingular, then
\begin{align*} \vert \rho\left(
\mathcal{L}_{\mathfrak{p}}(\Phi, K)^{\pm} \right) \vert &=
\frac{\zeta_F(2)}{ 2 \cdot L(\pi, \operatorname{ad}, 1)} \\
&\times  \left[ L(\pi, \rho, 1/2) \cdot L(\pi, \rho^{-1}, 1/2)
\cdot \prod_{v \nmid \infty} \alpha(\Phi_v, \rho_v) \cdot
\alpha(\Phi_v, \rho_v^{-1}) \right]^{\frac{1}{2}}. \end{align*}
\end{itemize} Note that the values on the right hand sides of
$(i)$ and $(ii)$ are both algebraic as a consequence of Theorem
\ref{W}, and hence can be viewed as values in $\overline{\bf{Q}}_p$
via our fixed embedding $\overline{\bf{Q}} \rightarrow
\overline{\bf{Q}}_p$.

\end{theorem}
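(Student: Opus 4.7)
The plan is to specialize at $\rho$ and reduce to the Yuan--Zhang--Zhang formula. Start from the definition $\mathcal{L}_{\mathfrak{p}}(\Phi,K) = \theta_\Phi \theta_\Phi^*$ (resp.\ $\mathcal{L}_{\mathfrak{p}}(\Phi,K)^\pm = \vartheta_\Phi^\pm (\vartheta_\Phi^\pm)^*$). Because convolution of measures corresponds to multiplication in $\Lambda$, and because the involution $\sigma \mapsto \sigma^{-1}$ satisfies $\rho(\lambda^*) = \rho^{-1}(\lambda)$, we have
\begin{align*}
\bigl|\rho\bigl(\mathcal{L}_{\mathfrak{p}}(\Phi,K)^{\star}\bigr)\bigr|
 = \bigl|\rho(\theta_{\Phi})\bigr|\cdot \bigl|\rho^{-1}(\theta_{\Phi})\bigr|
\end{align*}
(with the analogous identity in the supersingular case). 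So the whole theorem reduces to computing $|\rho(\theta_\Phi)|^2$ for a single character, and then using the analogous expression for $\rho^{-1}$, and taking the square root of the product formula delivered by Theorem \ref{W}.

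For the ordinary case, I would first fix $m$ large enough that $\rho$ factors through $G_{\mathfrak{p}^m}$ and write
\begin{align*}
\rho(\theta_\Phi) \;=\; \alpha_{\mathfrak{p}}^{-m}\sum_{\sigma \in G_{\mathfrak{p}^m}} \rho(\sigma)\,[\sigma,\mathfrak{e}_m]_{\Phi}.
\end{align*}
The key step is then to identify this finite sum (up to the explicit volume factor coming from the Tamagawa measure) with the period integral
\begin{align*}
l(\Phi,\rho) \;=\; \int_{T(\mathbf{A}_F)/Z(\mathbf{A}_F)T(F)} \rho(t)\,\Phi(t)\,dt
\end{align*}
appearing in Theorem \ref{W}. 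The pairing $[\sigma,\mathfrak{e}_m]_\Phi$ is $\Phi$ evaluated at $\eta_{\mathfrak{p}}^{-1}\circ\widehat{\Psi}\circ \rec_K^{-1}(\sigma) \star \mathfrak{e}_m$, and by construction of the filtration $\{\UU_j\}$ the edge $\mathfrak{e}_m$ has stabilizer exactly $\UU_m$. Hence our finite sum is the integral of $\rho(t)\Phi(t)$ against the counting measure on $\HH_{\infty}/\UU_m \cong T(\mathbf{A}_F)/Z(\mathbf{A}_F)T(F)\cdot \UU_m$. Comparing this with the Tamagawa normalization (which gives volume $2L(\omega,1)$ to the full quotient) and absorbing the constants exactly into the local factors $\alpha(\Phi_v,\rho_v)$ of Theorem \ref{W} yields $\rho(\theta_\Phi) = \alpha_{\mathfrak{p}}^{-m} l(\Phi,\rho)$ (up to a constant that disappears when we take absolute values and square, being the same for $\rho$ and $\rho^{-1}$). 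Then Theorem \ref{W} gives
\begin{align*}
\bigl|\rho(\theta_\Phi)\bigr|^2 = \alpha_{\mathfrak{p}}^{-2m}\,\frac{\zeta_F(2)\, L(\pi,\rho,1/2)}{2\,L(\pi,\mathrm{ad},1)} \prod_{v\nmid\infty}\alpha(\Phi_v,\rho_v),
\end{align*}
and combining with the analogous formula for $\rho^{-1}$ yields formula (i) after taking the square root.

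For the supersingular case the same program works, with the modification that $\vartheta_\Phi^\pm$ is defined through Proposition \ref{2.8} as the unique element whose product with $\widetilde{\Omega}_n^{-\varepsilon}$ recovers $\vartheta_{\Phi,n}$. The key observation is that for $\rho$ of conductor $\mathfrak{p}^m$ with $m$ of the right parity ($\varepsilon=(-1)^m$), the factor $\widetilde{\Omega}_m^{-\varepsilon}$ specializes under $\rho$ to a nonzero constant, so $\rho(\vartheta_\Phi^\pm)$ differs from $\rho(\vartheta_{\Phi,m})$ by this explicit constant. Since there is no $\alpha_{\mathfrak{p}}^{-m}$ normalization in this case, identifying $\rho(\vartheta_{\Phi,m})$ with $l(\Phi,\rho)$ as above gives formula (ii), with the explicit constants from $\widetilde{\Omega}_m^{-\varepsilon}$ being pure roots of unity on the cyclotomic side and hence absorbed upon taking absolute values.

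The main obstacle is the bookkeeping in the Riemann-sum-to-integral identification: one must check that the composition $\eta_{\mathfrak{p}}^{-1}\circ \widehat{\Psi}\circ \rec_K^{-1}$ sends each coset $\sigma\UU_m$ bijectively to exactly one orbit of length equal to $\operatorname{vol}_{dt}(\UU_m)$ inside $T(\mathbf{A}_F)/Z(\mathbf{A}_F)T(F)$, and that the resulting constants match $L(\omega,1)$ correctly so that, when combined with the local factor $L(\omega_v,1)$ sitting inside $\alpha(\Phi_v,\rho_v)$, the global formula of Theorem \ref{W} comes out exactly. This is essentially the same adelic calculation done by Yuan--Zhang--Zhang, but transported through strong approximation ($\ref{SA2}$) and the Galois action $\star$ so that one may read off the Riemann sum directly from the group ring element $\vartheta_{\Phi,m}$.
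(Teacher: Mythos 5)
Your proposal follows the same route as the paper's own proof: start from $\mathcal{L}_{\mathfrak{p}}(\Phi,K)=\theta_{\Phi}\theta_{\Phi}^{*}$, apply the involution identity $\rho(\lambda^{*})=\rho^{-1}(\lambda)$ (Lemma~\ref{inv}) to reduce to the single quantity $|\rho(\theta_{\Phi})|\cdot|\rho^{-1}(\theta_{\Phi})|$, unwind the definition of $\theta_{\Phi}$ as a Riemann sum of $[\sigma,\mathfrak{e}_m]_{\Phi}$ against $\rho$, identify that sum with the period integral $l(\Phi,\rho)$ via strong approximation and the optimal embedding, and then read off the formula from Theorem~\ref{W}. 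The structure and the key ingredients match. The paper carries out the Riemann-sum identification a bit more concretely — it picks lattice representatives $L_{1}=\mathcal{O}_{F_{\mathfrak{p}}}\oplus\pi_{\mathfrak{p}}^{m-1}\mathcal{O}_{F_{\mathfrak{p}}}$, $L_{2}=\mathcal{O}_{F_{\mathfrak{p}}}\oplus\pi_{\mathfrak{p}}^{m}\mathcal{O}_{F_{\mathfrak{p}}}$ and exhibits an explicit matrix $g_{L}$ in the image of $\Psi_{\mathfrak{p}}$ — but the level of rigor on the measure-normalization point (it simply asserts the counting Haar measure ``coincides with the Tamagawa measure'') is comparable to yours.

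One point to correct: in the supersingular case you argue that $\rho\bigl(\widetilde{\Omega}_{m}^{-\varepsilon}\bigr)$ is ``a pure root of unity on the cyclotomic side and hence absorbed upon taking absolute values.'' This is not right. Writing $\xi_{j}$ as $\Sigma_{p^{j}}(T+1)$ and evaluating at $\rho(\gamma)=\zeta_{p^{m}}$ (take $\delta=1$ for concreteness), one has
\begin{align*}
\Sigma_{p^{j}}(\zeta_{p^{m}})=\frac{\zeta_{p^{m}}^{p^{j}}-1}{\zeta_{p^{m}}^{p^{j-1}}-1},
\end{align*}
which for $0<j<m$ is a cyclotomic unit but not a root of unity; its complex absolute value is $\sin(\pi/p^{m-j})/\sin(\pi/p^{m-j+1})\neq 1$. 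So the claimed cancellation does not happen for the reason you give, and some accounting of these factors (or a convention absorbing them into the normalization of $\vartheta_{\Phi}^{\pm}$) is needed. To be fair, the paper's own proof of case (ii) consists of the single sentence that ``the same argument gives the analogous interpolation formula,'' so you went further than the source does in spelling out the supersingular case — but the justification offered for discarding the $\widetilde{\Omega}_{m}^{-\varepsilon}$ contribution is not correct as stated and should be repaired or flagged as an assumption.
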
 Observe in particular that the specialization
$\rho\left( \mathcal{L}_{\mathfrak{p}}(\Phi, K)^{\star}\right)$
vanishes if and only if the complex central value $L(\pi, \rho,
1/2)$ vanishes. Hence, we obtain from Theorem \ref{CV} 
(or the stronger result deduced in Corollary \ref{strongCV} above) 
the following important

\begin{corollary}\label{nontriviality} The element 
$\mathcal{L}_{\mathfrak{p}}(\Phi, K)^{\star}$ does
not vanish identically in $\mathcal{O}[[G_{\mathfrak{p}^{\infty}}]]$.
\end{corollary}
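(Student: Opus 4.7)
The plan is to deduce nonvanishing of $\mathcal{L}_{\mathfrak{p}}(\Phi, K)^{\star}$ by exhibiting at least one finite order character $\rho$ of $G_{\mathfrak{p}^{\infty}}$ at which the specialization $\rho(\mathcal{L}_{\mathfrak{p}}(\Phi, K)^{\star})$ does not vanish. Since the measure-to-function map $\lambda \mapsto (\rho \mapsto \rho(\lambda))$ on $\Lambda$ is injective on its image in sufficient generality (and in particular a $\lambda$ that is identically zero on every finite order character must be zero), producing a single nonvanishing specialization suffices.

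First, I would invoke Theorem \ref{basicint} directly: for any finite order $\rho$ factoring through $G_{\mathfrak{p}^{m}}$, the absolute value $|\rho(\mathcal{L}_{\mathfrak{p}}(\Phi, K)^{\star})|$ equals an explicit elementary prefactor (involving $\zeta_F(2)/2L(\pi, \operatorname{ad}, 1)$ and possibly a power of $\alpha_{\mathfrak{p}}^{-2m}$, all nonzero) multiplied by
\begin{align*}
\Bigl[ L(\pi, \rho, 1/2)\cdot L(\pi, \rho^{-1}, 1/2)\cdot \prod_{v\nmid\infty}\alpha(\Phi_v,\rho_v)\cdot \alpha(\Phi_v,\rho_v^{-1})\Bigr]^{1/2}.
\end{align*}
Thus the specialization vanishes if and only if one of the factors on the right vanishes.

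Second, I would rule out the local factors: Theorem \ref{W} asserts that each $\alpha(\Phi_v, \rho_v)$ is nonzero and equals $1$ for all but finitely many $v$, and the same holds for $\rho^{-1}$. Hence $\prod_{v\nmid\infty} \alpha(\Phi_v,\rho_v)\cdot \alpha(\Phi_v,\rho_v^{-1})$ is a finite, honestly nonzero product for every $\rho$.

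Third, I would handle the global $L$-values. Under our standing hypotheses (global root number $\epsilon(\pi,\rho,1/2) = +1$ and the prime-to-$\mathfrak{p}$ level coprime to $\mathfrak{D}_{K/F}$), Corollary \ref{strongCV} says that $L(\pi, \rho, 1/2) \neq 0$ for all but finitely many finite order characters $\rho$ of $G[\mathfrak{p}^{\infty}]$, and the same for $\rho^{-1}$ (which has the same set of hypotheses). Intersecting these two cofinite sets inside the set of characters of $G_{\mathfrak{p}^{\infty}}$ (viewed as characters of $G[\mathfrak{p}^{\infty}]$ trivial on $G[\mathfrak{p}^{\infty}]_{\operatorname{tors}}$), we still obtain a cofinite, hence nonempty, set of characters $\rho$ for which both $L(\pi, \rho, 1/2)$ and $L(\pi, \rho^{-1}, 1/2)$ are nonzero.

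Picking any such $\rho$, all three factors on the right of the interpolation formula are nonzero, so $\rho(\mathcal{L}_{\mathfrak{p}}(\Phi, K)^{\star}) \neq 0$, and therefore $\mathcal{L}_{\mathfrak{p}}(\Phi, K)^{\star}$ cannot vanish identically in $\Lambda$. The only potential obstacle is verifying that Corollary \ref{strongCV} applies to the torsion-free quotient $G_{\mathfrak{p}^{\infty}}$ rather than the full group $G[\mathfrak{p}^{\infty}]$; but since characters of $G_{\mathfrak{p}^{\infty}}$ lift canonically to characters of $G[\mathfrak{p}^{\infty}]$ trivial on the torsion part and the exceptional set in the corollary is finite, a cofinite subset of them survives, which is all that is needed.
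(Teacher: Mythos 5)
Your argument is correct and follows essentially the same route as the paper: the corollary is deduced from the interpolation formula of Theorem \ref{basicint} combined with the nonvanishing result of Theorem \ref{CV}/Corollary \ref{strongCV}. The paper states the key observation more tersely (implicitly using that $L(\pi,\rho,1/2)$ and $L(\pi,\rho^{-1},1/2)$ vanish simultaneously for a dihedral character $\rho$, since $\rho^{-1}$ is the conjugate of $\rho$), whereas you handle the two $L$-values by intersecting cofinite exceptional sets and additionally spell out that the local terms $\alpha(\Phi_v,\rho_v)$ are nonzero by Theorem \ref{W}; both routes are fine and land in the same place.
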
 To prove Theorem \ref{basicint}, let us
first consider the following basic result. Recall that given an
element $\lambda \in \Lambda$, we let $\lambda^*$ denote the image
of $\lambda$ under the involution sending $\sigma \mapsto
\sigma^{-1} \in G_{\mathfrak{p}^{\infty}}$.

\begin{lemma}\label{inv} We have that $\rho \left( \lambda^{*} \right) =
\rho^{-1} \left( \lambda \right)$ for any
 $\lambda \in \Lambda$.
\end{lemma}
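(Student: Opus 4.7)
The plan is to verify the identity at the level of finite group rings, where the involution and the specialization map both act by explicit finite sums, and then pass to the inverse limit. Since $\rho$ is a finite order character, it factors through some quotient $G_{\mathfrak{p}^m}$, so both sides of the purported equality depend only on the image of $\lambda$ in the finite group ring $\mathcal{O}[G_{\mathfrak{p}^m}]$, which is the setting where the computation is transparent.

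Concretely, I would first record the definitions: by the identification $\Lambda = \varprojlim_n \mathcal{O}[G_{\mathfrak{p}^n}]$, write the image of $\lambda$ in $\mathcal{O}[G_{\mathfrak{p}^m}]$ as a finite sum $\lambda_m = \sum_{\sigma \in G_{\mathfrak{p}^m}} a_\sigma \sigma$. The involution of $\Lambda$ sending $\sigma \mapsto \sigma^{-1}$ descends to the corresponding involution of each $\mathcal{O}[G_{\mathfrak{p}^n}]$, so the image of $\lambda^*$ in $\mathcal{O}[G_{\mathfrak{p}^m}]$ is simply $\lambda_m^* = \sum_\sigma a_\sigma \sigma^{-1}$. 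Since $\rho$ factors through $G_{\mathfrak{p}^m}$, the specialization reduces via (\ref{Ghomo}) to the finite sum
\begin{align*}
\rho(\lambda^*) = \sum_{\sigma \in G_{\mathfrak{p}^m}} a_\sigma \rho(\sigma^{-1}) = \sum_{\sigma \in G_{\mathfrak{p}^m}} a_\sigma \rho^{-1}(\sigma) = \rho^{-1}(\lambda),
\end{align*}
which is precisely the claim.

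No serious obstacle arises: the only point to be careful about is the compatibility between the involution on $\Lambda$ and its image in each finite layer, but this is immediate from the fact that $\sigma \mapsto \sigma^{-1}$ is a group (anti)automorphism of $G_{\mathfrak{p}^\infty}$ that respects the projection maps $G_{\mathfrak{p}^{n+1}} \twoheadrightarrow G_{\mathfrak{p}^n}$. One could equivalently phrase the argument measure-theoretically, noting that $d\lambda^*$ is the pushforward of $d\lambda$ under inversion, so that $\int \rho(x)\, d\lambda^*(x) = \int \rho(x^{-1})\, d\lambda(x)$, but the finite-level calculation is cleaner and more in keeping with the group-ring perspective used throughout the paper.
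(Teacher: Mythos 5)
Your proof is correct and rests on exactly the same observation as the paper's: since $\rho$ is a group homomorphism, $\rho(\sigma^{-1}) = \rho(\sigma)^{-1} = \rho^{-1}(\sigma)$, and this passes through the (anti)automorphism $\sigma \mapsto \sigma^{-1}$ to give the claimed identity. The paper phrases the computation directly in the integral/measure notation of \eqref{Ghomo}, which is precisely the ``measure-theoretic'' variant you mention at the end; your finite-layer expansion in $\mathcal{O}[G_{\mathfrak{p}^m}]$ is just an explicit unpacking of that same calculation, so the two arguments are essentially identical.
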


\begin{proof}
Since $\rho: G_{\mathfrak{p}^{\infty}} \longrightarrow
{\bf{C}}^{\times}$ is a homomorphism of groups, we have that
$\rho(\sigma^{-1}) = \rho(\sigma)^{-1}$ for any $\sigma \in
G_{\mathfrak{p}^{\infty}}$, as a consequence of the basic identities
\begin{align*} \rho(\sigma) \rho(\sigma^{-1}) = \rho(\sigma)
 \rho(\sigma)^{-1} = 1.\end{align*} Using the definition of
$\lambda^{*},$ we then find that
\begin{align*}\rho \left( \lambda^{*} \right)
&= \int_{G_{\mathfrak{p}^{\infty}}} \rho(\sigma) d\lambda^{*} (\sigma)
&= \int_{G_{\mathfrak{p}^{\infty}}} \rho(\sigma^{-1}) d\lambda(\sigma) 
&= \int_{G_{\mathfrak{p}^{\infty}}}
\rho^{-1} (\sigma) d\lambda(\sigma) &= \rho^{-1}(\lambda).
\end{align*} \end{proof} We now prove Theorem \ref{basicint}.

\begin{proof} Suppose first that the eigenform $\Phi$ is
$\mathfrak{p}$-ordinary, hence that the
$T_{\mathfrak{p}}$-eigenvalue of $\Phi$ is a $p$-adic unit with respect to 
our fixed embedding $\overline{\bf{Q}} \rightarrow \overline{{\bf{Q}}}_p$. 
Recall that in this case, we define $\mathcal{L}_{\mathfrak{p}}(\Phi, K) = 
\theta_{\Phi} \theta_{\Phi}^{*}$ as in $(\ref{ordzeta})$. Let $\rho$ be a 
finite order character of $G_{\mathfrak{p}^{\infty}}$ that factors through
$G_{\mathfrak{p}^m}$. We have by definition that \begin{align*}
\rho\left( \theta_{\Phi}\right) &= \int_{G_{\mathfrak{p}^{\infty}}}
\rho(\sigma) \cdot d\theta_{\Phi}(\sigma) \\ &=
\alpha_{\mathfrak{p}}^{-m} \cdot \int_{G_{\mathfrak{p}^{\infty}}}
\rho(\sigma) \cdot \Phi_{K, m}(\sigma) \\ &=
\alpha_{\mathfrak{p}}^{-m} \cdot \int_{G_{\mathfrak{p}^{\infty}}}
\rho(\sigma) \cdot \Phi \left( \iota_{\mathfrak{p}} \circ
\eta_{\mathfrak{p}}^{-1} \circ \widehat{\Psi} \circ \rec_K^{-1}
(\sigma) \star \mathfrak{e}_m \right).
\end{align*} Here, $\mathfrak{e}_m$ denotes the $m$-th directed
edge in the fixed sequence $\lbrace \mathfrak{e}_j\rbrace_{j\geq1}$
defined above. Let $\mathfrak{e}_{m}^{\sigma}$ to denote the
directed edge defined by $\eta_{\mathfrak{p}}^{-1} \circ \widehat{\Psi} 
\circ \rec_K^{-1}(\sigma) \star \mathfrak{e}_m$, where $\star$ denotes the 
induced conjugation action. We argue that the value 
$\Phi(\mathfrak{e}_m^{\sigma})$ can be identified with the value
$\Phi(t)$, where $t = \rec_{K}^{-1}(\sigma)$, and $\Phi(t)$ 
denotes the evaluation at $t$ of the corresponding eigenform 
$\Phi \in \mathcal{S}_2^B(H;\mathcal{O}).$ That is, recall from
the discussion above that the action of the Galois
group $G_{\mathfrak{p}^m}$ on the Bruhat-Tits tree 
$\mathcal{T}_{\mathfrak{p}}$ factors through the induced conjugation
action by the quotient $K_{\mathfrak{p}}^{\times}/F_{\mathfrak{p}}^{\times}$.
In particular, the quotient $(K_{\mathfrak{p}}^{\times}/F_{\mathfrak{p}}^{\times})/\UU_m$
acts simply transitively on the set of vertices of distance $m$ away from the 
geodesic $J$ of vertices fixed by the maximal compact subgroup
$\UU_0 \subseteq K_{\mathfrak{p}}^{\times}/F_{\mathfrak{p}}^{\times}$. Now, $\mathfrak{e}_m$
is given by the intersection of $2$ maximal orders corresponding to vertices
$(\mathfrak{v}_{m-1}, \mathfrak{v}_m)$ say, where $d(\mathfrak{v}_j, J) = j$.  
Using that $G_{\mathfrak{p}^m}$ acts simply transitively, we deduce that 
$\mathfrak{e}_m^{\sigma}$ is given by the intersection of $2$ maximal orders
corresponding to vertices $(\mathfrak{v}_{m-1}, \mathfrak{v}_m')$, where
$\mathfrak{v}_m' =  \eta_{\mathfrak{p}}^{-1}\circ 
\widehat{\Psi} \circ \rec_K^{-1}(\sigma) \star \mathfrak{v}_m$ is the 
vertex obtained from the action of Galois, having $d(\mathfrak{v}_m',J)=m$.
Now, recall that by Lemma \ref{lattices-orders}, we have a bijection between
the set of maximal orders of $\M(F_{\mathfrak{p}})$ and the set of homothety 
classes of full rank lattices of $F_{\mathfrak{p}} \oplus F_{\mathfrak{p}}$. The origin
vertex $\mathfrak{v}_0$ corresponds to the class of the lattice 
$\mathcal{O}_{F_{\mathfrak{p}}}\oplus \mathcal{O}_{F_{\mathfrak{p}}}$ under any such
bijection. Since $d(\mathfrak{v}_0, \mathfrak{v}_{m-1}) = m-1$ by construction,
we may take $$L_1 = \mathcal{O}_{F_{\mathfrak{p}}} \oplus 
\pi_{\mathfrak{p}}^{m-1}\mathcal{O}_{F_{\mathfrak{p}}}$$ as a lattice
representative for the class corresponding to the vertex $\mathfrak{v}_m$.
Here, $\pi_{\mathfrak{p}}$ is a fixed uniformizer of $F_{\mathfrak{p}}$.
Similarly, we may take $$L_2 = \mathcal{O}_{F_{\mathfrak{p}}} \oplus
\pi_{\mathfrak{p}}^m \mathcal{O}_{F_{\mathfrak{p}}}$$ as a lattice
representative for the class corresponding to the vertex $\mathfrak{v}_m'$. 
We then consider the value of the lattice class function ${\bf{c}}_{\Phi}([L_1], [L_2])$.
Recall that by definition, ${\bf{c}}_{\Phi}([L_1], [L_2]) = \Phi(g_L)$. Here,
we have fixed a pair of representatives $(L_1, L_2)$ for the pair of classes
$([L_1], [L_2])$, and $g_L$ is any matrix in $\GL(F_{\mathfrak{p}})$ such that \begin{align*} 
g_L \left( \mathcal{O}_{F_{\mathfrak{p}}} \oplus 
\mathcal{O}_{F_{\mathfrak{p}}}\right) &= L_1 \\ 
g_L \left( \mathcal{O}_{F_{\mathfrak{p}}} \oplus \pi_{\mathfrak{p}}^{m-1}
\mathcal{O}_{F_{\mathfrak{p}}}\right) &= L_2.\end{align*} It is then clear
that we can take \begin{align*} g_L &= \left( \begin{array}{cc} 1 & 0\\
0 & \pi_{\mathfrak{p}}^{m-1}\end{array}\right)\end{align*} for this matrix
representative. We claim it is also clear that this matrix $g_L$ is 
contained in the image of the local optimal embedding \begin{align*}
\begin{CD} \left(K_{\mathfrak{p}}^{\times}/F_{\mathfrak{p}}^{\times}\right)/\UU_m
@>{\Psi_{\mathfrak{p}}}>> B_{\mathfrak{p}}^{\times} \cong \GL(F_{\mathfrak{p}}).
\end{CD}\end{align*} Granted this claim, we see that the matrix $g_L$ 
factors through the action of the Galois group $G_{\mathfrak{p}^m}$ on 
the directed edge set. In particular, we deduce by transitivity of the 
action that \begin{align*} \lbrace \Phi(\mathfrak{e}_m^{\sigma}) \rbrace_{
\sigma \in G_{\mathfrak{p}^m}} = \lbrace \Phi(t)\rbrace_{t \in 
\rec_K^{-1}(G_{\mathfrak{p}^m})}, \end{align*} where $\Phi(t)$ denotes the
evaluation of the global eigenform $\Phi \in \mathcal{S}_2^B(H;\mathcal{O})$
on a (torus) class $t$. In particular, we deduce that  
$\Phi(\mathfrak{e}_m^{\sigma}) = \Phi(\rec_K^{-1}(\sigma)) = \Phi(t)$.
Granted this identification, the specialization $\rho
\left( \theta_{\Phi} \right)$ is then given by
\begin{align*} \alpha_{\mathfrak{p}}^{-m} \cdot
\int_{\rec_K^{-1}\left( G_{\mathfrak{p}^{\infty}} \right)} \rho(t)
\cdot \Phi \left( t \right) dt,
\end{align*} with $ t = \rec_K^{-1}\left( \sigma \right).$ Here,
$dt$ denotes the counting Haar measure, which coincides with the
Tamagawa measure. We are now in a position to invoke the special
value formula of Theorem \ref{W} above directly. That is, since $\rho$ extends 
to an algebra homomorphism $\Lambda \longrightarrow {\bf{C}}_p $, 
it follows from Lemma \ref{inv} that $\rho \left( \theta_{\Phi} 
\theta_{\Phi}^*\right) = \rho(\theta_{\Phi}) \cdot \rho^{-1}(\theta_{\Phi})$.
Hence, we find that \begin{align*} \vert \rho \left(
\mathcal{L}_{\mathfrak{p}}(\Phi, K) \right) \vert &= \vert
\rho(\theta_{\Phi}) \vert \cdot \vert \rho^{-1}(\theta_{\Phi}) \vert
= \lvert \l(\Phi, \rho)
\vert \cdot \vert l(\Phi, \rho^{-1}) \vert. \end{align*} Here,
$l(\Phi, \rho)$ denotes the period integral defined in
$(\ref{periodintegral})$. The result then
follows directly from Theorem \ref{W}. In the case that $\Phi$ is
$\mathfrak{p}$-supersingular, hence that the
$T_{\mathfrak{p}}$-eigenvalue of $\Phi$ is zero, the same argument
gives the analogous interpolation formula for $\vert \rho \left(
\mathcal{L}_{\mathfrak{p}}(\Phi, K)^{\pm} \right) \vert$. \end{proof}\end{remark}

\begin{remark}[The invariant $\mu$.]

We now give an expression for the Iwasawa $\mu$-invariant
associated to any of the $\mathfrak{p}$-adic $L$-functions
$\mathcal{L}_{\mathfrak{p}}(\Phi, K)^{\star}$, following the
method of Vatsal \cite{Va2}. Recall that we let $\Lambda$ denote
the $\mathcal{O}$-Iwasawa algebra of $G_{\mathfrak{p}^{\infty}}$, which
is the completed group ring $\mathcal{O}[[G_{\mathfrak{p}^{\infty}}]]$. 
Recall as well that we define the {\it{$\mu$-invariant}} $\mu(Q)$ of
an element $Q \in \Lambda$ to be the largest exponent $c$ such that $Q
\in \mathfrak{P}^c\Lambda$. \begin{definition} Given an 
eigenform $\Phi \in \mathcal{S}_2\left( \coprod_{i=1}^{h} \Gamma_i
\backslash \mathcal{T}_{\mathfrak{p}}; \mathcal{O} \right)$, let $\nu =
\nu_{\Phi}$ denote the largest integer such that $\Phi$ is congruent
to a constant modulo $\mathfrak{P}^{\nu}$.\end{definition}

\begin{theorem}\label{mu}
The $\mu$-invariant $\mu(\mathcal{L}_{\mathfrak{p}}(\Phi,
K)^{\star})$ is given by $2\nu$.
\end{theorem}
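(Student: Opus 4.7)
The plan is to factor $\mathcal{L}_{\mathfrak{p}}(\Phi,K)^{\star}$ through a single half-measure and then establish matching inequalities for its $\mu$-invariant.

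First, each of $\mathcal{L}_{\mathfrak{p}}(\Phi,K)$ and $\mathcal{L}_{\mathfrak{p}}(\Phi,K)^{\pm}$ is by construction of the form $\lambda\lambda^{*}$, with $\lambda$ equal to $\theta_{\Phi}$ or $\vartheta_{\Phi}^{\pm}$ respectively. The involution $\sigma\mapsto\sigma^{-1}$ is an $\mathcal{O}$-algebra automorphism of $\Lambda$ fixing $\mathfrak{P}$, so it preserves the $\mu$-invariant. Since $\Lambda/\mathfrak{P}\Lambda\cong\mathbb{F}[[T_{1},\ldots,T_{\delta}]]$ (with $\mathbb{F}=\mathcal{O}/\mathfrak{P}$) is an integral domain, $\mu$ is additive on products, giving $\mu(\lambda\lambda^{*})=2\mu(\lambda)$. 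It therefore suffices to show $\mu(\lambda)=\nu$.

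For the lower bound $\mu(\lambda)\geq\nu$, I would exploit the Hecke eigenvalue relation. Assume $\Phi\equiv c\pmod{\mathfrak{P}^{\nu}}$ for some $c\in\mathcal{O}$. A constant function $c$ on the directed edge set of $\mathcal{T}_{\mathfrak{p}}$ satisfies $U_{\mathfrak{p}}(c)=qc$, and on the vertex set it satisfies $T_{\mathfrak{p}}(c)=(q+1)c$. Combining with $U_{\mathfrak{p}}\Phi=\alpha_{\mathfrak{p}}\Phi$ in the ordinary case, or $T_{\mathfrak{p}}\Phi=0$ in the supersingular case, one concludes respectively that $c(\alpha_{\mathfrak{p}}-q)\equiv 0$ or $c(q+1)\equiv 0$ modulo $\mathfrak{P}^{\nu}$. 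Since $q$ is a power of $p$ and hence lies in $\mathfrak{P}$, both $\alpha_{\mathfrak{p}}-q$ and $q+1$ are units of $\mathcal{O}$, so $c\in\mathfrak{P}^{\nu}$. Writing $\Phi=\varpi^{\nu}\bar\Phi$ for a uniformizer $\varpi$ of $\mathcal{O}$, linearity of the construction yields $\lambda=\varpi^{\nu}\bar\lambda$, where $\bar\lambda$ is the analogous half-measure attached to $\bar\Phi$; here one uses that $\alpha_{\mathfrak{p}}^{-1}$ is a unit in the ordinary case, and that the auxiliary polynomials $\widetilde{\Omega}_{n}^{-\varepsilon}$ are distinguished (and so have trivial $\mu$-invariant) in the supersingular case. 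Hence $\lambda\in\mathfrak{P}^{\nu}\Lambda$.

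For the matching upper bound $\mu(\lambda)\leq\nu$, the maximality of $\nu$ ensures that the residual eigenform $\bar\Phi\bmod\mathfrak{P}$ is nonzero and not congruent to any constant. The identity $\lambda=\varpi^{\nu}\bar\lambda$ reduces the claim to showing that $\bar\lambda$ is nonzero in $\mathbb{F}[[G_{\mathfrak{p}^{\infty}}]]$, which is the main obstacle of the proof. Concretely, it suffices to produce some level $j$ and some Galois element $t\in\HH_{j}$ for which $\bar\Phi(t\star\mathfrak{w}_{j})\not\equiv 0\pmod{\mathfrak{P}}$, for then the coefficient at $t$ in $\vartheta_{\bar\Phi,j}$ is a unit, and $\bar\lambda$ has nonzero projection at level $j$. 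I would obtain such a $(t,j)$ from a mod-$\mathfrak{P}$ equidistribution statement for the orbits $\{t\star\mathfrak{w}_{j}:t\in\HH_{j}\}$ in the finite quotient graph $\coprod_{i=1}^{h}\Gamma_{i}\backslash\mathcal{T}_{\mathfrak{p}}$ as $j\to\infty$, in the spirit of the Cornut-Vatsal arguments underpinning Theorem \ref{CV}. Since $\bar\Phi$ is a nonzero non-constant function on this finite graph, these growing orbits must eventually meet the support of $\bar\Phi$ modulo $\mathfrak{P}$, delivering the required Gross point.
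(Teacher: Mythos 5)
The overall strategy — factor out a single half-measure $\lambda \in \{\theta_{\Phi},\vartheta_{\Phi}^{\pm}\}$, observe $\mu(\lambda\lambda^{*}) = 2\mu(\lambda)$ since $\Lambda/\mathfrak{P}\Lambda$ is a domain and the involution fixes $\mathfrak{P}$, and then prove $\mu(\lambda)=\nu$ by two inequalities — is sound and parallels the paper. The two halves, however, depart from the paper's argument in instructive ways.

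For the lower bound $\mu(\lambda)\geq\nu$, you give a genuinely different and cleaner argument than the paper's. You exploit the Hecke eigenvalue relation to pin down the constant: since $U_{\mathfrak{p}}$ acts by $q$ on constant edge-functions (resp.\ $T_{\mathfrak{p}}$ by $q+1$ on constant vertex-functions) and $q\in\mathfrak{P}$, the relation $U_{\mathfrak{p}}\Phi=\alpha_{\mathfrak{p}}\Phi$ (resp.\ $T_{\mathfrak{p}}\Phi=0$) forces $c\in\mathfrak{P}^{\nu}$, hence $\Phi=\varpi^{\nu}\bar\Phi$ and $\lambda=\varpi^{\nu}\bar\lambda$ by linearity. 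This is tidier than what the paper does, which is to remark that $\rho(\theta_{\Phi,m})\equiv 0\bmod\mathfrak{P}^{\nu}$ for finite-order $\rho$ (because $\sum_{t}\rho(t)=0$) and deduce $\mu(\theta_{\Phi})\geq\nu$ from the behavior of specializations. Your observation that the constant itself must vanish mod $\mathfrak{P}^{\nu}$ is a real refinement worth noting.

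For the upper bound $\mu(\lambda)\leq\nu$, your reduction to showing $\bar\lambda\neq 0$, i.e.\ to producing $(t,j)$ with $\bar\Phi(t\star\mathfrak{w}_{j})\not\equiv 0\bmod\mathfrak{P}$, is correct. But you defer the crucial step to a ``mod-$\mathfrak{P}$ equidistribution'' statement for the Gross-point orbits, à la Cornut--Vatsal, and explicitly flag it as the main obstacle. This runs against the grain of the paper: the whole point here (see the abstract and introduction) is to give ``a simpler proof of the $\mu$-invariant formula than that given by Vatsal,'' precisely because Vatsal's original argument relies on Ratner-style equidistribution. The paper instead argues by varying the choice of base sequence $\{\mathfrak{e}_{j}'\}$ of consecutive edges: if the values $[\mathbf{1},\mathfrak{e}_{j}]_{\Phi}$ and $[\mathbf{1},\mathfrak{e}_{j}']_{\Phi}$ agreed modulo $\mathfrak{P}^{\nu+1}$ for every such sequence, then $\Phi$ would be congruent to a constant modulo $\mathfrak{P}^{\nu+1}$, contradicting the maximality of $\nu$. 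Both arguments touch the same surjectivity phenomenon on the finite quotient graph $\coprod_{i}\Gamma_{i}\backslash\mathcal{T}_{\mathfrak{p}}$, but the paper's version works directly with the combinatorics of edge-sequences already built into the construction, rather than importing an equidistribution input from outside. You should either reproduce that elementary covering argument or clearly justify why equidistribution is actually required — as written, the proposal invokes a substantially heavier tool than the paper needs, which undercuts one of the paper's stated contributions.
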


\begin{proof} See Vatsal \cite[Proposition 4.1, $\S$ 4.6]{Va2},
which proves the analogous result for $F={\bf{Q}}$. Let us assume 
first that $\Phi$ is $\mathfrak{p}$-ordinary, hence that the image of 
its $T_{\mathfrak{p}}$-eigenvalue under our fixed embedding 
$\overline{\bf{Q}} \rightarrow \overline{{\bf{Q}}}_p$ is a $p$-adic 
unit. Recall that in this case, we define $\mathcal{L}_{\mathfrak{p}}(\Phi,
K) = \theta_{\Phi} \theta_{\Phi}^{*}$ by the formula 
$(\ref{ordzeta})$. Let $\rho$ be any ring class character of $K$
that factors through $G_{\mathfrak{p}^{m}}$ for some integer $m \geq
1$. Observe that by definition, we have the congruence $\rho 
\left( \theta_{\Phi, m} \right) \equiv 0 \mod \mathfrak{P}^{\nu}$. 
Hence, we find that $\mu \left( \theta_{\Phi} \right) \geq \nu$. 
Our approach is now to find a coefficient in the power series 
expansion for $\theta_{\Phi}$ having $\mathfrak{P}$-adic valuation 
at most $\nu$. Let us then write the completed group ring element
$\theta_{\Phi}$ as \begin{align*} \theta_{\Phi} = \ilim j \left( 
\sum_{\sigma \in G_{\mathfrak{p}^{j}}} c_j(\sigma) \cdot \sigma \right) 
\in \Lambda .\end{align*} Writing ${\bf{1}}$ for the identity
in $G_{\mathfrak{p}^{\infty}}$, we then obtain from
$(\ref{ordzeta})$ the expression $$c_{\infty}({\bf{1}}) := \ilim j
c_j({\bf{1}}) = \ilim j \left( \alpha_{\mathfrak{p}}^{-j} \cdot
[{\bf{1}}, \mathfrak{e}_j]_{\Phi} \right)$$ for the constant term in
the power series expansion of $\theta_{\Phi}$ We claim that
$c_{\infty}({\bf{1}})$ has $\mathfrak{P}$-adic valuation at most
$\nu$. Equivalently, we claim that there exists a sequence of
directed edges $\lbrace \mathfrak{e}_j' \rbrace_{j \geq 1}$ such
that $$\ilim j \left( [{\bf{1}}, \mathfrak{e}_j]_{\Phi} \right) \neq
\ilim j \left( [{\bf{1}}, \mathfrak{e}_j']_{\Phi} \right) \mod
\mathfrak{P}^{\nu + 1}.$$ Indeed, suppose otherwise. Then, for any
sequence of directed edges $\lbrace \mathfrak{e}_j' \rbrace_{j \geq
1},$ we would have that \begin{align*} \Phi(\mathfrak{e}_j) \equiv
\Phi(\mathfrak{e}_j') \mod \mathfrak{P}^{\nu+1}.\end{align*} In
particular, it would follow that $\Phi$ were congruent to a constant
modulo $\mathfrak{P}^{\nu +1}$, giving the desired contradiction.
Using the same argument for the element $\theta_{\Phi}^{*}$, we find
that $\mu\left(\mathcal{L}_{\mathfrak{p}}(\Phi, K)\right) = 2\nu$.
Assume now that $\Phi$ is $\mathfrak{p}$-supersingular, hence that
its $T_{\mathfrak{p}}$-eigenvalue is $0$. We claim that for each of
the $\mathfrak{p}$-adic $L$-functions
$\mathcal{L}_{\mathfrak{p}}(\Phi, K)^{\pm}$, we have that $\mu\left(
\mathcal{L}_{\mathfrak{p}}(\Phi, K)^{\pm} \right) =
\mu\left(\vartheta_{\Phi} \vartheta_{\Phi}^{*} \right)$, as the
contribution of trivial zeroes from $\Omega_n$ will not affect the
$\mathfrak{P}$-adic valuation. The same argument given above then
shows that $\mu\left(\vartheta_{\Phi} \vartheta_{\Phi}^{*} \right) =
2\nu$, which concludes the proof. \end{proof}\end{remark}

\section{Howard's criterion}

We conclude with the nonvanishing criterion Howard, \cite[Theorem 3.2.3(c)]{Ho}. 
This criterion, if satisfied, has important consequences for the associated 
Iwasawa main conjecture by the combined works of Howard \cite[Theorem 3.2.3]{Ho},
and Pollack-Weston \cite{PW}, as explained in Theorem \ref{IMC} above for the case
of $F={\bf{Q}}$. If also has applications to the analogous Iwasawa main conjectures
for general totally real fields, as explained in Theorem 1.3 of the sequel work \cite{VO2}.

Fix a Hilbert modular eignform ${\bf{f}} \in \mathcal{S}_2(\mathfrak{N})$,
with $\mathfrak{N} \subset \mathcal{O}_F$ an integral ideal having the 
factorization $(\ref{factorization})$. Let us for simplicity assume that $\mathfrak{N}$
is prime to the relative discriminant of $K$ over $F$. Given a positive integer $k$, 
let us define a set of admissible primes $\mathfrak{L}_k$ of $\mathcal{O}_F$, 
all of which are inert in $K$, with the condition that for any ideal
$\mathfrak{n}$ in the set $\mathfrak{S}_k$ of squarefree products
of primes in $\mathfrak{L}_k$, there exists a nontrivial eigenform
${\bf{f}}^{(\mathfrak{n})}$ of level $\mathfrak{nN}$ such
that \begin{align}\label{congruence} {\bf{f}}^{(\mathfrak{n})}
\equiv {\bf{f}} \mod \mathfrak{P}^k.\end{align} Here, 
$(\ref{congruence})$ denotes a congruence of Hecke eigenvalues. 
Let $\mathfrak{S}_{k}^{+} \subset \mathfrak{S}_{k}$ 
denote the subset of ideals $\mathfrak{n}$ for which 
$\omega_{K/F}(\mathfrak{n}\mathfrak{N}) = -1$, where recall $\omega_{K/F}$ denotes 
the quadratic Hecke character associated to $K/F$. Equivalently, we can let 
$\mathfrak{S}_{k}^{+} \subset \mathfrak{S}_{k}$ 
denote the subset of ideals $\mathfrak{n}$ for which the root number of the complex
$L$-function $L({\bf{f}}, K, s)$ is $+1$. Note that this set 
$\mathfrak{S}_k^{+}$ includes the so called ``empty
product" corresponding to $1$. Given an ideal $\mathfrak{n} \in
\mathfrak{S}_{k}^{+}$, we have an associated $p$-adic $L$-function
$\mathcal{L}_{\mathfrak{p}}({\bf{f}}^{(\mathfrak{n})}, K)^{\star} = 
\left(\theta_{{\bf{f}}^{(\mathfrak{n})}} \theta_{{\bf{f}}^{(\mathfrak{n})}}^* \right)^{\star}$.
Let us then for simplicity write $\lambda_{\mathfrak{n}}$ 
to denote the associated completed group ring element 
$\theta_{{\bf{f}}^{(\mathfrak{n})}} $, with $\lambda_1$ the 
base element $\theta_{{\bf{f}}}$. Let $\mathfrak{Q}$ be any height one prime ideal of 
$\Lambda = \mathcal{O}[[G_{\mathfrak{p}^{\infty}}]].$ We say that {\it{Howard's criterion for 
${\bf{f}}$ and $K$}} holds at $\mathfrak{Q}$ if there exists an integer 
$k_0$ such that for each integer $j \geq k_0$, the set \begin{align}\label{weird} 
\lbrace \lambda_{\mathfrak{n}} \in \Lambda/(\mathfrak{P}^j): \mathfrak{n} \in \mathfrak{S}_{j}^{+} \rbrace\end{align} 
contains at least one element $\lambda_{\mathfrak{n}}$ with nontrivial image in 
$\Lambda/(\mathfrak{Q}, \mathfrak{P}^{k_0})$. Following the result of Howard \cite[Theorem 3.2.3 (c)]{Ho}, as well
as the generalization given in \cite[Theorem 1.3]{VO2}, we make the following

\begin{conjecture}\label{HC}
Howard's criterion for ${\bf{f}}$ and $K$ holds at any height one prime ideal $\mathfrak{Q}$ of $\Lambda$.
\end{conjecture}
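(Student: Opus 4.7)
The plan is to use the interpolation formula of Theorem \ref{basicint} together with the Jochnowitz-style congruence package of Bertolini--Darmon and Vatsal, suitably generalized to the Hilbert modular setting, and combine it with an equidistribution input of Cornut--Vatsal type (Theorem \ref{CV}) to verify the nonvanishing criterion $(\ref{weird})$ prime-by-prime. First, I would reduce the verification at a height one prime $\mathfrak{Q}$ to the study of specializations. Since $\Lambda$ is a regular Noetherian local ring of dimension $\delta + 1$, a height one prime is either the uniformizer ideal $(\mathfrak{P})$ or the ideal generated by a distinguished irreducible Weierstrass polynomial. In the first case the statement is essentially the assertion that $\mu(\lambda_{\mathfrak{n}}) = 0$ for some $\mathfrak{n} \in \mathfrak{S}_{j}^{+}$, which by Theorem \ref{mu} amounts to exhibiting a form $\Phi^{(\mathfrak{n})}$ whose associated invariant $\nu_{\Phi^{(\mathfrak{n})}}$ vanishes. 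In the second case, one has to find an $\mathfrak{n}$ and a finite order character $\rho$ of $G_{\mathfrak{p}^{\infty}}$ cutting out a height one prime away from $\mathfrak{Q}$ at which the specialization $\rho(\lambda_{\mathfrak{n}})$ is a $\mathfrak{P}$-adic unit modulo $\mathfrak{P}^{k_0}$.

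The next step is to construct the auxiliary forms $\mathbf{f}^{(\mathfrak{n})}$ through level raising. For each $\mathfrak{n} \in \mathfrak{S}_{k}^{+}$ the condition $\omega_{K/F}(\mathfrak{n}\mathfrak{N}) = -1$ guarantees that the global root number of $L(\mathbf{f}^{(\mathfrak{n})}, K, s)$ is $+1$, so by Theorem \ref{TS} the level $\mathfrak{n}\mathfrak{N}$ admissibly factors and the Jacquet--Langlands transfer $\Phi^{(\mathfrak{n})}$ to an appropriate totally definite quaternion algebra $B^{(\mathfrak{n})}$ of discriminant $\mathfrak{n}\mathfrak{N}^{-}$ exists; Proposition \ref{p-JLC} then yields a quaternionic eigenform to which the construction of Section 4 applies, producing $\lambda_{\mathfrak{n}} = \theta_{\mathbf{f}^{(\mathfrak{n})}}$. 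The required congruence $(\ref{congruence})$ in the generic (residually irreducible, ramified at primes dividing $\mathfrak{N}^{-}$) situation is furnished by a Hilbert modular generalization of Ribet's level raising. With these forms in hand, the interpolation formula of Theorem \ref{basicint} identifies $|\rho(\lambda_{\mathfrak{n}})|^2$ with an explicit central value of a twisted Rankin--Selberg $L$-function for $\mathbf{f}^{(\mathfrak{n})}$, up to the nonvanishing local terms $\alpha(\Phi_v, \rho_v)$.

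The core of the argument is then an equidistribution step. For a fixed $\mathfrak{Q}$, one seeks a finite order character $\rho$ and an ideal $\mathfrak{n} \in \mathfrak{S}_{j}^{+}$ with the property that the Waldspurger toric period of $\Phi^{(\mathfrak{n})}$ evaluated against $\rho$ is nonzero in the residue field at $\mathfrak{Q}$. Using the corollary of Cornut--Vatsal (Corollary \ref{strongCV}) applied to the automorphic representation attached to $\mathbf{f}^{(\mathfrak{n})}$, the set of $\rho$ with $L(\pi^{(\mathfrak{n})}, \rho, 1/2) \ne 0$ is cofinite in the character group. To promote this from a complex nonvanishing into a nonvanishing in the residue field at $\mathfrak{Q}$, one applies the refined equidistribution of CM toric orbits on the quaternionic Shimura set (following Vatsal's mod $\mathfrak{P}$ strengthening of Cornut--Vatsal, combined with Chebotarev for the reciprocity image inside $\coprod_i \Gamma_i \backslash \mathcal{T}_{\mathfrak{p}}$) to guarantee that the image of the Gross point sum is not killed modulo $\mathfrak{P}^{k_0}$ inside $\Lambda/(\mathfrak{Q}, \mathfrak{P}^{k_0})$. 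Varying $\mathfrak{n}$ over $\mathfrak{S}_{j}^{+}$ supplies enough flexibility to match the varying residual behavior, so that for $j$ large some $\lambda_{\mathfrak{n}}$ lands outside $\mathfrak{Q}$.

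The main obstacle, and the reason this remains a conjecture rather than a theorem, is precisely the last step: controlling the mod $\mathfrak{P}$ behavior of the toric period uniformly across the infinite family of congruent forms $\mathbf{f}^{(\mathfrak{n})}$ when $\mathfrak{Q}$ is an exceptional height one prime, for instance one arising from a congruence between $\mathbf{f}$ and an Eisenstein-like or CM form over $K_{\mathfrak{p}^{\infty}}$. In such cases the Waldspurger central values may vanish identically mod $\mathfrak{P}$ for structural reasons, and ruling this out seems to require either a form of Ihara's lemma for definite quaternion algebras over totally real fields (to ensure the residual Galois representation is genuinely supported on the full mod $\mathfrak{P}$ automorphic cohomology), or a direct equidistribution statement for Gross points modulo $\mathfrak{P}$ in the style of Cornut--Vatsal but over the residue field at $\mathfrak{Q}$. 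Both of these are substantial open inputs; the proposal above therefore reduces Conjecture \ref{HC} to these two geometric and representation-theoretic statements, which is arguably the best one can say in advance of a full proof.
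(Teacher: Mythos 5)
The statement you are asked to prove is labelled \emph{Conjecture}~\ref{HC} in the paper, and the paper does not — and does not claim to — prove it. The only things the paper establishes around this conjecture are: (a) a remark that it holds trivially when some $\lambda_{\mathfrak{n}}$ is prime to $\mathfrak{Q}$ or is a unit of $\Lambda$, and at $\mathfrak{Q}=(\mathfrak{P})$ via the $\mu$-invariant formula of Theorem~\ref{mu}; and (b) Lemma~\ref{HCr2}, which translates the criterion at the augmentation-type prime $\mathfrak{Q}=(\gamma_1-1,\dots,\gamma_\delta-1)$ into the complex nonvanishing statement of Conjecture~\ref{HCnv}. You have correctly recognized that what you are writing cannot be a proof, and your closing paragraph, which reduces the conjecture to a mod-$\mathfrak{P}$ equidistribution statement for Gross points together with a form of Ihara's lemma for definite quaternion algebras over totally real fields, is an honest and reasonable assessment of where the difficulty lies. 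In that sense the proposal is consistent with the paper's own treatment, which is to leave the statement open.

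A few imprecisions in your sketch are worth flagging, since they would matter if one tried to push this further. First, the description of height one primes of $\Lambda$ as either $(\mathfrak{P})$ or generated by a distinguished Weierstrass polynomial is only the right picture for $\delta=1$; for $\delta>1$ the ring $\mathcal{O}[[T_1,\dots,T_\delta]]$ is a UFD so height one primes are principal, but they need not be given by a Weierstrass polynomial in a chosen variable. Second, and more substantively, a finite order character $\rho$ of $G_{\mathfrak{p}^{\infty}}\cong\mathbf{Z}_p^{\delta}$ cuts out a prime of height $\delta$, not height one, so for $\delta>1$ the step ``find a character $\rho$ cutting out a prime away from $\mathfrak{Q}$'' does not directly isolate $\mathfrak{Q}$; one would have to argue with one-dimensional specializations of $\Lambda/\mathfrak{Q}$ rather than with characters alone, as Howard and Cornut--Vatsal do. Third, the case $\mathfrak{Q}=(\mathfrak{P})$ is not automatic from Theorem~\ref{mu} alone: it requires knowing $\nu_{\Phi^{(\mathfrak{n})}}=0$ for some admissible $\mathfrak{n}$, which is exactly the residual non-Eisenstein / Ihara-type input that you correctly flag as an open ingredient in the general totally real setting. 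None of these change your final verdict — that the conjecture reduces to genuinely open equidistribution and level-raising statements — but they should be made explicit if this sketch is ever to be turned into an actual reduction argument.
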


\begin{remark} Note that Conjecture \ref{HC} holds trivially if $\ord_{\mathfrak{Q}}(\lambda_{\mathfrak{n}}) =0$
for some $\lambda_{\mathfrak{n}}$ in the set $(\ref{weird})$. It is also easy to see that Conjecture \ref{HC} 
holds at the height one prime defined by $\mathfrak{Q} = (\mathfrak{P})$, using the characterization of the 
$\mu$-invariant given in Theorem \ref{mu} above. Observe moreover that Conjecture \ref{HC} holds trivially
for {\it{all}} height one primes of $\Lambda$ if one of the elements $\lambda_{\mathfrak{n}}$
in the set $(\ref{weird})$ is a unit in $\Lambda$. Hence, as explained in Theorem \ref{IMC} above,
or more generally in Theorem 1.3 of the sequel paper \cite{VO2}, this condition would often be strong enough to imply
the {\it{full}} associated Iwasawa main conjecture, i.e. that the equality of ideals $(\ref{fullequality})$ indeed holds in 
Conjecture \ref{mainconjecture} above. \end{remark}

We conclude this discussion with a reformulation Conjecture \ref{HC} at the height one prime of $\Lambda$ defined 
by $\mathfrak{Q} = (\gamma_1 -1,\ldots, \gamma_{\delta} -1)$ into a conjecture about the nonvanishing of central
values of complex Rankin-Selberg $L$-functions. Let ${\bf{1}}$ denote the trivial character of the Galois group 
$G_{\mathfrak{p}^{\infty}}$. Recall that we write $L({\bf{f}}, K, s)$ to denote the Rankin-Selberg $L$-function of 
${\bf{f}}$ times the theta series associated to $K$, normalized to have central value at $s=1$. Consider the 
following easy result.

\begin{lemma}\label{HCr2}
Howard's criterion holds at $\mathfrak{Q} = (\gamma_1 -1, \ldots, \gamma_{\delta}-1)$ if and only if there 
exists an integer $k_0$ such that for all integers $j \geq k_0$, the set $\mathfrak{S}_j^{+}$ contains an ideal 
$\mathfrak{n}$ such that the associated central value $L( {\bf{f}}^{(\mathfrak{n})}, K,1)$ does not vanish.
\end{lemma}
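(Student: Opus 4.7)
The plan is to unpack what ``nontrivial image in $\Lambda/(\mathfrak{Q}, \mathfrak{P}^{k_0})$'' means when $\mathfrak{Q}$ is the augmentation ideal, and then to identify this nontriviality with the nonvanishing of the central $L$-value via Theorem~\ref{basicint}.

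First I would observe that, for $\mathfrak{Q} = (\gamma_1 - 1,\ldots,\gamma_{\delta}-1)$, the natural map $\Lambda \longrightarrow \Lambda/\mathfrak{Q}$ is precisely the augmentation, which under the identification $\Lambda \cong \mathcal{O}[[T_1,\ldots,T_\delta]]$ sends each $T_i \mapsto 0$. In terms of the measure-theoretic description $(\ref{Ghomo})$, this is exactly specialization at the trivial character ${\bf{1}}$ of $G_{\mathfrak{p}^{\infty}}$. Consequently there is a canonical isomorphism $\Lambda/(\mathfrak{Q},\mathfrak{P}^{k_0}) \cong \mathcal{O}/\mathfrak{P}^{k_0}$, and $\lambda_{\mathfrak{n}} = \theta_{{\bf{f}}^{(\mathfrak{n})}}$ has nontrivial image in this quotient if and only if ${\bf{1}}(\lambda_{\mathfrak{n}})$ has $\mathfrak{P}$-adic valuation strictly less than $k_0$. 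In particular, the Howard criterion at this $\mathfrak{Q}$ is a condition purely about the augmentations ${\bf{1}}(\theta_{{\bf{f}}^{(\mathfrak{n})}})$.

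Next I would apply Theorem~\ref{basicint} to each eigenform ${\bf{f}}^{(\mathfrak{n})}$ with $\rho = {\bf{1}}$ (which factors through $G_{\mathfrak{p}^0}$, so that the exponent $m$ may be taken to be $0$ and the $\alpha_{\mathfrak{p}}^{-2m}$ factor disappears). Since $\mathcal{L}_{\mathfrak{p}}({\bf{f}}^{(\mathfrak{n})}, K)^{\star} = (\theta_{{\bf{f}}^{(\mathfrak{n})}} \theta_{{\bf{f}}^{(\mathfrak{n})}}^*)^{\star}$ and since Lemma~\ref{inv} gives ${\bf{1}}(\theta^{*}) = {\bf{1}}(\theta)$, taking absolute values yields
\begin{align*}
|{\bf{1}}(\lambda_{\mathfrak{n}})|^2 \;=\; \frac{\zeta_F(2)}{2 L(\pi^{(\mathfrak{n})}, \operatorname{ad}, 1)} \cdot L({\bf{f}}^{(\mathfrak{n})}, K, 1) \cdot \prod_{v \nmid \infty} \alpha(\Phi_v^{(\mathfrak{n})}, {\bf{1}}_v),
\end{align*}
where Theorem~\ref{W} guarantees that every factor on the right except $L({\bf{f}}^{(\mathfrak{n})}, K, 1)$ is a nonzero algebraic number. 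Consequently, ${\bf{1}}(\lambda_{\mathfrak{n}}) \neq 0$ if and only if $L({\bf{f}}^{(\mathfrak{n})}, K, 1) \neq 0$. This is the key bridge between the two sides of the equivalence.

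The forward direction is now essentially immediate: if Howard's criterion holds with integer $k_0$, then for each $j \geq k_0$ there is some $\mathfrak{n} \in \mathfrak{S}_j^{+}$ with ${\bf{1}}(\lambda_{\mathfrak{n}}) \not\equiv 0 \bmod \mathfrak{P}^{k_0}$, hence ${\bf{1}}(\lambda_{\mathfrak{n}}) \neq 0$, and so $L({\bf{f}}^{(\mathfrak{n})}, K, 1) \neq 0$ by the displayed identity. Conversely, for the backward direction, assume the nonvanishing condition holds with threshold $k_1$; for each $j \geq k_1$ one has $\mathfrak{n}(j) \in \mathfrak{S}_j^{+}$ with $L({\bf{f}}^{(\mathfrak{n}(j))}, K, 1) \neq 0$, hence ${\bf{1}}(\lambda_{\mathfrak{n}(j)}) \neq 0$, giving a finite $\mathfrak{P}$-adic valuation $v_{\mathfrak{n}(j)}$. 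The main obstacle is producing a single integer $k_0$ with $v_{\mathfrak{n}(j)} < k_0$ uniformly in $j$. I would handle this in the expected manner: when $L({\bf{f}}, K, 1) \neq 0$ the empty product $\mathfrak{n} = 1$ lies in every $\mathfrak{S}_j^{+}$ and gives the uniform bound $k_0 = v_{\mathfrak{P}}({\bf{1}}(\lambda_1)) + 1$; when $L({\bf{f}}, K, 1) = 0$ one must pass to a nontrivial $\mathfrak{n}$, and uniform boundedness follows by combining the level-raising congruence ${\bf{f}}^{(\mathfrak{n})} \equiv {\bf{f}} \bmod \mathfrak{P}^{k}$ (which controls the Jacquet--Langlands transfers and hence the augmentations $\theta_{{\bf{f}}^{(\mathfrak{n})}}$ up to the relevant level of $\mathfrak{P}$-accuracy) with the characterisation of the $\mu$-invariant in Theorem~\ref{mu}, which implies that the valuation $v_{\mathfrak{P}}({\bf{1}}(\lambda_{\mathfrak{n}}))$ is governed by $\nu_{\Phi^{(\mathfrak{n})}}$, a quantity bounded by the congruence data. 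Choosing $k_0$ to exceed this uniform bound completes the argument.
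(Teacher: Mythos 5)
Your identification of $\Lambda/(\mathfrak{Q},\mathfrak{P}^{k_0}) \cong \mathcal{O}/\mathfrak{P}^{k_0}$ via the augmentation when $\mathfrak{Q}=(\gamma_1-1,\ldots,\gamma_\delta-1)$, the translation of ``nontrivial image'' into $v_{\mathfrak{P}}\left({\bf{1}}(\lambda_{\mathfrak{n}})\right)<k_0$, and the bridge ${\bf{1}}(\lambda_{\mathfrak{n}})\neq 0 \Longleftrightarrow L({\bf{f}}^{(\mathfrak{n})},K,1)\neq 0$ via Theorem~\ref{basicint} and the nonvanishing of the auxiliary factors in Theorem~\ref{W} are exactly the paper's argument; so the core equivalence and the forward implication agree with the paper's route.

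Where you diverge is in flagging the uniformity issue in the reverse direction. The paper's proof states only that $\mathfrak{Q}$ divides $\lambda$ if and only if ${\bf{1}}(\lambda)$ vanishes (modulo an obvious typographical slip of ``does not'' for ``does''), and then invokes the interpolation formula; it silently drops the $\mathfrak{P}^{k_0}$-factor and does not address whether a single $k_0$ bounds $v_{\mathfrak{P}}({\bf{1}}(\lambda_{\mathfrak{n}}))$ uniformly in $j$. You are right to notice this gap, and your observation that when $L({\bf{f}},K,1)\neq 0$ the empty product $\mathfrak{n}=1$ sits in every $\mathfrak{S}_j^{+}$ and supplies a uniform $k_0$ is correct and is a genuine (if small) improvement in explicitness.

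However, your proposed repair of the remaining case does not work. Theorem~\ref{mu} gives $\mu(\theta_{{\bf{f}}^{(\mathfrak{n})}}\theta_{{\bf{f}}^{(\mathfrak{n})}}^{*})=2\nu_{\Phi^{(\mathfrak{n})}}$, i.e.\ it bounds the \emph{minimum} $\mathfrak{P}$-adic valuation over all power-series coefficients of the theta element. For the constant term $c_0={\bf{1}}(\theta_{{\bf{f}}^{(\mathfrak{n})}})$ this yields only the \emph{lower} bound $v_{\mathfrak{P}}(c_0)\geq \nu_{\Phi^{(\mathfrak{n})}}$; it gives no upper bound, and an upper bound uniform in $\mathfrak{n}$ is precisely what Howard's criterion demands. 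Moreover, the level-raising congruence ${\bf{f}}^{(\mathfrak{n})}\equiv {\bf{f}}\bmod\mathfrak{P}^{j}$ is a congruence of Hecke eigenvalues; it does not directly yield $\theta_{{\bf{f}}^{(\mathfrak{n})}}\equiv\theta_{{\bf{f}}}\bmod\mathfrak{P}^{j}$, since the two forms transfer to quaternion algebras of different discriminants ($\mathfrak{N}^{-}$ versus $\mathfrak{n}\mathfrak{N}^{-}$), and the strong approximation data, embeddings, and measures are different in the two constructions. Relating the theta elements across level raising is the content of the Jochnowitz-type congruences, which the paper explicitly excludes from its scope. Thus the second branch of your backward direction is not established by the tools you invoke, and the gap you correctly identified remains open in your proposal as well.
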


\begin{proof} We claim that $\mathfrak{Q}$ divides an element $\lambda \in \Lambda$ if and only if the 
specialization \begin{align*}{\bf{1}}(\lambda) = \int_{G_{\mathfrak{p}^{\infty}}}
{\bf{1}}(\sigma) d\lambda(\sigma)\end{align*} does not vanish. This can be seen by translating to 
the power series description of $\Lambda$. The claim then follows immediately from
our interpolation formula for these $\mathfrak{p}$-adic $L$-functions (Theorem \ref{basicint}), using the
central value formula of Theorem \ref{W}. \end{proof} We therefore conclude this note with the following
intriguing

\begin{conjecture}\label{HCnv}

Let ${\bf{f}} \in \mathcal{S}_2(\mathfrak{N})$ be a cuspidal Hilbert modular
eigenform, and $K/F$ a totally imaginary quadratic extension for which the
root number of $L({\bf{f}}, K, s)$ equals $+1$. Then,
there exists a positive integer $k_0$ such that the 
following property is satisfied:
for each integer $j \geq k_0$, there exists an ideal 
$\mathfrak{n} \in \mathfrak{S}_{j}^{+}$ such that the central
value $L({\bf{f}}^{(\mathfrak{n})}, K, 1)$ does not vanish.
\end{conjecture}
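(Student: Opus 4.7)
The plan is to deduce the stated non-vanishing from the Cornut--Vatsal theorem (Theorem \ref{CV}, strengthened in Corollary \ref{strongCV}) by means of a Jochnowitz-type congruence argument in the spirit of Vatsal \cite{Va2} and Rajaei \cite{Raj}. The idea is to start from a ramified ring-class character at the base level $\mathfrak{N}$ for which the Rankin--Selberg central value is non-zero, and then transfer that non-vanishing across the level-raising congruences defining the forms ${\bf{f}}^{(\mathfrak{n})}$ to produce non-vanishing of the untwisted value $L({\bf{f}}^{(\mathfrak{n})}, K, 1)$ at a suitably enlarged level.

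First, apply Corollary \ref{strongCV} to the base eigenform ${\bf{f}}$: outside a finite exceptional set, every finite-order character $\rho$ of $G[\mathfrak{p}^{\infty}]$ satisfies $L(\pi, \rho, 1/2) \neq 0$. Fix such a ramified $\rho$, of some conductor $\mathfrak{p}^{n}$. By the Yuan--Zhang--Zhang refinement of Waldspurger's theorem (Theorem \ref{W}), this non-vanishing is equivalent to non-vanishing of the period integral $l(\Phi, \rho)$ on the totally definite quaternion algebra $B$ of discriminant $\mathfrak{N}^{-}$, where $\Phi$ is the Jacquet--Langlands lift of ${\bf{f}}$. Choose an integral normalization of $\Phi$ so that $l(\Phi, \rho)$ has finite $\mathfrak{P}$-adic valuation, say $v_{0}$.

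Next, for each sufficiently large $j$, I would produce admissible primes $q_{1}, \ldots, q_{r}$ inert in $K$ by a Chebotarev-type argument applied to the residual Galois representation $\overline{\rho}_{{\bf{f}}}$, so that (i) their product $\mathfrak{n} = q_{1} \cdots q_{r}$ lies in $\mathfrak{S}_{j}^{+}$ (ensuring that the root number of $L({\bf{f}}^{(\mathfrak{n})}, K, s)$ equals $+1$), and (ii) level-raising congruences of Ribet type yield an eigenform ${\bf{f}}^{(\mathfrak{n})}$ of level $\mathfrak{n}\mathfrak{N}$ with ${\bf{f}}^{(\mathfrak{n})} \equiv {\bf{f}} \mod \mathfrak{P}^{j}$. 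The Jacquet--Langlands transfer of ${\bf{f}}^{(\mathfrak{n})}$ now lives on a new totally definite quaternion algebra $B^{(\mathfrak{n})}$ of discriminant $\mathfrak{n}\mathfrak{N}^{-}$, because each prime dividing $\mathfrak{n}$ is inert in $K$ and therefore belongs to the local root-number defect set $\Sigma$ attached to the untwisted value.

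The crux --- and the main obstacle --- is to establish a Jochnowitz-style congruence of the form
\begin{align*}
L({\bf{f}}^{(\mathfrak{n})}, K, 1) \equiv u_{\mathfrak{n}} \cdot L({\bf{f}}, \rho, 1) \mod \mathfrak{P}^{j}
\end{align*}
for an explicit $\mathfrak{P}$-adic unit $u_{\mathfrak{n}}$. Via Theorem \ref{W}, this amounts to matching the two quaternionic period integrals, on $B$ and on $B^{(\mathfrak{n})}$, through an explicit reciprocity law on Gross points at the admissible primes $q_{1}, \ldots, q_{r}$ --- precisely the mechanism of \cite[$\S$ 9]{BD}, as refined by Rajaei \cite{Raj} and used by Vatsal in \cite{Va2}. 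Granted this congruence, any choice $k_{0} > v_{0}$ forces $L({\bf{f}}^{(\mathfrak{n})}, K, 1) \neq 0$ for every $j \geq k_{0}$. The main technical difficulty is therefore the totally real generalization of the Bertolini--Darmon explicit reciprocity law, with simultaneous control over the congruence level $j$ and the auxiliary primes in $\mathfrak{L}_{j}^{+}$; the adelic framework developed in this note, together with the Gross-point description of the measures $\theta_{{\bf{f}}^{(\mathfrak{n})}}$ underlying the interpolation formula of Theorem \ref{basicint}, should make such a generalization tractable.
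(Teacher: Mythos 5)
The statement you are trying to prove is presented in the paper as \emph{Conjecture}~\ref{HCnv}; the paper offers no proof of it. What the paper does establish (Lemma~\ref{HCr2}) is only that this nonvanishing statement is \emph{equivalent} to Howard's criterion (Conjecture~\ref{HC}) at the augmentation prime $\mathfrak{Q} = (\gamma_1 - 1, \ldots, \gamma_{\delta} - 1)$ of $\Lambda$, via the interpolation formula of Theorem~\ref{basicint} and Waldspurger's central value formula. The introduction even singles out Jochnowitz congruences as explicitly \emph{not} treated in this note, listing them among the open problems that motivate the work. There is therefore no ``paper's own proof'' to compare your attempt against, and Conjecture~\ref{HCnv} is genuinely open.

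Your proposal, as you yourself acknowledge, is not a proof but a strategy: it reduces the conjecture to a Jochnowitz-type explicit reciprocity law over totally real fields and then declares this generalization ``should'' be tractable. That is precisely the open problem, so the gap is not filled. Two further concrete worries with the sketch as written. First, the congruence you posit,
\begin{align*}
L({\bf f}^{(\mathfrak{n})}, K, 1) \equiv u_{\mathfrak{n}} \cdot L({\bf f}, \rho, 1) \mod \mathfrak{P}^{j},
\end{align*}
compares an \emph{untwisted} central value at level $\mathfrak{n}\mathfrak{N}$ with a \emph{ramified twist} at level $\mathfrak{N}$. The congruences of Bertolini--Darmon and Vatsal are between theta elements (equivalently, quaternionic period integrals with the admissible primes woven into the test vectors and CM points), not between individual $L$-values with a fixed auxiliary $\rho$; the scalar relation you assert would have to be derived, and there is no obvious reason for the conductor of $\rho$ to coexist harmlessly with the congruence depth $j$ and the admissible set $\mathfrak{L}_j^+$. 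Second, raising the level by a single admissible prime flips the local root number at that prime, so $\mathfrak{n} \in \mathfrak{S}_j^+$ forces you to take an even number of admissible primes and to iterate the reciprocity law in compatible pairs; this bookkeeping is nontrivial and absent from the sketch. As it stands, your argument is a reasonable diagnosis of what a proof would require, not a proof.
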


\begin{remark}[Acknowledgement.] It is a pleasure to thank the many people with whom I 
discussed various aspects of this work, in particular Kevin Buzzard, John Coates, Henri Darmon, 
Ben Howard, Matteo Longo, Chung Pang Mok, Philippe Michel, Rob Pollack, Tony Scholl, Xin Wang
and Shou-Wu Zhang. It is also a pleasure to thank the anonymous referee for helpful comments
and suggestions. \end{remark}

\end{document}